\newcommand{\argmax}{\operatornamewithlimits{argmax}}
\newcommand{\argmin}{\operatornamewithlimits{argmin}}
\DeclareMathOperator{\cl}{cl}
\theoremstyle{plain}
\newtheorem{theorem}{Theorem}[section]
\newtheorem{corollary}[theorem]{Corollary}
\newtheorem{lemma}[theorem]{Lemma}
\newtheorem{proposition}[theorem]{Proposition}
\theoremstyle{definition}
\newtheorem{definition}[theorem]{Definition}
\numberwithin{equation}{section}
\newcommand\floor[1]{\lfloor#1\rfloor}
\newcommand{\om}{\omega}
\begin{document}

\title{Minimization of a Class of Rare Event Probabilities and Buffer Probabilities of Exceedance}

\author{Amarjit Budhiraja, Shu Lu, Yang Yu and Quoc Tran-Dinh}

\maketitle
\begin{abstract}
We consider the problem of choosing design parameters to minimize the probability of an undesired rare event that is described through the average
of $n$ iid random variables. Since the probability of interest for near optimal design parameters is very small, one needs to develop suitable accelerated Monte-Carlo methods for estimating the objective function of interest. One of the challenges in the study is that simulating from exponential twists of the laws of the summands may be computationally demanding since these transformed laws may be non-standard and intractable. We consider a setting where the summands are given as a nonlinear functional of random variables that are more tractable for importance sampling in that the exponential twists of their distributions take a 
simpler form (than that for the original summands). We use techniques from Dupuis and Wang (2004,2007) to identify the appropriate Issacs equations whose subsolutions are suitable for constructing tractable importance sampling schemes. We also study the closely related problem of estimating buffered probability of exceedance and provide the first rigorous results that relate the asymptotics of buffered probability and that of the ordinary probability under a large deviation scaling.
The analogous minimization problem for buffered probability, under conditions,  can be formulated as a convex optimization problem which makes it more tractable than the original optimization problem.  Once again importance sampling methods are needed in order to estimate the objective function since the events of interest have very small (buffered) probability. We show that, under conditions, changes of measures that are asymptotically efficient (under the large deviation scaling)
for estimating ordinary probability are also asymptotically efficient for estimating the buffered probability of exceedance. We embed the constructed importance sampling scheme in suitable gradient descent/ascent algorithms for solving the optimization problems of interest. Implementation of schemes for some examples
is illustrated through computational experiments.

\noindent

\noindent \textbf{AMS 2010 subject classifications:} 90C15, 65K10, 65C05, 60F10
\ \newline

\noindent \textbf{Keywords:} Importance Sampling, Stochastic Optimization, Large Deviations, Gradient Descent, Buffered Probability.

\end{abstract}



\section{Introduction}
\label{s:intro}


Optimization problems of the form $\min_{\theta \in \Theta} \mathbb{E}\left[ F(Y,\theta) \right]$
where $\Theta \subset \mathbb{R}^d$, $Y$ is a random vector in $\mathbb{R}^m$ with distribution $\mu$, and $F:\mathbb{R}^m \times \Theta \rightarrow \mathbb{R}$ is measurable have been studied extensively. In many applications the expectation cannot be computed explicitly, and it is common to estimate it by a sample average, such as $\frac{1}{N} \sum_{j=1}^{N} F(Y^j,\theta)$ where $Y^j: j=1, \cdots, N$ are i.i.d samples of $Y$. However, if the standard deviation of $F(Y,\theta)$ is large relative to its mean, then the sample size $N$ in the simulation needs to be very large for the sample average to reliably approximate the expected value. In such situations, it is desirable to use variance reduction techniques such as importance sampling to reduce the sample size needed, by 
replacing $\{F(Y^j,\theta)\}$ with 
samples of a random vector with the same mean but a smaller variance. The basic idea of importance sampling is to consider another probability measure $\nu$ such that $\mu$ is absolutely continuous with respect to $\nu$, with $\frac{d\mu}{d\nu}$ being the Radon-Nikodym derivative. Since $$\int F(y,\theta)\frac{d\mu}{d\nu}(y) \nu(dy) = \int F(y,\theta)\mu(dy) = \mathbb{E}\left[F(Y,\theta)\right],$$
just as $\frac{1}{N} \sum_{j=1}^{N} F(Y^j,\theta)$, $\frac{1}{N}\sum_{j=1}^{N}F(\bar{Y}^j,\theta)\frac{d\mu}{d\nu}(\bar{Y}^j)$ is
an unbiased estimator of $\mathbb{E}\left[F(Y,\theta)\right]$, where $\{ \bar{Y}^j \}_{j=1,\dots,N}$ are i.i.d samples from the distribution $\nu$. 
Extensive research has been conducted to identify an alternative measure $\nu$ from which one can simulate easily and is such that the  variance of $F(\bar{Y}^1,\theta)\frac{d\mu}{d\nu}(\bar{Y}^1)$ is lower than that of $F(Y,\theta)$, see \cite{ chen1993importance, dupuis2004importance,dupuis2007subsolutions, glasserman1997counterexamples, evans2000approximating, owen2000safe} and references therein.

In this paper we focus on a situation in which the random variable $Y$ has the form of the average of i.i.d random variables
$U_i, i=1,\cdots, n$ in $\mathbb{R}^m$. For each $i=1,\cdots, n$ the random variable $U_i$ is given as
$
U_i=G(X_i,\theta)
$
where $X_i, i=1,\cdots, n$ are i.i.d random variables in $\mathbb{R}^h$ and $G: \mathbb{R}^h \times \Theta \to \mathbb{R}^m$ is a continuous function.
Using $n$ as the subscript we write $Y_n= \frac{1}{n} \sum_{i=1}^{n}U_i$. We are interested in the following minimization problem
\begin{equation}\label{q:mainproblem}
\min_{\theta \in \Theta} \mathbb{E}\exp\{-nF(Y_n)\}= \min_{\theta \in \Theta}\mathbb{E}\exp\left\{-nF\bigg(\frac{1}{n} \sum_{i=1}^{n}G(X_i,\theta)\bigg)\right\}
\end{equation}
where $F:\mathbb{R}^m \rightarrow \mathbb{R}\cup \{\infty\}$ is a  measurable function. Here we use $Y_n$ as a shorthand for the complete notation $Y_n(\omega,\theta)$  for simplicity. Although not studied here, one can also consider in an analogous setting where $F$ is a function of $(y, \theta)$, namely a function on $\mathbb{R}^m \times \Theta$.


The formulation \eqref{q:mainproblem} includes a special case in which $F(y)= \infty 1_{A^c}(y)$, where $1_{A^c}$ is the indicator function of a measurable set $A^c\subset \mathbb{R}^m$, that takes the value of $\infty$ when $y\in A^c$ and $0$ otherwise (by convention $\infty \cdot 0 =0$). In this case \eqref{q:mainproblem} becomes
\begin{align}
\min_{\theta\in \Theta} \mathbb{P}\left(Y_n \in A\right) = \min_{\theta\in \Theta} \mathbb{P}\left(  \frac{1}{n}\sum_{i=1}^{n}G(X_i,\theta) \in A \right).
\label{q:min_probability}
\end{align}

In many applications in engineering, finance, and insurance, decisions need to be made to reduce the probability for an undesirable event (such as system breakdown) to occur. Such an event is often the result of the accumulative effects of a large number of individual events over a long period, which we model  as $\{Y_n\in A\}$, with $n$ being a fixed large number. Under conditions, for values of $\theta$ such that $E[U_1]\not\in \cl A$, $\mathbb{P}\left(Y_n \in A\right)$ converges to 0 exponentially fast as $n\to\infty$ by the theory of large deviations, so its value is extremely small for large $n$, making it very difficult to estimate using i.i.d samples of $Y_n$.

An effective way to estimate the probabilities of such rare events and expected values of more general risk sensitive functionals as on the right side of \eqref{q:mainproblem}
is using importance sampling techniques based on large deviations theory.
Large deviation based importance sampling techniques were introduced in Siegmund \cite{siegmund1976importance} in estimating the error probabilities of the sequential probability ratio test. Subsequent papers exhibited  the good performance of specific estimators developed using this technique, see \cite{bucklew1990large, collamore2002importance, sadowsky1991large}. However, such estimators can perform poorly as shown in Glasserman and Wang \cite{glasserman1997counterexamples}, if the necessary and sufficient conditions for effective
variance reduction in \cite{chen1993importance,sadowsky1990large,sadowsky1996monte} are violated. In order to address this, later papers introduced adaptive importance sampling schemes that are more generally applicable. Among these, the papers of Dupuis and Wang \cite{dupuis2004importance, dupuis2007subsolutions} are most related to our work. The paper \cite{dupuis2004importance} connects the problem of constructing asymptotically efficient adaptive (feedback)
importance sampling schemes with certain deterministic dynamic games. The second paper \cite{dupuis2007subsolutions} uses  subsolutions to the Isaacs equations associated with such games to construct  flexible and simple dynamic importance sampling schemes that achieve asymptotic efficiency. \par


For a direct application of the  importance sampling techniques from \cite{dupuis2004importance, dupuis2007subsolutions} to the situation here, one would need to use a parametric family of exponential changes of measure to generate the replacements for the $U_i$ given each fixed $\theta$. 
Such a scheme is easy to implement when the distribution of $U_i$ is of a simple form. For example if $U_i$ is a normal random variable then an exponential change of measure is also a normal distribution with a shifted mean. However, for more general distributions and when the dimension $m$ is large, sampling from the exponential tilt distribution can be computationally expensive (see discussion at the end of Section \ref{ss:change_on_U}). This problem gets much more severe in the optimization problem we study, in which estimates for the objective function need to be computed for many different values of $\theta$. By writing $U_i=G(X_i,\theta)$, we aim to capture the complexity of the distribution of $U_i$ through the function $G$ and leave the distribution of $X_i$ in a fixed simple form. In particular, we are interested in a setting where simulating from exponential tilts of distributions of $X_i$ is simpler than that from exponential tilts 
of $U_i$. In this work we develop an importance sampling technique based on a change of measure on the distribution of $X_i$, which 
is computationally much less demanding
compared to a scheme that uses a change of measure directly based on $U_i$. The scheme is inspired by \cite{dupuis2004importance, dupuis2007subsolutions} and,
as in these papers, is guided by the Issacs equation of a certain dynamic game. The Issacs equation is given in terms of a different Hamiltonian (see \eqref{eq:eq347}) than the one that arises in the formulation where the change of measure is done directly on the sequence $\{U_i\}$ (see \eqref{eq:eq828}). We show that generalized subsolutions of this Issacs equation can be used to construct importance sampling algorithms, with guaranteed lower bounds on asymptotic performance (as measured by the asymptotic exponential decay rate of the second moment), that are based on dynamic change of measure for the sequence $\{X_i\}$.
Similar to \cite{dupuis2004importance, dupuis2007subsolutions}, the decay rate is governed by the initial value of the subsolution (i.e. at $(t,x)=(0,0)$), with larger initial values implying a higher decay rate.

Next, we embed this importance sampling procedure in a gradient descent method to find the optimal $\theta$ for \eqref{q:mainproblem}.
Solution properties of \eqref{q:mainproblem} can be studied by investigating its limiting behavior as $n\to \infty$. It can be shown (see Theorem
\ref{t:limiting}) that under certain regularity conditions $$-\frac{1}{n} \log \exp\left\{-nF\bigg(\frac{1}{n} \sum_{i=1}^{n}G(X_i,\theta)\bigg)\right\}$$
 converges to a limiting function.
The optimal solution and optimal value of the limiting problem, when available, can be used as approximations of those of the original problem in which $n$ is a fixed large number.

Solving \eqref{q:min_probability} using a gradient descent method would require an estimate of the gradient of its objective function at each iteration. Although the objective function is continuously differentiable with respect to $\theta$ under some conditions, its gradient cannot be estimated
by the derivative of its sample average approximation function, because the sample average approximation is a piecewise constant function. Thus, instead of working with \eqref{q:min_probability} directly we will
use a surrogate reliability measure obtained by
 an approximation of  $\textbf{1}_A(\cdot)$ by a differentiable function, and apply the importance sampling methods to the expected values of the resulting risk sensitive functional.

The problem \eqref{q:min_probability} or its smooth approximation will not be convex in general, so the gradient descent algorithm will not distinguish local solutions from global solutions. For the case $m=1$ and $A=[c,\infty)$, there is an alternative reliability measure called the buffered failure probability \cite{rockafellar2010buffered} or the buffered probability of exceedance \cite{mafusalov2015estimation}.  Under mild conditions, minimization of the buffered probability over a class of probability distributions can be transformed into a convex optimization problem and is therefore more tractable.
The buffered probability is always greater than or equal to the corresponding probability, and the two values are often close to each other when the probability of the random variable of interest taking on large values is small (see e.g. \cite{rockafellar2010buffered} for a discussion of this point).
In this work we make the second statement precise in one particular setting. Specifically, we show that under conditions, probabilities of the form on the right side of \eqref{q:min_probability}
have the same exponential decay rate, as $n\to \infty$, as the corresponding buffered failure probabilities (see Theorem \ref{buff-ldp}). 
To the best of our knowledge this is the first rigorous result that relates the asymptotics of a buffered failure probability and ordinary probability under a large deviation scaling.
This result in particular suggests that the importance sampling change of measure that are appropriate for estimating the probability on the right side of \eqref{q:min_probability} should also be suitable for constructing estimators for the corresponding buffered failure probability.
Under appropriate conditions, this is indeed the case as is shown in Theorem \ref{prop:asympoptbp} and Theorem \ref{thm:main2}. One can view the buffered failure probability as a reliability measure that is of independent interest or, in view of its closeness to the ordinary exceedance probability, the solution to the buffered probability minimization problem can be used as an intermediate step
for selecting the initial point in the algorithm for the probability minimization problem.

 Comprehensive overviews on probability optimization and optimization under probabilistic (chance) constraints can be found in \cite{prekopa2013stochastic} and \cite[Chapter 4]{shapiro2009lectures}. In addition to  its direct practical applications, probability optimization is also commonly used to find initial feasible solutions for chance-constrained optimization \cite{prekopa2013stochastic}. Various methods for solving chance-constrained optimization have been proposed, including regularization methods based on approximations of level sets of the probability function \cite{dentcheva2013regularization}, the scenario approach replacing the chance constraints by finitely many sampling of the constraints \cite{calafiore2006scenario},  the sample average approximation (SAA) formulation by mixed integer programming \cite{pagnoncelli2009sample}, and convex analytical approximations of chance constraints \cite{nemirovski2006convex}. In the case of Gaussian or alternative distributions, one can also
 compute values and gradients of the probability function directly using methods such as spheric-radial decomposition \cite{van2014gradient,bremer2015probabilistic}. When the chance constraints involve the probability of a rare event, importance sampling techniques can be combined with the SAA approach to reduce the required sample size, by exploiting the structure of the problem under study to reduce the sample estimation variance uniformly with respect to the decision variables \cite{barrera2016chance}.

\par

The paper is organized as follows. Section 2 reviews importance sampling techniques that are based on  large deviation analyses and proposes a new importance sampling scheme that is based on changes of laws of the sequence $\{X_i\}$ rather than directly transforming the probability laws of the sequence $\{U_i\}$.
This section also provides an asymptotic bound on the second moment of the importance sampling estimator. Section 3 studies the limiting behavior of the problem \eqref{q:mainproblem} as $n\to \infty$, as well as convergence properties of the approximation problem for \eqref{q:min_probability} in which probabilities are replaced by expected values of certain risk sensitive functionals.
Section 4 studies the buffered probability in the present setting and its estimation using importance sampling methods. Section 5 presents the optimization algorithm and uses several numerical examples to illustrate the method. Throughout the paper, $\mathscr{P}(\mathbb{R}^h)$ denotes the space of all probability measures on $\mathbb{R}^h$.

\section{Importance sampling based on large deviations analysis}
\label{s:is}

In this section, we discuss how to estimate the objective value of \eqref{q:mainproblem} for a fixed value of $\theta$ by using importance sampling. Since $\theta$ is fixed, we suppress it in this section to reduce notational burden and consider the estimation of
\begin{equation}\label{q:mainproblem_fixedtheta}
\mathbb{E}\exp\{-nF(Y_n)\},
\end{equation}
where  $Y_n = \frac{1}{n}\sum_{i=1}^{n}U_i$ is the average of iid random variables $U_i= G(X_i)$ for $i=1,\cdots, n$. The function $G: \mathbb{R}^h  \to \mathbb{R}^m$ is continuous, and $F:\mathbb{R}^m \rightarrow \mathbb{R}\cup \{\infty\}$ is measurable. Let $\eta$ be the distribution of $X_1$ and $\xi$ be the distribution of $U_1$.

If the distribution of the random variable $Y_n$ takes a simple form, then one may  consider a change of measure with respect to the distribution of $Y_n$ directly. However, by its definition, the distribution of $Y_n$ is in general rather complicated and so one needs to construct the change of measure through the underlying distributions of $U_i$. 
Even in situations where the distribution of $Y_n$ is of simple form, e.g. Gaussian, it may be advantageous to construct a change of measure
that exploits the form of $Y_n$ and transforms the distributions of summands $U_i$ in a systematic manner.
Section 3.1 below reviews the estimation methods from \cite{dupuis2004importance,dupuis2007subsolutions} that construct a dynamic change of measure on the distributions of $\{U_i\}$ and provide results characterizing the asymptotic  performance of the resulting estimator.
One of the challenges in implementing these methods is that
 even if the distribution $\eta$ of $X_i$ were of a simple form, for a general $G$
 the distribution of $U_i$ may be rather complicated, so sampling from the exponential twists of the distribution of $U_i$ may become hard.
 In Section 3.2 we provide an alternative approach that constructs an estimator using a dynamic  change of measure with respect to the distributions of $X_i$, and establish an asymptotic bound on the second moment for the resulting importance sampling estimator.

In either approach, the replacement random variables will in general not be iid, and the conditional distribution of the $j$th random variable given the previous $j-1$ variables is related to the original distribution by an exponential tilt, i.e., the Radon-Nikodym derivative of the replacement measure with respect to the original measure is an exponential function with a linear exponent (see e.g. \eqref{eq:eq813}).
Parameters for these exponents are chosen based on solutions of certain partial differentiable equations. These equations arise when one considers the problem of minimizing the second moment as a certain stochastic control problem and studies the associated dynamic programming equations. The asymptotic performance of the resulting change of measure is established using methods from the theory of large deviations.

The starting point of the analysis are the logarithms of moment generating functions of the original random variables.  For $(a,\alpha)\in \mathbb{R}^{h+m}$, we define
\begin{equation}\label{q:def_H}
 H(a,\alpha) = \log\mathbb{E}\left[ e^{\langle a, X_1 \rangle + \langle \alpha, G(X_1) \rangle} \right].
\end{equation}
We also consider functions $H_1: \mathbb{R}^h \to \mathbb{R}$ and $H_2: \mathbb{R}^m \to \mathbb{R}$ as
\begin{equation}\label{q:def_H1}
H_1(a) = H(a,0), \ a \in \mathbb{R}^h
\end{equation}
and
\begin{equation}\label{q:def_H2}
H_2(\alpha) = H(0,\alpha), \ \alpha \in \mathbb{R}^m.
\end{equation}
Thus, $H_1$ is the  log-moment generating function of $X_1$ and $H_2$ is that of $U_1=G(X_1)$.

\subsection{The exponential change of measure on variables $U_i$}\label{ss:change_on_U}

In this subsection we review results from \cite{dupuis2004importance,dupuis2007subsolutions}.  Assume $H_2(\alpha) < \infty$ for all $\alpha \in \mathbb{R}^m$. We will replace the original random variables $U_1, \cdots, U_n$ by new random variables $\bar{U}^n_1, \cdots, \bar{U}^n_n$, that have (conditional) distributions of the form
\begin{equation}\label{eq:eq813}
e^{\langle \alpha, u \rangle - H_{2}(\alpha)}\xi(du)
\end{equation}
where $\alpha \in \mathbb{R}^m$ and $\xi$ is the distribution of $U_1$. In general, the parameter $\alpha$ that defines the sampling distribution does not need to be a constant, and can depend on values of summands that precede the current variable. Formally, suppose a function $\bar{\alpha}(x,t): \mathbb{R}^m \times [0,1] \to \mathbb{R}^m$ is given. To construct
a  dynamic  change of measure based on $\bar \alpha$ one proceeds as follows.  Suppose $\bar{U}^n_1,\cdots, \bar{U}^n_j$ have been simulated. Define
\begin{equation}\label{eq:eq817}
\bar{Y}^n_{j}= \frac{1}{n}\sum_{i=1}^j \bar{U}^n_i
\end{equation}
and simulate $\bar{U}^n_{j+1}$ from the distribution
\begin{equation}\label{q:tilted_U}
e^{\langle \bar{\alpha}(\bar{Y}^n_j,j/n), u \rangle - H_{2}(\bar{\alpha}(\bar{Y}^n_j,j/n))}\xi(du).
 \end{equation}
Thus the conditional distribution of $\bar{U}^n_{j+1}$ given $\{\bar{Y}^n_{i}, i=1, \ldots j\}$ is given by
\eqref{q:tilted_U}. Through this recursive procedure we obtain $\{\bar U^n_j\}_{1\le j \le n}$ and
$\{\bar Y^n_j\}_{1\le j \le n}$. It can be checked using a successive conditioning argument that
\begin{equation}\label{q:Zn}
Z^n= e^{-nF(\bar{Y}^n_n)}\prod_{j=0}^{n-1}e^{-\langle \bar{\alpha}(\bar{Y}^n_j,j/n),\bar{U}^n_{j+1} \rangle + H_2(\bar{\alpha}(\bar{Y}^n_j,j/n))}
\end{equation}
is an unbiased estimator for \eqref{q:mainproblem_fixedtheta}, and the above product of  exponentials is the Radon-Nikodym derivative of the distribution of $(U_1,\cdots, U_n)$ with respect to that of $(\bar{U}^n_1, \cdots, \bar{U}^n_n)$.

If the function $\bar{\alpha}$ is a constant, then the above scheme reduces to a static change of measure in which $(\bar{U}^n_1, \cdots, \bar{U}^n_n)$ are iid. Different choices of the function $\bar{\alpha}$ will produce different distributions for $Z^n$. In order to reduce the number of samples needed to the greatest extent, the idea is to choose $\bar{\alpha}$ in a way to minimize the variance (or equivalently the second moment ) of $Z^n$. It is hard to characterize the optimal choice of $\bar{\alpha}$ for a fixed value of $n$, as the distribution of $Y_n$ is rather complicated. However, as $n\to \infty$ the  (tails of the) distribution of $Y_n$ can be described using large deviations theory, which leads to a characterization of an asymptotically optimal choice of $\bar\alpha$ in terms of the solution of a partial differential equation known as the Isaacs equation\cite{dupuis2004importance}. We now introduce this equation. Let $L_2$ be the Legendre transform of $H_2$ defined as
\begin{equation}\label{q:def_L2}
L_2(\beta) = \sup_{\alpha \in \mathbb{R}^m} \left( \langle \alpha,\beta \rangle -H_2(\alpha) \right), \ \beta \in \mathbb{R}^m.
\end{equation}
It is possible that $L_2(\beta)=\infty$ for some $\beta$.
Define $\mathbb{H}_2: \mathbb{R}^{3m} \to \mathbb{R}\cup \{\infty\}$ as
\begin{equation}\label{eq:eq828}
	\mathbb{H}_2(s;\alpha,\beta) = \langle s,\beta \rangle + L_2(\beta) + \langle \alpha,\beta \rangle - H_2(\alpha).
\end{equation}
The Isaacs equation  is then given as
\begin{equation}
  W_t(y,t) + \sup_{\alpha\in\mathbb{R}^m}\inf_{\beta\in \mathbb{R}^m} \mathbb{H}_2(DW(y,t);\alpha,\beta)=0
  \label{q:isaccs}
\end{equation}
where $W: \mathbb{R}^m \times [0,1]\to \mathbb{R}$ is a continuously differentiable function, $W_t(y,t)$ is its derivative w.r.t. $t$, and $DW(y,t)$ is its derivative w.r.t. $y$. If $W$ satisfies
\begin{equation}
  W_t(y,t) + \sup_{\alpha\in\mathbb{R}^m}\inf_{\beta\in \mathbb{R}^m} \mathbb{H}_2(DW(y,t);\alpha,\beta) \ge 0
  \label{q:isaccs_sub}
\end{equation}
instead of \eqref{q:isaccs} then it is a (classical) subsolution to \eqref{q:isaccs}. If such a subsolution $W$ also satisfies the terminal condition $W(y,1)\le 2F(y)$ for all $y\in \mathbb{R}^m$, then, as is shown in \cite{dupuis2004importance,dupuis2007subsolutions}, the dynamic  change of measure as in \eqref{q:tilted_U}, constructed using the  supremizer $\alpha(y,t)$ for
the second term in \eqref{q:isaccs_sub},  produces an estimator $Z^n$ as in \eqref{q:Zn} (with $\bar \alpha$ replaced by $\alpha$) whose second moment decays exponentially at rate $W(0,0)$:
\begin{equation}\label{q:decayrate}
\liminf_{n\rightarrow\infty}-\frac{1}{n}\log\mathbb{E}[(Z^n)^2]\geq W(0,0).
\end{equation}
On the other hand, under standard conditions, the limit
\begin{equation}\label{q:gamma}
\gamma = \lim_{n\rightarrow\infty}-\frac{1}{n}\log\mathbb{E}\exp\{-nF(Y_n)\}
\end{equation}
exists \cite{dupuis2011weak}. By Jensen's inequality, if $\tilde Z^n$ is any unbiased estimator of \eqref{q:mainproblem_fixedtheta}
\[
\limsup_{n\rightarrow\infty}-\frac{1}{n}\log\mathbb{E}[(\tilde Z^n)^2] \le \limsup_{n\rightarrow\infty}-\frac{1}{n}\log (\mathbb{E}[\tilde Z^n])^2 = 2\gamma,
\]
so $2\gamma$ is the largest decay rate for the second moment among all unbiased estimators $\tilde Z^n$ of \eqref{q:mainproblem_fixedtheta}. In certain situations, one can find  a subsolution $W$ with $W(0,0) = 2\gamma$, in which case it follows from \eqref{q:decayrate} that
the importance sampling estimator  $Z^n$ in \eqref{q:Zn} constructed from the supermizer $\alpha$ in \eqref{q:isaccs} is {\em asymptotically efficient}.

In many examples one needs more than one subsolution in order to construct an importance sampling estimator that achieves asymptotic efficiency. This leads to the following notion of a generalized subsolution/control\cite{dupuis2007subsolutions}.
Let $K\ge 1$ and for $1\le k\le K$,  $\bar{\alpha}_k: \mathbb{R}^m \times [0,1] \to \mathbb{R}^m$. One of these $K$ functions is   randomly selected at each step to determine the change of measure for the summand at the given step
and the likelihood of a particular selection is determined by a probability vector valued function $\{\rho_k\}_{k=1}^K$, where $\rho_k: \mathbb{R}^m \times [0,1] \rightarrow [0,1]$. The collection $(\bar \alpha_k,
 \rho_k)$ is referred to as a generalized control pair. A precise definition is as follows.
\begin{definition}\label{def_gen_sub}
	Given $K \in \mathbb{N}$, consider functions $\bar{W}:\mathbb{R}^m\times [0,1] \rightarrow \mathbb{R}$, $\rho_k: \mathbb{R}^m \times [0,1] \rightarrow \mathbb{R}$, $\bar{\alpha}_k: \mathbb{R}^m \times [0,1] \rightarrow \mathbb{R}^m$, $1 \leq k \leq K$. The collection  $(\bar{W},\rho_k,\bar{\alpha}_k)$ is called a generalized subsolution/control to the Isaacs equation (\ref{q:isaccs}), and
	$(\bar \alpha_k,
	 \rho_k)$ the corresponding generalized control pair, if the following conditions hold: (i) For all $(y,t)$, $\{\rho_k(y,t)\}$ is a probability vector, i.e.,
	$$\rho_k(y,t) \ge 0, 1\le k \le K, \text{ and }
\sum_{k=1}^{K}\rho_k(y,t)=1 \text{ for all }(y,t)\in \mathbb{R}^m\times [0,1].$$
(ii) $\bar W$ is continuously differentiable and	$\bar{W}_t$ and $D\bar{W}$ have representations
	$$\bar{W}_t(y,t)=\sum_{k=1}^{K}\rho_k(y,t)r_k(y,t), \ D\bar{W}(y,t)=\sum_{k=1}^{K}\rho_k(y,t)s_k(y,t).$$
(iii) For each $k=1,\dots,K$,
	\begin{equation}\label{q:def_gen_sub}
	r_k(y,t) + \inf_{\beta\in \mathbb{R}^m} \mathbb{H}_2 (s_k(y,t);\bar{\alpha}_k(y,t),\beta) \geq 0.
	\end{equation}
(iv) The functions $(r_k,s_k,\rho_k,\bar{\alpha}_k)$ are uniformly bounded and continuous.
\end{definition}

With a generalized subsolution/control $(\bar{W},\rho_k,\bar{\alpha}_k)$ in hand, one can construct a dynamic change of measure as follows. Let $\bar{Y}^n_0=0$. For $j=0,\dots,n-1$, having constructed $\{\bar U^n_i\}_{1\le i\le j}$ and $\{\bar Y^n_i\}_{1\le i\le j}$,
we  generate a multinomial random variable $I$ such that $\mathbb{P}[I=k] = \rho_k(\bar{Y}^n_j,j/n)$ for $k\in\{1,2,\dots,K\}$.

Next, we simulate $\bar{U}^n_{j+1}$ from the distribution
\begin{equation}\label{q:exp_change}
e^{\langle \bar{\alpha}_I(\bar{Y}^n_j,j/n), u \rangle - H_{2}(\bar{\alpha}_I(\bar{Y}^n_j,j/n))}\xi(du),
\end{equation}
namely the conditional distribution of $\bar{U}^n_{j+1}$ given $\{\bar U^n_i\}_{i\le j}$ and $I$ is given by \eqref{q:exp_change}.
 Define
$\bar{Y}^n_{j+1}= \bar{Y}^n_j + \frac{1}{n}\bar{U}^n_{j+1}$.
It follows from a simple calculation (see \cite{dupuis2007subsolutions})
that
\begin{equation}\label{eq:eq525}
Z^n= e^{-nF(\bar{Y}^n_n)}\prod_{j=0}^{n-1}\left[\sum_{k=1}^{K}\rho_k(\bar{Y}^n_j,j/n)e^{\langle \bar{\alpha}_k(\bar{Y}^n_j,j/n),\bar{U}^n_{j+1} \rangle - H_2(\bar{\alpha}_k(\bar{Y}^n_j,j/n))}\right]^{-1}
\end{equation}
is an unbiased estimator for \eqref{q:mainproblem_fixedtheta} with the $n$-fold product above defining the Radon-Nikodym derivative of the distribution of $(U_1,\cdots, U_n)$ with respect to that of $(\bar{U}^n_1, \cdots, \bar{U}^n_n)$ (evaluated at $(\bar{U}^n_1, \cdots, \bar{U}^n_n)$).
Once again, when the terminal condition $\bar{W}(y,1)\leq2F(y)$ holds for all $y \in \mathbb{R}^m$, the second moment of $Z^n$ decays exponentially at a rate no slower than $\bar{W}(0,0)$, namely \eqref{q:decayrate} is satisfied with $W$ replaced by $\bar W$.
Thus if one can find a $\bar W$ as above with $\bar W(0,0) =2\gamma$, one has an asymptotically efficient importance sampling estimator.
In general one seeks a $\bar W$ which has the largest possible value at $(0,0)$.

\par

When $\xi$ is a simple form distribution (such as a Normal, Gamma, Poisson, exponential or a binomial), the tilted distribution \eqref{eq:eq813} typically belongs to the same distribution family with a different parameter. In such cases, samples  from \eqref{q:tilted_U} can be generated easily. However, in  general  the distribution of $U_i=G(X_i)$ may not take a simple form. To simulate from \eqref{q:tilted_U} in such a general situation, one needs to  invert the conditional cumulative  distributions and then evaluate them at uniform random variables. However, with a general nonlinear function $G$, the distribution $\xi$ is rarely available in a tractable form, making such a procedure difficult to start with. Even when $\xi$ is available in a closed form,  inverting the  conditional cumulative distributions  requires iteratively carrying out numerical integrations, which is highly computationally intensive. For these reasons, the practical utility of changing measures on $U_i$ is limited to situations in which $\xi$ takes a simple form.

\subsection{The exponential change of measure on variables $X_i$}\label{ss:change_on_X}

The computational issue of simulating from the tilted distribution \eqref{q:exp_change} is largely due to the complexity of $\xi$, the distribution of $U_i=G(X_i)$. This motivates us to consider the alternative approach of conducting the change of measure on variable $X_i$, whose distribution $\eta$ is assumed to be of a simpler form.

In this subsection, we assume that $H(a,\alpha)< \infty$ for all $(a,\alpha)\in \mathbb{R}^{h+m}$, and let $L$ be the Legendre transformation of $H$:
\begin{equation}\label{q:def_L}
L(b, \beta)=\sup_{(a,\alpha)\in \mathbb{R}^{h+m}}\big(\langle a, b \rangle + \langle \alpha, \beta \rangle-H(a,\alpha)\big), \ (b,\beta) \in \mathbb{R}^{h+m}.
\end{equation}
Then $L$ has the following representation  \cite[Lemma 6.2.3]{dupuis2011weak}:
\begin{align}
L(b, \beta) = \inf_{\mu \in \mathscr{P}(\mathbb{R}^h)}\left\{ R(\mu\|\eta): \int_{\mathbb{R}^h}x\mu(dx)=b, \int_{\mathbb{R}^h}G(x)\mu(dx)=\beta \right\},
\label{L representation}
\end{align}
where $R(\mu\|\eta)$ is the relative entropy of the probability measure $\mu$ with respect to $\eta$, defined as
\begin{equation}\label{q:def_relativeentropy}
R(\mu\|\eta) = \int_{\mathbb{R}^h} \log \frac{d\mu}{d\eta} d\mu
\end{equation}
when $\mu$ is absolutely continuous wrt $\eta$ and $\infty$ otherwise.

Recall that $H_1$ is the log-moment generating function of $X_1$. In the change of measure scheme, we will replace random variables $X_1,\cdots, X_n$ by new variables $\bar{X}^n_1,\cdots, \bar{X}^n_n$ that have (conditional) distributions $\eta_a$ of the form
\begin{equation}\label{q:eta_a}
\eta_a(dx) = e^{\langle a,x \rangle - H_1(a)} \eta(dx),
\end{equation}
where $a\in \mathbb{R}^h$ and $\eta$ as before is the distribution of $X_1$. The values of $a$ will be determined dynamically by a function $\bar{a}: \mathbb{R}^m \times [0,1] \to \mathbb{R}^h$ as follows. Let $\bar{Y}^n_0=0$. For $j=0,\cdots, n-1$,
having constructed $\{\bar X^n_i\}_{1\le i\le j}$, $\{\bar U^n_i = G(\bar X^n_i)\}_{1\le i\le j}$ and
$\{\bar Y^n_i\}_{1\le i\le j}$ via \eqref{eq:eq817},
let $\eta_{\bar{a}(\bar{Y}^n_j, j/n)}$ be the distribution of $\bar{X}^n_{j+1}$ conditioned on $\bar{X}^n_1,\dots,\bar{X}^n_j$ and draw a sample $\bar{X}^n_{j+1}$ from this conditional distribution.
Let
\[
\bar{Y}^n_{j+1}=\bar{Y}^n_j+G(\bar{X}^n_{j+1})/n.
\]
Thus recursively we obtain $\{\bar Y^n_i, \bar U^n_i, \bar X^n_i\}_{i=1}^n $.
Using these random variables we define the estimator
\begin{equation}\label{q:Zn2}
Z^n= e^{-nF(\bar{Y}^n_n)}\prod_{j=0}^{n-1}e^{H_1(\bar{a}(\bar{Y}^n_j,j/n))- \langle \bar{a}(\bar{Y}^n_j,j/n),\bar{X}^n_{j+1} \rangle}
\end{equation}
which as before is an unbiased estimator for \eqref{q:mainproblem_fixedtheta}. 

In comparison to schemes introduced in Section \ref{ss:change_on_U}, the main advantage of the  scheme proposed in the current section is the ease of implementation because, as discussed earlier, when $G$ takes a complex form, one can  simulate from $\eta_{\bar{a}(\bar{Y}^n_j, j/n)}$ more easily than from the distribution in \eqref{eq:eq813}. In order to motivate the choice of the function
$\bar a$ (or more generally a collection of functions $\{\bar a_k\}_{k=1}^K$) for constructing a ``good'' importance sampling estimator, we proceed as in \cite{dupuis2004importance} by identifying an Issacs equation associated with the control problem of minimizing the asymptotic second moment of $Z^n$. The discussion below leading to the partial differential equation in \eqref{q:isaccs_H} will be formal, however it will lead to an importance sampling estimator with rigorous asymptotic performance bounds, as is shown in Theorem \ref{t:main}.

For each $i=0,\dots,n-1$ and each $y \in \mathbb{R}^m$, we define a quantity $V^n(y,i)$ as
\begin{equation}\label{q:def_Vn}
V^n(y,i) = \inf_{\bar{a}} \mathbb{E}_y\left[ \left( e^{-nF(\bar{Y}^n_n)}\prod_{j=i}^{n-1}e^{H_1(\bar{a}(\bar{Y}^n_j,j/n))- \langle \bar{a}(\bar{Y}^n_j,j/n),\bar{X}^n_{j+1} \rangle} \right)^2 \right],
\end{equation}
where the minimum is taken among all possible choices of the function $\bar{a}$, the subscript $y$ in $\mathbb{E}_y$ refers to the fact that $\bar{Y}^n_i=y$, and the values of $\bar{X}^n_{i+1}, \cdots, \bar{X}^n_{n}, \bar{Y}^n_{i+1}, \cdots, \bar{Y}^n_{n}$ are generated using the conditional distributions
$\{\eta_{\bar a(\bar Y_j^n, j/n)}\}_{j=i}^{n-1}$ with $\bar{Y}^n_{j}= y + \sum_{l=i+1}^j G(\bar{X}^n_l)$. 
Let $V^n(y,n)=\exp\{ -2nF(y) \}$. Note that $V^n(0,0)$ is the minimum value of the second moment of $Z^n$ that can be achieved over all possible choices of functions $\bar a: \mathbb{R}^m \times [0,1]\to \mathbb{R}^h$.

Using the property of Radon-Nikodym derivatives, we can rewrite $V^n(y,i)$ in terms of the original random variables $X_{i+1},\cdots, X_n$ as
\[
V^n(y,i)  = \inf_{\bar{a}} \mathbb{E}_y\left[  e^{-2nF(Y_n)}\prod_{j=i}^{n-1}e^{H_1(\bar{a}(Y^n_j,j/n))- \langle \bar{a}(Y^n_j,j/n),X_{j+1} \rangle} \right]
\]
where $Y^n_i=y$ and $Y^n_{j+1}=Y^n_j+G(X_{j+1})/n$ for $j=i,\cdots,n-1$ and as before $\{X_i\}$ are iid with distribution $\eta$. 
By a  standard conditioning argument we get the following dynammic programming equation
\begin{align*}
V^n(y,i)
& = \inf _{a\in \mathbb{R}^h} \int_{\mathbb{R}^h} e^{H_1(a)-\langle a,x\rangle} V^n(y+G(x)/n,i+1)\eta(dx).
\end{align*}
Next, define $W^n(y,i) = -\frac{1}{n}\log V^n(y,i)$ for each $y\in \mathbb{R}^m$ and $i=0,\cdots,n$. For $i < n$ we can write $W^n(y,i)$ as
\begin{align*}
W^n(y,i) & = -\frac{1}{n}\log V^n(y,i)\\
& = -\frac{1}{n} \log\inf_{a\in \mathbb{R}^h} \int_{\mathbb{R}^h} e^{H_1(a)-\langle a,x\rangle +\log V^n(y+G(x)/n,i+1)}\eta(dx)\\
& =-\frac{1}{n} \inf_{a\in \mathbb{R}^h} \log \int_{\mathbb{R}^h} e^{H_1(a)-\langle a,x\rangle -n W^n(y+G(x)/n,i+1)}\eta(dx).
\end{align*}
From the Donsker-Varadhan relative entropy formula (see e.g. \cite[Proposition 1.4.2]{dupuis2011weak}) we have
\begin{align}
&W^n(y,i) \nonumber\\
&\quad = \sup_{a\in \mathbb{R}^h} \inf_{\mu \in \mathscr{P}(\mathbb{R}^h)}\left[\frac{1}{n}\bigg(R(\mu\|\eta) -H_1(a)+ \int_{\mathbb{R}^h} \langle a,x\rangle \mu(dx) \bigg) + \int_{\mathbb{R}^h} W^n(y+G(x)/n,i+1)\mu(dx) \right].
\label{q:W}
\end{align}\par

Continuing to proceed formally, suppose $W: \mathbb{R}^m\times[0,1] \rightarrow \mathbb{R}$ is a continuously differentiable function such that $W^n(y,i) = W(y, i/n)$. Applying the Taylor expansion on $W(y+G(x)/n,(i+1)/n)$, we have, neglecting higher order terms,
\begin{align*}
& \int_{\mathbb{R}^h} W\big(y+G(x)/n,(i+1)/n\big)\mu(dx) \\
\approx & W(y,i/n) + \frac{1}{n}W_t(y,i/n)+ \frac{1}{n}\int_{\mathbb{R}^h}  \langle DW(y,i/n), G(x) \rangle \mu(dx),
\end{align*}
where $W_t$ and $DW$ are the derivatives of $W$ w.r.t. $t$ and $y$ respectively. We can then rewrite \eqref{q:W} in terms of $W$ as:
\begin{align*}
0= \sup_{a\in \mathbb{R}^h} \inf_{\mu \in \mathscr{P}(\mathbb{R}^h)}\left[R(\mu\|\eta) -H_1(a)+ \int_{\mathbb{R}^h} \langle a,x\rangle \mu(dx) + W_t(y,t)+ \int_{\mathbb{R}^h}  \langle DW(y,t), G(x) \rangle \mu(dx) \right].
\end{align*}
Using the representation (\ref{L representation}) the above equation can be rewritten as
\begin{align*}
\sup_{a\in \mathbb{R}^h}\inf_{(b,\beta)\in \mathbb{R}^{h+m}}\left[L(b,\beta) -H_1(a)+ \langle a,b\rangle+ W_t(y,t)+ \langle DW(y,t), \beta \rangle \right]
= 0.
\end{align*}

Finally, we define $\mathbb{H}: \mathbb{R}^{2m+2h} \rightarrow \mathbb{R}\cup \{\infty\}$ as
\begin{equation}\label{eq:eq347}
	\mathbb{H}(s,a,b,\beta) = \langle a, b \rangle + \langle s, \beta \rangle +L(b,\beta) - H_1(a), \ s,\beta \in \mathbb{R}^m, \ a, b \in \mathbb{R}^h,\end{equation}
and obtain the following Issacs equation
\begin{equation}\label{q:isaccs_H}
W_t(y,t)+ \sup_{a\in \mathbb{R}^h}\inf_{(b,\beta)\in \mathbb{R}^{h+m}} \mathbb{H}(DW(y,t),a,b, \beta)=0,
\end{equation}
From the equality $V^n(y,n)=\exp\{ -2nF(y) \}$ we see that the above PDE is accompanied with the terminal condition $W(y,1)=2F(y)$.

With the above formal derivation as the basis, we now turn to rigorous results. As in Section \ref{ss:change_on_U}, we begin with some definitions.
A continuously differentiable function $\bar{W}: \mathbb{R}^m \times [0,1] \rightarrow \mathbb{R}$ is a classical subsolution to \eqref{q:isaccs_H} if it satisfies
\begin{align}\label{Isaacs subsolution}
\bar{W}_t(y,t) + \sup_{a\in \mathbb{R}^h}\inf_{(b,\beta)\in \mathbb{R}^{h+m}} \mathbb{H}(D\bar{W}(y,t),a,b, \beta) \geq 0
\end{align}
for each $(y,t)\in \mathbb{R}^{m}\times [0,1]$.
If functions $\bar{W}:\mathbb{R}^m\times [0,1] \rightarrow \mathbb{R}$, $\rho_k: \mathbb{R}^m \times [0,1] \rightarrow \mathbb{R}$, $\bar{a}_k: \mathbb{R}^m \times [0,1] \rightarrow \mathbb{R}^h$, $1 \leq k \leq K$ satisfy all conditions in
Definition \ref{def_gen_sub} (with $\bar \alpha_k$ replaced by $\bar a_k$) except that \eqref{q:def_gen_sub} is replaced by
\begin{equation}\label{q:def_gen_sub_H}
	r_k(y,t) + \inf_{(b,\beta)\in \mathbb{R}^{h+m}} \mathbb{H} (s_k(y,t);\bar{a}_k(y,t),b,\beta) \geq 0,
	\end{equation}
then $(\bar{W},\rho_k,\bar{a}_k)$ is said to be a generalized subsolution/control to \eqref{q:isaccs_H}. For the special case in which $K=1$ and $\rho_1=1$, we abbreviate the notation $(\bar{W}, \rho_k, \bar{a}_k)$ as $(\bar{W}, \bar{a})$ and call it a subsolution/control pair.

A dynamic change of measure, analogous to Section \ref{ss:change_on_U}, based on a generalized subsolution/control $(\bar{W},\rho_k,\bar{a}_k)$ is constructed as follows. Let $\bar{Y}^n_0=0$.
For $j=0,\dots,n-1$, having constructed $\{\bar X^n_i\}_{1\le i\le j}$ and $\{\bar Y^n_i\}_{1\le i\le j}$,
we generate a multinomial random variable $I$ with (conditional) probabilities $\mathbb{P}[I=k] = \rho_k(\bar{Y}^n_j,j/n)$ for $k\in\{1,2,\dots,K\}$. Next, we sample $\bar{X}^n_{j+1}$ from the distribution
\begin{equation}\label{q:exp_change_X}
e^{\langle \bar{a}_I(\bar{Y}^n_j,j/n), x \rangle - H_{1}(\bar{a}_I(\bar{Y}^n_j,j/n))}\eta(dx)
\end{equation}
 and define
$\bar{Y}^n_{j+1}= \bar{Y}^n_j + \frac{1}{n} G (\bar{X}^n_{j+1})$. Finally, we define
\begin{equation}\label{q:Z^n_H}
Z^n= e^{-nF(\bar{Y}^n_n)}\prod_{j=0}^{n-1}\left[\sum_{k=1}^{K}\rho_k(\bar{Y}^n_j,j/n)e^{\langle \bar{a}_k(\bar{Y}^n_j,j/n),\bar{X}^n_{j+1} \rangle - H_1(\bar{a}_k(\bar{Y}^n_j,j/n))}\right]^{-1}
\end{equation}
which as before is an unbiased estimator for \eqref{q:mainproblem_fixedtheta}.
To reiterate, the appeal of the estimator in \eqref{q:Z^n_H} over the estimator in \eqref{eq:eq525} is that, in many examples
it is much easier to simulate from \eqref{q:exp_change_X} than from \eqref{q:exp_change}.


Theorem \ref{t:main} below  which is an analogue of \cite[Theorem 8.1]{dupuis2007subsolutions} shows that the second moment of $Z^n$ decays exponentially at a rate no slower than $\bar{W}(0,0)$. The proof is given in the appendix.

\begin{theorem}\label{t:main}
	Assume that  $H(a,\alpha) < \infty$ for all $(a,\alpha)\in \mathbb{R}^{h+m}$, that $(\bar{W},\rho_k,\bar{a}_k)$ is a generalized subsolution/control to \eqref{q:isaccs_H} and satisfies the terminal condition $\bar{W}(y,1) \leq 2F(y)$ for all $y\in\mathbb{R}^m$, and that $Z^n$ is as defined in \eqref{q:Z^n_H}. Then
	$$\liminf_{n\rightarrow\infty}-\frac{1}{n}\log\mathbb{E}[(Z^n)^2]\geq \bar{W}(0,0).$$	
\end{theorem}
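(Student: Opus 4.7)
The plan is to mimic the proof of Theorem 8.1 in Dupuis--Wang \cite{dupuis2007subsolutions}, adapted to the modified Isaacs equation \eqref{q:isaccs_H} that governs the $X$-based rather than $U$-based change of measure. First I would reduce $\mathbb{E}[(Z^n)^2]$ to an ordinary expectation under the original product measure $\eta^{\otimes n}$. Writing $\Lambda^n_j(x) = \sum_{k=1}^K \rho_k(\bar Y^n_j,j/n)\,e^{\langle \bar a_k(\bar Y^n_j,j/n),x\rangle - H_1(\bar a_k(\bar Y^n_j,j/n))}$, the importance sampling density at step $j$ equals $\Lambda^n_j\,d\eta$, so $(Z^n)^2$ contains a factor $\prod_j \Lambda^n_j(\bar X^n_{j+1})^{-2}$ and, after absorbing one copy of the density,
\[
\mathbb{E}[(Z^n)^2] = \mathbb{E}_{\mathrm{orig}}\!\left[\exp\!\Bigl(-2nF(Y_n) - \sum_{j=0}^{n-1}\log \Lambda^n_j(X_{j+1})\Bigr)\right].
\]
Then I would apply the Donsker--Varadhan variational representation to express $-\frac{1}{n}\log \mathbb{E}[(Z^n)^2]$ as the infimum over controlled conditional laws $\{\mu_j\}$ of
$\mathbb{E}\!\bigl[2F(\bar Y^n_n) + \frac{1}{n}\sum_{j}\log\Lambda^n_j(\bar X^n_{j+1}) + \frac{1}{n}\sum_{j} R(\mu_j\|\eta)\bigr]$,
and the goal becomes to show this infimum is at least $\bar W(0,0) - o(1)$ for every admissible control.

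Next I would use the terminal condition to replace $2F(\bar Y^n_n)$ by $\bar W(\bar Y^n_n,1)$ and telescope:
\[
\bar W(\bar Y^n_n,1) = \bar W(0,0) + \frac{1}{n}\sum_{j=0}^{n-1}\bigl[\bar W_t(\bar Y^n_j,j/n) + \langle D\bar W(\bar Y^n_j,j/n),\, G(\bar X^n_{j+1})\rangle\bigr] + E_n,
\]
where $E_n$ gathers the Taylor remainder. With $\bar W_t = \sum_k \rho_k r_k$ and $D\bar W = \sum_k \rho_k s_k$, this injects precisely the ingredients on which the subsolution inequality \eqref{q:def_gen_sub_H} acts. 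To tie in the remaining terms I would use Jensen's inequality on $\log\Lambda^n_j$, giving the pointwise bound $\log\Lambda^n_j(x) \ge \sum_k \rho_k(\langle \bar a_k,x\rangle - H_1(\bar a_k))$, and the representation \eqref{L representation} for $L$, which gives $R(\mu_j\|\eta) \ge L(b_j,\beta_j)$ with $b_j=\int x\,\mu_j(dx)$ and $\beta_j=\int G(x)\,\mu_j(dx)$.

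Conditioning on $\mathcal{F}_j$, the per-step increment in the lower bound becomes
\[
\sum_k \rho_k\!\left[r_k + \langle s_k,\beta_j\rangle + \langle \bar a_k,b_j\rangle - H_1(\bar a_k)\right] + L(b_j,\beta_j) = \sum_k \rho_k\!\left[r_k + \mathbb{H}(s_k;\bar a_k,b_j,\beta_j)\right],
\]
after recognizing the definition \eqref{eq:eq347} of $\mathbb{H}$ and noting that the $L(b_j,\beta_j)$ terms combine with weight one (since $\sum_k \rho_k = 1$). By \eqref{q:def_gen_sub_H} each bracket is bounded below by $r_k + \inf_{(b,\beta)}\mathbb{H}(s_k;\bar a_k,b,\beta) \ge 0$, so all telescoping increments are nonnegative in conditional expectation and we obtain $\mathbb{E}[\Phi_n] \ge \bar W(0,0) + \mathbb{E}[E_n]$ uniformly over controls.

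The main obstacle is controlling $\mathbb{E}[E_n] \to 0$ uniformly over the relevant controls. Writing the remainder via $\bar W(\bar Y^n_{j+1},(j+1)/n) - \bar W(\bar Y^n_j,j/n) = \int_0^{1/n}[\bar W_t(\cdot) + \langle D\bar W(\cdot),G(\bar X^n_{j+1})\rangle]\,ds$ along the linear segment gives an exact expression whose deviation from the frozen-coefficient version is governed by the modulus of continuity of $\bar W_t, D\bar W$ and a factor of $|G(\bar X^n_{j+1})|/n$. The uniform boundedness and continuity of $(r_k,s_k,\rho_k,\bar a_k)$ in condition (iv) give uniform continuity of $\bar W_t, D\bar W$ on compact sets, while the finiteness of $H(a,\alpha)$ everywhere (hence exponential moments of $G(X_1)$ under every tilt) lets one restrict without loss of generality to controls whose entropic cost is $O(1)$, which forces $\frac{1}{n}\sum_j \mathbb{E}|G(\bar X^n_{j+1})|$ to stay bounded and the $\bar Y^n_j$ to live in a compact set with high probability. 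Combining these bounds shows $\mathbb{E}[E_n] = o(1)$, completing the proof.
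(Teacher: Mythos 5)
Your proposal is correct in outline, but it follows a genuinely different route from the paper's proof. Both arguments share the same core ingredients: the reduction of $\mathbb{E}[(Z^n)^2]$ to an expectation under $\eta^{\otimes n}$, the relative-entropy (Donsker--Varadhan) representation, Jensen's inequality to replace $\log\sum_k\rho_k e^{z_k}$ by $\sum_k\rho_k z_k$ (in the paper this appears as the bound $V^n\le\tilde V^n$), the terminal condition $\bar W(\cdot,1)\le 2F$, the representation \eqref{L representation} of $L$, and the subsolution inequality \eqref{q:def_gen_sub_H}. Where you diverge is in how the subsolution inequality is brought to bear: you stay at finite $n$, telescope $\bar W$ along the discrete controlled path, condition step by step so that $(b_j,\beta_j)$ appear, and absorb everything into $\sum_k\rho_k[r_k+\mathbb{H}(s_k;\bar a_k,b_j,\beta_j)]\ge 0$, leaving a Taylor remainder $E_n$ to be killed uniformly over entropy-bounded controls. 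The paper instead passes to a continuous-time interpolation $(\tilde Y^n,\nu^n)$, proves tightness and uniform integrability (its Lemma \ref{lemma A.1}, which is exactly the estimate \eqref{q:uni_integrable_G} your remainder argument also needs), identifies the limit $\tilde Y(t)=\int_0^t\int G(x)\nu(dx|s)\,ds$ (Lemma \ref{lemma A.2}), passes to the limit term by term via Fatou, lower semicontinuity of relative entropy and dominated convergence, and only then applies \eqref{q:def_gen_sub_H} along the limiting trajectory with $\beta(t)=d\tilde Y(t)/dt$. Your route buys a more elementary, prelimit verification argument that avoids the weak-convergence and limit-identification lemmas, at the cost of the delicate uniform estimate $\mathbb{E}[E_n]\to 0$: continuity of $\bar W_t,D\bar W$ on compacta is not by itself enough since $\bar Y^n_j$ is only tight, so you must split on $\{\|G(\bar X^n_{j+1})\|>C\}$ and on $\bar Y^n_j$ leaving a large compact set and invoke the uniform boundedness of $(r_k,s_k)$ together with the uniform integrability of $\frac1n\sum_j\|G(\bar X^n_j)\|$ for entropy-bounded controls; with that splitting carried out (and with the standard justification that near-optimal controls have $O(1)$ entropy cost, which you correctly flag via finiteness of $H$), your argument closes. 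The paper's route defers this bookkeeping to the limit, where the chain rule for relative entropy and lower semicontinuity make it cleaner, but requires the weak-convergence machinery adapted from Dupuis and Wang.
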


In practice, we will like to construct a generalized subsolution/control $(\bar{W},\rho_k,\bar{a}_k)$ that has a simple form and for which the value of $\bar W(0,0)$ is as large as possible. For this, we first consider subsolution/control pairs $(\bar W, \bar a)$, as defined below \eqref{q:def_gen_sub_H}, for which $\bar W$ is an affine function of $(y,t)$ and $\bar a$ is in fact a constant.

If we write $\bar{W}$ in the form
\begin{equation}\label{q:W_bar}
\bar{W}(y,t) = \bar{c} + \langle u,y \rangle - (1-t)v \text{ for some } \bar{c} \in \mathbb{R}, u \in \mathbb{R}^m, v\in \mathbb{R},
\end{equation}
then $(\bar{W},\bar{a})$ is a subsolution/control pair if the following inequality holds for all $(y,t)\in \mathbb{R}^{m+1}$:
\begin{equation}\label{q:a_bar}
\bar{W}_t(y,t) + \inf_{(b,\beta)\in \mathbb{R}^{h+m}} \mathbb{H}(D\bar{W}(y,t),\bar{a},b,\beta) \geq 0,
\end{equation}
namely
\begin{equation}\label{eq:eq932}
	v + \inf_{(b,\beta)\in \mathbb{R}^{h+m}} \mathbb{H}(u, \bar a, b, \beta) \ge 0.
\end{equation}
Next, we select a finite collection of pairs $\{(\bar{W}_k,\bar{a}_k), k=1,\dots,K \}$ from this family of subsolution/control pairs, such that the pointwise minimum $\bar W \doteq \wedge^K_{k=1}\bar{W}_k$ defined as $\bar W(y,t) = \wedge^K_{k=1}\bar{W}_k(y,t)=\min_{k=1,\cdots,K}\bar{W}_k(y,t)$ satisfies
\begin{equation}\label{q:wedge_W}
\wedge^K_{k=1}\bar{W}_k(y,1) \leq 2F(y) \text{ for all } y\in \mathbb{R}^m.
\end{equation}
In the process of choosing $\{(\bar{W}_k,\bar{a}_k), k=1,\dots,K \}$ we also maximize $\wedge^K_{k=1}\bar{W}_k(0,0)$ among all qualified choices.
Finally, we choose a small positive number $\delta$, and define
\begin{equation}\label{q:W_delta}
\bar{W}^{\delta}(y,t) \doteq -\delta\log \left( \sum_{k=1}^{K}e^{-(1/\delta)\bar{W}_k(y,t)} \right)
\end{equation}
	and
\begin{equation}\label{q:rho_delta}
\rho^{\delta}_k(y,t) \doteq \frac{e^{-(1/\delta)\bar{W}_k(y,t)} }{\sum_{i=1}^{K}e^{-(1/\delta)\bar{W}_i(y,t)} }
\text{ for } 1\leq k \leq K.
\end{equation}
Then, following  \cite{dupuis2007subsolutions}, we see that $(\bar{W}^{\delta}, \rho^{\delta}_k, \bar{a}_k)$ is a generalized subsolution/control with
$$\wedge^K_{k=1}\bar{W}_k(y,t) \geq \bar{W}^{\delta}(y,t) \geq \wedge^K_{k=1}\bar{W}_k(y,t)-\delta\log K \text{ for all } (y,t).$$
In particular, the difference between $ \bar{W}^{\delta}(0,0)$ and $\wedge^K_{k=1}\bar{W}_k(0,0)$ is no larger than $\delta\log K$.
Thus the estimator $Z^n$ based on this generalized subsolution/control satisfies
	\begin{equation}\label{eq:eq446}
		\liminf_{n\rightarrow\infty}-\frac{1}{n}\log\mathbb{E}[(Z^n)^2]\geq \bar{W}^{\delta}(0,0) \ge \bar{W}(0,0) - \delta\log K.
		\end{equation}	
In Section \ref{s:computation} we illustrate the implementation of such a construction for some  examples.
\section{Analysis of some approximate problems} \label{s:approximation}

In  this section, we consider the minimization problem introduced in \eqref{q:mainproblem} and study the relation between its optimal solution and  optimal solutions of certain associated approximating problems. This relationship provides a justification for using solutions of the approximating problems as estimates of the true solution of \eqref{q:mainproblem}, as we will do in numerical examples of Section \ref{s:computation}.

Denote the objective function of \eqref{q:mainproblem}  as
\begin{equation}\label{q:def_p}
p(\theta)=\mathbb{E}\exp\left\{-nF\bigg(\frac{1}{n} \sum_{i=1}^{n}G(X_i,\theta)\bigg)\right\}.
\end{equation}
It is possible for $p$ to be differentiable even if $F$ is not differentiable everywhere. However, the gradient of $p$ is not given by the expectation of the gradient of the function inside the expecation w.r.t. $\theta$, unless additional conditions hold (see, e.g., \cite[Theorem 7.49]{shapiro2009lectures}). Those conditions are not satisfied with $F(y)=\infty 1_{A^c}(y)$, which is one of the case we are  interested in. There are also formulas for gradients of certain types of probability functions, see, e.g., \cite{marti1996differentiation,uryasev1995derivatives,van2014gradient}, but it is not practical to apply those results to the problem here, because the large value of $n$ and the extremely small probability will require an extremely large sample size
in any such application. To utilize a gradient based optimization algorithm to solve \eqref{q:mainproblem}, we approximate $F$ by a continuous function $\varphi: \mathbb{R}^m \to \mathbb{R}$, and use a solution to the approximation problem
\begin{equation}\label{q:appro_phi}
\min_{\theta \in \Theta} \mathbb{E}\exp\left\{-n \varphi \bigg(\frac{1}{n} \sum_{i=1}^{n}G(X_i,\theta)\bigg)\right\}
\end{equation}
as an estimate for the solution of \eqref{q:mainproblem}. The function $\varphi$ will be chosen so that the gradient of the objective function of \eqref{q:appro_phi} is given by the expectation of the gradient of the function inside the expectation.

The following proposition shows that the function $\varphi$ can be chosen in an appropriate manner to guarantee the solution to \eqref{q:appro_phi} to be sufficiently close to the solution of \eqref{q:mainproblem}, as long as $\varphi$ is a sufficiently close approximation of $F$.

  \begin{proposition}\label{p:phi_approximate_prob}
	Suppose that $\Theta\subset \mathbb{R}^d$ is compact and $F:\mathbb{R}^m \to \mathbb{R}\cup\{\infty\}$ is upper semicontinuous. In addition, let $\{\varphi^k\}_{k\in \mathbb{N}}$ be a sequence of continuous functions from $\mathbb{R}^m$ to $\mathbb{R}$, such that $\varphi^1$ is bounded from below, $\varphi^k(y) \leq \varphi^{k+1}(y)$ for all $k\in \mathbb{N}$ and $y\in \mathbb{R}^m$, and $\lim_{k\to\infty }\varphi^k(y)= F(y)$ for all $y\in \mathbb{R}^m$. For each $k\in \mathbb{N}$, define
\begin{equation}\label{q:def_pk}
p^k(\theta)=\mathbb{E}\exp\left\{-n \varphi^k \bigg(\frac{1}{n} \sum_{i=1}^{n}G(X_i,\theta)\bigg)\right\}, \ \theta \in \Theta.
\end{equation}
		Then
	$$\lim_{k\to\infty} \min_{\theta\in \Theta} p^k(\theta) = \min_{\theta\in \Theta}  p(\theta),$$
	and for any choice of $\delta^k \downarrow 0$ and $\theta^k \in \delta^k-\argmin_{\theta\in \Theta} p^k$ (i.e.
	$p^k(\theta^k) \le \min_{\theta\in \Theta} p^k(\theta) + \delta^k$), all cluster points of the sequence $\{ \theta^k \}_{k \in \mathbb{N}}$ belong to $\argmin_{\theta\in \Theta} p$. If $\argmin_{\theta\in \Theta} p$ consists of a unique point $\theta^*$, one must actually have $\theta^k \rightarrow \theta^*$.
\end{proposition}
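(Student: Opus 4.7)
My plan is to deduce both assertions from three standard ingredients: (i) pointwise monotone convergence $p^k(\theta) \downarrow p(\theta)$ on $\Theta$, (ii) continuity of each $p^k$ on $\Theta$, and (iii) lower semicontinuity of $p$ on $\Theta$. Once these are in place, the two conclusions follow from the usual compactness-plus-epi-convergence reasoning for a decreasing sequence of continuous functions converging to a lower semicontinuous limit on a compact set.

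The first step is to observe that the hypothesis ``$\varphi^1$ bounded below'' propagates to the whole sequence and to $F$: if $c \doteq \inf_{y \in \mathbb{R}^m} \varphi^1(y) > -\infty$, then monotonicity $\varphi^k \leq \varphi^{k+1}$ gives $\varphi^k \geq c$ for every $k$, and the pointwise limit gives $F \geq c$ as well. Consequently, both $\exp\{-n\varphi^k(Y_n(\theta))\}$ and $\exp\{-nF(Y_n(\theta))\}$ are uniformly bounded above by the integrable constant $e^{-nc}$. Fact (i) is then immediate from dominated (or monotone) convergence, since these integrands form a decreasing sequence converging almost surely to $\exp\{-nF(Y_n(\theta))\}$ by continuity of the exponential. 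Fact (ii) is analogous: for $\theta_j \to \theta$ in $\Theta$, joint continuity of $G$ and continuity of $\varphi^k$ give a.s.\ convergence of the integrand, and $e^{-nc}$ serves as a dominating envelope. For Fact (iii), upper semicontinuity of $F$ combined with continuity and monotonicity of $t \mapsto e^{-nt}$ makes $y \mapsto \exp\{-nF(y)\}$ lower semicontinuous, and Fatou's lemma (applicable since the integrands are nonnegative) together with the a.s.\ convergence $Y_n(\theta_j) \to Y_n(\theta)$ yields $\liminf_j p(\theta_j) \geq p(\theta)$.

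With (i)--(iii) in hand, the remainder is routine. Compactness of $\Theta$ with (ii) and (iii) ensures $v_k \doteq \min_{\Theta} p^k$ and $v \doteq \min_{\Theta} p$ are attained. From $p \leq p^k$ we get $v \leq v_k$, while evaluating $p^k$ at any fixed $\theta^* \in \argmin_{\Theta} p$ and using (i) gives $v_k \leq p^k(\theta^*) \to p(\theta^*) = v$, so $v_k \to v$. For the second assertion, let $\bar\theta$ be a cluster point of $\{\theta^k\}$ and extract $\theta^{k_j} \to \bar\theta$. The chain $p(\theta^{k_j}) \leq p^{k_j}(\theta^{k_j}) \leq v_{k_j} + \delta^{k_j}$, together with $v_{k_j} \to v$, $\delta^{k_j} \to 0$, and the lower semicontinuity of $p$, gives $p(\bar\theta) \leq \liminf_j p(\theta^{k_j}) \leq v$, so $\bar\theta \in \argmin_\Theta p$. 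When $\argmin_\Theta p = \{\theta^*\}$, compactness of $\Theta$ plus a standard subsequence argument forces $\theta^k \to \theta^*$. I do not foresee any substantive obstacle; the only technical point is the uniform lower bound on $\varphi^k$ and $F$, which supplies the single integrable envelope $e^{-nc}$ needed for both the dominated convergence and Fatou arguments.
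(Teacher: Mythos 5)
Your proposal is correct, and its first half coincides with the paper's: you establish exactly the three facts the paper establishes, namely $p^k(\theta)\downarrow p(\theta)$ pointwise by dominated convergence (using the uniform lower bound inherited from $\varphi^1$), continuity of each $p^k$ on $\Theta$, and lower semicontinuity of $p$ via upper semicontinuity of $F$ and Fatou's lemma. Where you differ is in how the conclusions are extracted. The paper finishes by citing epi-convergence theory: \cite[Proposition 7.4(c)]{rockafellar2009variational} gives that the monotone sequence $p^k$ epi-converges to $p$, and \cite[Theorem 7.33]{rockafellar2009variational}, together with compactness of $\Theta$, then yields convergence of the optimal values and of the $\delta^k$-minimizers in one stroke. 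You instead prove these conclusions by hand: $v \le v_k \le p^k(\theta^*) \to p(\theta^*) = v$ for the values (using $p \le p^k$ and pointwise convergence at a minimizer of $p$, which exists by lsc plus compactness), and the chain $p(\theta^{k_j}) \le p^{k_j}(\theta^{k_j}) \le v_{k_j} + \delta^{k_j}$ combined with lower semicontinuity of $p$ for the cluster-point claim, with the uniqueness case following by the usual compactness/subsequence argument. Your route is more elementary and self-contained, exploiting the monotonicity $p \le p^k$ that makes the direct argument short; the paper's route is shorter on the page and sits inside a general framework that would also handle non-monotone approximating families, but in this monotone setting the two arguments carry the same mathematical content and your finishing step is a valid replacement for the two citations.
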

\begin{proof}
Because $\varphi^1$ is bounded from below and $\varphi^k(y)\uparrow F(y)$ for all $y$, we can apply the dominated convergence theorem to conclude that $p^k(\theta)\downarrow p(\theta)$ for all $\theta\in \Theta$. It follows from the continuity of $G$, the upper semicontinuity of $F$, the continuity of $\varphi^k$ and Fatou's lemma  that  $p$ is lower semicontinuous and each $p^k$ is continuous on $\Theta$.
By an application of \cite[Proposition 7.4(c)]{rockafellar2009variational}, $p^k$ epi-converges to $p$. With the compactness of $\Theta$, all conclusions of the present proposition follows from \cite[Theorem 7.33]{rockafellar2009variational}.
\end{proof}

When Proposition \ref{p:phi_approximate_prob} is applied to the case $F(y)=\infty1_{A^c}(y)$, the upper semicontinuity assumption of $F$ amounts to requiring $A$ to be an open set. If the distribution of $G(X_i,\theta)$ is absolutely continuous for each $\theta$ and the boundary of $A$ (denoted as $\text{bdry} A$) has Lebesgue measure 0, then $\mathbb{P}\left[ \frac{1}{n}\sum_{i=1}^{n}G(X_i,\theta) \in \text{bdry} A \right]=0$ and we can replace $A$ by the interior of $A$ without changing the value of $\mathbb{P}\left[ \frac{1}{n}\sum_{i=1}^{n}G(X_i,\theta) \in  A\right]$, which is in fact a continuous function of $\theta$  in this case.

Next, we consider the problem \eqref{q:appro_phi} with a fixed continuous function $\varphi$, and study its convergence as $n\to \infty$. Note that our main interest is in solving \eqref{q:mainproblem} or its approximation \eqref{q:appro_phi} for a fixed value of $n$. Nonetheless,  the convergence behavior of  \eqref{q:appro_phi} as $n\to \infty$ provides information about sensitivity of the solution of \eqref{q:appro_phi} with respect to $n$, and can also be used in computation to find an initial point in solving \eqref{q:appro_phi}. For this purpose, we define functions $g^n: \Theta\to \mathbb{R}$ and $g:\Theta \to \mathbb{R}$ as
\begin{equation}\label{q:def_gn}
g^n(\theta) = -\frac{1}{n} \log \mathbb{E}\left[  \exp \left\{ -n\varphi\left( \frac{1}{n}\sum_{i=1}^{n}G(X_i,\theta) \right) \right\} \right]
\end{equation}
and
\begin{equation}\label{q:def_g}
g(\theta) = \inf_{\nu \in \mathscr{P}(\mathbb{R}^h)} \left[  \varphi\left( \int_{\mathbb{R}^h} G(x,\theta)\nu(dx) \right) + R(\nu||\eta) \right].
\end{equation}
Note that $g^n(\theta)$ is simply $-\frac{1}{n}$ times the log of the objective function of \eqref{q:appro_phi}, so \eqref{q:appro_phi} is equivalent to
$\max_{\theta\in \Theta} g^n(\theta)$.

Theorem \ref{t:limiting} below shows that $g^n$ converges to $g$ uniformly under suitable conditions.
Let $H^{\theta}_2$ denote the log moment generating function of $G(X_1, \theta)$, namely,
\begin{equation}\label{q:def_H2_theta}
H^{\theta}_2(\alpha)=\log \mathbb{E} \ e^{\langle \alpha,G(X_1,\theta) \rangle }, \ \alpha \in \mathbb{R}^m.
\end{equation}
 Also, let  $L_2^{\theta}$ denote the Legendre transform of $H^{\theta}_2$, i.e.,
\begin{equation}\label{q:def_L2_theta}
L^\theta_2(\beta) = \sup_{\alpha \in \mathbb{R}^m} \left( \langle \alpha,\beta \rangle -H^\theta_2(\alpha) \right), \ \beta \in \mathbb{R}^m.
\end{equation}

\begin{theorem}
	Let $\Theta$ be a compact subset of $\mathbb{R}^d$.
	Assume that  $\sup_{\theta \in \Theta} H^{\theta}_2(\alpha)<\infty$ for all $\alpha\in \mathbb{R}^{m}$.
 If $\varphi$ is continuous and bounded,
	then $g^n\to g$ uniformly on $\Theta$.
	\label{t:limiting}
\end{theorem}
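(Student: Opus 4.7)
The plan is to first establish pointwise convergence via classical large deviation machinery, and then upgrade to uniform convergence on the compact set $\Theta$ by proving the family $\{g^n\}_{n\ge 1}$ is equi-continuous on $\Theta$.

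For pointwise convergence, fix $\theta \in \Theta$. By hypothesis $H_2^\theta$ is finite on all of $\mathbb{R}^m$, so Cram\'er's theorem applied to the i.i.d.\ sequence $\{G(X_i,\theta)\}_{i\ge 1}$ yields a large deviation principle for $Y_n(\theta) = \frac{1}{n}\sum_{i=1}^n G(X_i,\theta)$ with rate function $L_2^\theta$. Since $\varphi$ is bounded and continuous, Varadhan's lemma gives
\begin{equation*}
\lim_{n\to\infty} g^n(\theta) \;=\; \inf_{\beta \in \mathbb{R}^m}\{L_2^\theta(\beta) + \varphi(\beta)\}.
\end{equation*}
Specializing the representation \eqref{L representation} to the single marginal constraint on $\int G(x,\theta)\,\nu(dx)$ (i.e.\ relaxing the constraint on $\int x\,\nu(dx)$), we rewrite $L_2^\theta(\beta) = \inf\{R(\nu\|\eta):\int G(x,\theta)\,\nu(dx)=\beta\}$. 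Exchanging the order of the infima then identifies the Varadhan limit as $g(\theta)$, giving pointwise convergence $g^n(\theta)\to g(\theta)$ for each $\theta \in \Theta$.

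For the uniform upgrade, I would invoke the Dupuis--Ellis variational representation
\begin{equation*}
g^n(\theta) \;=\; \inf_{\{\bar\mu_i\}}\mathbb{E}\!\left[\frac{1}{n}\sum_{i=1}^n R(\bar\mu_i\|\eta) + \varphi\!\left(\frac{1}{n}\sum_{i=1}^n G(\bar X_i,\theta)\right)\right],
\end{equation*}
where the infimum is over adapted sequences of conditional laws $\bar\mu_i$ and $\bar X_i$ is drawn from $\bar\mu_i$. Comparing $g^n(\theta)$ and $g^n(\theta')$ by plugging a near-optimal control for one into the other reduces the task to controlling $\mathbb{E}|\varphi(\bar Y_n(\theta))-\varphi(\bar Y_n(\theta'))|$ under any control with bounded entropy. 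The uniform moment-generating-function bound $\sup_{\theta\in\Theta} H_2^\theta(\alpha) <\infty$ combined with an exponential Chebyshev argument produces a compact $K\subset\mathbb{R}^m$ such that $\bar Y_n(\theta)\in K$ with overwhelming probability uniformly in $n$, $\theta\in\Theta$, and over controls of bounded entropy; on $K$ the bounded continuous function $\varphi$ is uniformly continuous. Continuity of $G$ together with compactness of $\Theta$ lets us pass to a uniform modulus for $|G(x,\theta)-G(x,\theta')|$ restricted to an appropriate truncation. A parallel argument shows $g$ itself is continuous on $\Theta$.

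Having established that $\{g^n\}_{n\ge 1}$ is equi-continuous on the compact set $\Theta$ and that $g$ is continuous, pointwise convergence automatically upgrades to uniform convergence by a standard Arzel\`a--Ascoli-type argument: given $\varepsilon>0$, cover $\Theta$ by finitely many balls on which the equi-continuity modulus is less than $\varepsilon/3$, and use pointwise convergence at the (finitely many) centers. The main obstacle is the equi-continuity step: $\varphi$ is assumed only bounded and continuous (not uniformly continuous), so one cannot bound $\varphi$-differences directly, and the argument has to route through the uniform exponential tightness produced by $\sup_{\theta} H_2^\theta(\alpha)<\infty$ to reduce to a compact range where $\varphi$ becomes uniformly continuous; carefully combining this tightness with the entropy penalty inside the variational representation is where the technical work lies.
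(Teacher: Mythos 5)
Your proposal is correct in outline, but it is organized differently from the paper's proof. The paper never invokes Cram\'er's theorem or Varadhan's lemma inside the proof; instead it works entirely with the Dupuis--Ellis variational representation and proves \emph{continuous convergence}: for every sequence $\theta^n\to\theta$ in $\Theta$ it shows $\liminf g^n(\theta^n)\ge g(\theta)$ (via entropy-bounded near-optimal controls, tightness of $(\bar{\mathcal{L}}^n,\hat\nu^n)$, Fatou, lower semicontinuity and convexity of relative entropy, and the identification $\bar{\mathcal{L}}=\hat\nu$ a.s.) and $\limsup g^n(\theta^n)\le g(\theta)$ (by first proving $g(\theta^n)\to g(\theta)$ and then plugging i.i.d.\ copies of an $\varepsilon$-optimal static measure $\nu^n$ into the representation and using a law-of-large-numbers step); on compact $\Theta$ this is equivalent to uniform convergence to a continuous limit. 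You instead obtain the pointwise limit from classical large deviations (Cram\'er plus Varadhan, legitimate here since $H_2^\theta$ is everywhere finite and $\varphi$ is bounded continuous, with the identification $L_2^\theta(\beta)=\inf\{R(\nu\|\eta):\int G(x,\theta)\nu(dx)=\beta\}$ playing the role of \eqref{L representation}), and you get uniformity by proving equicontinuity of $\{g^n\}$ through the comparison trick of reusing one near-optimal control for two nearby parameters, so that only the $\varphi$-term differs. That comparison step rests on exactly the same technical inputs the paper uses for \eqref{eq:eq1009} and \eqref{q:u.i. of bar{L} 2}: the entropy bound $2\|\varphi\|_\infty+1$ gives tightness (in expectation) of the controlled empirical measures, the assumption $\sup_\theta H_2^\theta(\alpha)<\infty$ gives uniform integrability of $G(\cdot,\theta)$ under them via Donsker--Varadhan, and uniform continuity of $G$ on (compact $x$-truncation)$\times\Theta$ controls the remaining difference. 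One small point of hygiene: "overwhelming probability" is more than you can (or need to) claim -- Markov's inequality only yields that $\bar Y_n(\theta)$ lies in a fixed compact set with probability $1-\epsilon$ uniformly in $n$, $\theta$ and the control, but since $\varphi$ is bounded that is enough. What each route buys: yours is more modular (classical LDP results plus a uniformity lemma) and its limsup-type estimate avoids constructing i.i.d.\ controls and the attendant LLN argument; the paper's is self-contained within the weak-convergence framework, avoids appealing to Varadhan, and directly delivers the two-sided sequential statement it reuses in Corollary \ref{cor_argmin convergence of g^n}.
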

\begin{proof}
	 Let $\{X_i\}_{i \in \mathbb{N}}$ be iid random variables with distribution $\eta$, and let $\mathcal{L}^n$  be the empirical measure in $\mathbb{R}^h$ that puts mass $1/n$ at each of the first $n$ points $X_1,\cdots, X_n$, namely $\mathcal{L}^n(dx) = \frac{1}{n}\sum_{i=1}^n \delta_{X_i}(dx)$.
From the representation established in \cite[Section 2.3]{dupuis2011weak} we have, for $\theta \in \Theta$,
\begin{equation}\label{q:inf_bar_X}
\begin{split}
 g^n(\theta) = & -\frac{1}{n}\log \mathbb{E} \exp \left\{ -n \varphi\left( \int_{\mathbb{R}^h} G(x,\theta)\mathcal{L}^n(dx) \right) \right\}= \\
	&\inf_{\bar{\nu}^n} \mathbb{E} \left[ \varphi \left( \int_{\mathbb{R}^h} G(x,\theta)\bar{\mathcal{L}}^n(dx) \right) + \frac{1}{n} \sum_{i=1}^{n}R(\bar{\nu}^n_i \| \eta) \right]
\end{split}
\end{equation}
	where the infimum is over all probability distributions $\bar{\nu}^n \in \mathscr{P}(\mathbb{R}^{nh})$, with $(\bar{X}^n_1,\dots,\bar{X}^n_n)$ being a random variable with distribution $\bar{\nu}^n$,  $\bar{\mathcal{L}}^n$ being the empirical measure in $\mathbb{R}^h$ of the $n$ points $\bar{X}^n_1,\dots,\bar{X}^n_n$, and $\bar{\nu}^n_i$ being the
	conditional distribution of $\bar{X}^n_i$ given $\bar{X}^n_1,\cdots, \bar{X}^n_{i-1}$. Since $\varphi$ is bounded, the infimum in \eqref{q:inf_bar_X} is bounded above by $\| \varphi \|_{\infty} = \sup_{y \in \mathbb{R}^m} | \varphi(y)| < \infty$. It follows that for any fixed value of $n\in \mathbb{N}$, in taking the infimum in \eqref{q:inf_bar_X} we can restrict to distributions $\bar{\nu}^n$ for which
	\begin{equation}
     \mathbb{E} \left[ \frac{1}{n} \sum_{i=1}^{n} R(\bar{\nu}^n_i \| \eta) \right] \leq 2\|\varphi\|_{\infty} + 1.
     \label{q:entropy bound}
	\end{equation}
 Under our assumption $\sup_{\theta} H^\theta_2(\alpha)<\infty$, by a standard argument (see e.g.  the proof of Lemma \ref{lemma A.1}), for any sequence $\{\bar{\nu}^n\}_{n\in \mathbb{N}} $ that satisfies \eqref{q:entropy bound} for all $n$ we see that
	\begin{equation}
	\lim_{C\rightarrow \infty} \sup_{n\in\mathbb{N}} \sup_{\theta \in \Theta}\mathbb{E} \left[ \int_{\mathbb{R}^h} \|G(x,\theta)\| 1_{\{ \|G(x,\theta)\| \geq C \}} \bar{\mathcal{L}}^n(dx) \right] = 0.
	\label{q:u.i. of bar{L} 2}
	\end{equation}

Now let $\{\theta^n\}$ be a sequence in $\Theta$ such that $\theta^n\to \theta$ as $n\to \infty$. Fix $\varepsilon > 0$ and let the sequence $\{\bar{\nu}^n\}$ satisfy
	$$-\frac{1}{n} \log \mathbb{E} e^{-n \varphi\left( \int_{\mathbb{R}^h}G(x, \theta^n)\mathcal{L}^n(dx) \right)} + \varepsilon \geq \mathbb{E}\left[ \varphi\left( \int_{\mathbb{R}^h}G(x, \theta^n)\bar{\mathcal{L}}^n(dx) \right) + \frac{1}{n}\sum_{i=1}^{n}R(\bar{\nu}^n_i \| \eta) \right]$$
as well as \eqref{q:entropy bound} for each $n$, and  define $\hat{\nu}^n \doteq \frac{1}{n}\sum_{i=1}^{n}\bar{\nu}^n_i$. Using arguments similar to Proposition 8.2.5 and  Lemma 8.2.7 in \cite{dupuis2011weak}, $\{ (\bar{\mathcal{L}}^n,\hat{\nu}^n) \}_{n \in \mathbb{N}}$ is tight. Consider a subsequence along which $(\bar{\mathcal{L}}^n,\hat{\nu}^n)$ converges weakly to $(\bar{\mathcal{L}},\hat{\nu})$.
We now argue that (along the subsequence)
\begin{equation}\label{eq:eq1009}
	\lim_{n\to\infty}\mathbb{E}\left[ \varphi\left( \int_{\mathbb{R}^h}G(x, \theta^n)\bar{\mathcal{L}}^n(dx) \right) \right] =\mathbb{E}\left[ \varphi\left( \int_{\mathbb{R}^h}G(x, \theta)\bar{\mathcal{L}}(dx) \right) \right].
\end{equation}
By appealing to Skorohod representation theorem we can
assume that $\bar{\mathcal{L}}^n(\om) \to \bar{\mathcal{L}}(\om)$ for a.e. $\om$. We have
\begin{align*}
	\mathbb{E} \left| \int_{\mathbb{R}^h}G(x, \theta^n)\bar{\mathcal{L}}^n(dx) - \int_{\mathbb{R}^h}G(x, \theta)\bar{\mathcal{L}}(dx)\right| &\le
	\mathbb{E}  \int_{\mathbb{R}^h}\left|G(x, \theta^n)- G(x,\theta)\right| \bar{\mathcal{L}}^n(dx) \\
	& \quad + \mathbb{E}\left| \int_{\mathbb{R}^h}G(x, \theta)\bar{\mathcal{L}}^n(dx) - \int_{\mathbb{R}^h}G(x, \theta)\bar{\mathcal{L}}(dx)\right|.
\end{align*}
The second term on the right side in the above display converges to zero from the continuity of $G$, \eqref{q:u.i. of bar{L} 2} and the convergence of $\bar{\mathcal{L}}^n$ to $\bar{\mathcal{L}}$. The
first term also converges to zero as follows from \eqref{q:u.i. of bar{L} 2}, the fact that the sequence $\{\bar{\mathcal{L}}^n\}$ is tight,  and that for every compact subset $K$ of $\mathbb{R}^h$,
$\sup_{x\in K}|G(x, \theta^n)- G(x,\theta)|\to 0$ as $n\to \infty$.
From the boundedness and continuity of $\varphi$ and the dominated convergence theorem we now have the convergence in \eqref{eq:eq1009}.
Consequently, we have	
	\begin{align*}
	 &\liminf_{n \rightarrow \infty} g^n(\theta^n) + \varepsilon
	 =\liminf_{n \rightarrow \infty} - \frac{1}{n} \log \mathbb{E} \exp\left\{ -n \varphi\left( \int_{\mathbb{R}^h}G(x, \theta^n)\mathcal{L}^n(dx) \right) \right\} + \varepsilon\\
	 \geq & \liminf_{n \rightarrow \infty} \mathbb{E}\left[ \varphi\left( \int_{\mathbb{R}^h}G(x, \theta^n)\bar{\mathcal{L}}^n(dx) \right) + \frac{1}{n}\sum_{i=1}^{n}R(\bar{\nu}^n_i \| \eta) \right]\\
	 \geq & \liminf_{n \rightarrow \infty} \mathbb{E}\left[ \varphi\left( \int_{\mathbb{R}^h}G(x, \theta^n)\bar{\mathcal{L}}^n(dx) \right) + R(\hat{\nu}^n \| \eta) \right]\\
	 \geq & \mathbb{E}\left[ \varphi\left( \int_{\mathbb{R}^h}G(x, \theta)\bar{\mathcal{L}}(dx) \right) + R(\hat{\nu} \| \eta) \right] \\
	 \geq & \inf_{\nu \in \mathscr{P}(\mathbb{R}^h)} \left[ \varphi\left( \int_{\mathbb{R}^h}G(x, \theta)\nu(dx) \right) + R(\nu \| \eta) \right]
	 =  g(\theta),
	\end{align*}
where the second inequality holds by Jensen's inequality and convexity of relative entropy, the third inequality follows from the convergence in distribution, Fatou's Lemma and lower semicontinuity of relative entropy, and the fourth inequality follows from the fact that $\bar{\mathcal{L}} = \hat{\nu}$ a.s., see \cite[Theorem 8.2.8]{dupuis2011weak}.
Since $\varepsilon>0$ is arbitrary, we have $\liminf g^n(\theta^n) \ge g(\theta)$.

We now consider the reverse inequality. Once more, let $\theta^n\to \theta$. We first argue that $g(\theta^n) \to g(\theta)$. Note that,   for $\theta \in \Theta$
\begin{align*}
	g(\theta) &= \inf_{\nu \in \mathscr{P}(\mathbb{R}^h)} \left[ \varphi\left( \int_{\mathbb{R}^h}G(x, \theta)\nu(dx) \right) + R(\nu \| \eta) \right]\\
	&= \inf_{\nu \in \mathscr{P}(\mathbb{R}^h): R(\nu \| \eta)\le \|\varphi\|_{\infty}} \left[ \varphi\left( \int_{\mathbb{R}^h}G(x, \theta)\nu(dx) \right) + R(\nu \| \eta) \right].
\end{align*}
Fix $\varepsilon>0$ and let $\nu^n$, $\nu^0$ be $\varepsilon$-optimal for $g(\theta^n)$ and $g(\theta)$, respectively, and such that $R(\nu^n \| \eta) \le \|\varphi\|_{\infty}$,
$R(\nu^0 \| \eta) \le \|\varphi\|_{\infty}$.
Then the sequence $\{\nu^n\}$ is tight and in a similar manner as for the proof of \eqref{q:u.i. of bar{L} 2} we have
\begin{equation}
\lim_{C\rightarrow \infty} \sup_{n\ge 0} \sup_{\theta \in \Theta} \int_{\mathbb{R}^h} \|G(x,\theta)\| 1_{\{ \|G(x,\theta)\| \geq C \}} \nu^n(dx)  = 0.
\label{q:u.i. of bar{L} 2b}
\end{equation}
In particular, as $n\to \infty$
\begin{equation}\label{eq:eq1106}
\left|\varphi\left( \int_{\mathbb{R}^h}G(x, \theta^n)\nu^n(dx) \right) - \varphi\left( \int_{\mathbb{R}^h}G(x, \theta)\nu^n(dx) \right)\right| \to 0\end{equation}
and
\begin{equation}\label{eq:eq1106b}
\left|\varphi\left( \int_{\mathbb{R}^h}G(x, \theta^n)\nu^0(dx) \right) - \varphi\left( \int_{\mathbb{R}^h}G(x, \theta)\nu^0(dx) \right)\right| \to 0.
\end{equation}
From the $\varepsilon$-optimality of $\nu^n$, we have
\begin{align*}
	\limsup_{n\to \infty} (g(\theta)-g(\theta^n)) &\le 	\limsup_{n\to \infty}\left[\varphi\left( \int_{\mathbb{R}^h}G(x, \theta)\nu^n(dx) \right) - \varphi\left( \int_{\mathbb{R}^h}G(x, \theta^n)\nu^n(dx) \right)\right] +\varepsilon \\
	&\le \varepsilon,
\end{align*}
where the second inequality follows from \eqref{eq:eq1106}. Similarly, using \eqref{eq:eq1106b} we see that 	$\limsup_{n\to \infty} (g(\theta^n)-g(\theta)) \le \varepsilon$.
Since $\varepsilon>0$ is arbitrary, we have shown that
\begin{equation}
	g(\theta^n) \to g(\theta) \mbox{ as } n \to \infty.
\label{eq:eq1109}	
\end{equation}
Next with $\varepsilon, \nu^n$ as above, define $\bar{\mathcal{L}}^n$ as the empirical measure of $\{\bar X^n_i\}_{i=1}^n$ which are iid $\nu^n$.
Using \eqref{q:u.i. of bar{L} 2b}  it can be seen that the sequence $\{\bar{\mathcal{L}}^n\}$ satisfies \eqref{q:u.i. of bar{L} 2}. Also, for every bounded
$\tilde G: \Theta\times \mathbb{R}^h \to \mathbb{R}$, as $n\to \infty$,
$$\int_{\mathbb{R}^h}\tilde G(x, \theta^n)\bar{\mathcal{L}}^n(dx)  - \int_{\mathbb{R}^h}\tilde G(x, \theta^n)\nu^n(dx) \to 0, \mbox{ in probability.}$$
Combining these two observations with the fact that $\varphi$ is continuous and bounded, we have that, as $n \to \infty$,
\begin{equation}\label{eq:eq1216}
	\delta^n \doteq \left|\mathbb{E}\left[ \varphi\left( \int_{\mathbb{R}^h}G(x, \theta^n)\bar{\mathcal{L}}^n(dx) \right)\right] -
 \varphi\left( \int_{\mathbb{R}^h}G(x, \theta^n)\nu^n(dx) \right)\right| \to 0.\end{equation}

Finally, from the representation in \eqref{q:inf_bar_X},
	\begin{align*}
	  \limsup_{n \rightarrow \infty} g^n(\theta^n)
	 &=\limsup_{n \rightarrow \infty} - \frac{1}{n} \log \mathbb{E} \exp\left\{ -n \varphi\left( \int_{\mathbb{R}^h}G(x, \theta^n)\mathcal{L}^n(dx) \right) \right\} \\
	 &\leq  \limsup_{n \rightarrow \infty} \mathbb{E}\left[ \varphi\left( \int_{\mathbb{R}^h}G(x, \theta^n)\bar{\mathcal{L}}^n(dx) \right) + \frac{1}{n}\sum_{i=1}^{n}R(\bar{\nu}^n_i \| \eta) \right]\\
	&\le \limsup_{n \rightarrow \infty} \left(\mathbb{E}\left[ \varphi\left( \int_{\mathbb{R}^h}G(x, \theta^n)\nu^n(dx) \right) + R(\nu^n \| \eta) \right] + \delta^n\right)\\
	&\le \limsup_{n \rightarrow \infty} g(\theta^n) + \varepsilon  = g(\theta) +\varepsilon,
	\end{align*}
	where the second inequality uses the fact that $\bar{\nu}^n_i = \nu^n$ for each $i$, and the third inequality uses \eqref{eq:eq1216} and the $\varepsilon$-optimality of $\nu^n$.
Since $\varepsilon$ is arbitrary, we have proved $\limsup_{n\rightarrow \infty} g^n(\theta^n) \le g(\theta)$. This completes the proof.
\end{proof}

As an immediate consequence of the above theorem we have the following corollary.

\begin{corollary}
Suppose the assumptions in Theorem \ref{t:limiting} hold.
Then
	$$\max_{\theta\in \Theta} g^n(\theta) \rightarrow \max_{\theta\in \Theta}  g(\theta),$$
	and for any choice of $\delta^n \downarrow 0$ and $\theta^n \in \delta^n-\argmax_{\theta\in \Theta} g^n$, all cluster points of the sequence $\{ \theta^n \}_{n \in \mathbb{N}}$ belong to $\argmax_{\theta\in \Theta} g$. If $\argmax_{\theta\in \Theta} g$ consists of a unique point $\theta^*$, one must actually have $\theta^n \rightarrow \theta^*$.
	\label{cor_argmin convergence of g^n}
\end{corollary}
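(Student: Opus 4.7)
The plan is to deduce the corollary as a direct consequence of the uniform convergence of $g^n$ to $g$ established in Theorem \ref{t:limiting}, using the compactness of $\Theta$. The argument is a standard one, and essentially mirrors the reasoning in \cite[Theorem 7.33]{rockafellar2009variational} or in the analogous step of Proposition \ref{p:phi_approximate_prob}; I will simply adapt it to the $\max$/$\argmax$ formulation here.

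First I would observe that $g$ is continuous on $\Theta$. This follows either as the uniform limit of the continuous functions $g^n$ (each $g^n$ is continuous in $\theta$ under the assumptions of Theorem \ref{t:limiting}), or more directly from the display $g(\theta^n) \to g(\theta)$ whenever $\theta^n \to \theta$ that was already established in the body of the proof of Theorem \ref{t:limiting} (see equation \eqref{eq:eq1109}). Since $\Theta$ is compact, $g$ attains its maximum and $\argmax_{\theta \in \Theta} g$ is nonempty.

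Next I would show convergence of the maximum values. Fix $\varepsilon > 0$; by uniform convergence choose $N$ so that $\sup_{\theta \in \Theta}|g^n(\theta) - g(\theta)| < \varepsilon$ for all $n \ge N$. Then for any such $n$, and any $\theta$,
\[
g^n(\theta) - \varepsilon \le g(\theta) \le g^n(\theta) + \varepsilon,
\]
so taking the supremum over $\theta \in \Theta$ gives $|\max g^n - \max g| \le \varepsilon$. Hence $\max g^n \to \max g$.

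Now for the cluster-point claim, let $\theta^n \in \delta^n\text{-}\argmax_{\theta\in \Theta} g^n$ with $\delta^n \downarrow 0$, and let $\bar\theta$ be a cluster point, with $\theta^{n_k} \to \bar\theta$ along some subsequence. By the triangle inequality,
\[
|g^{n_k}(\theta^{n_k}) - g(\bar\theta)| \le |g^{n_k}(\theta^{n_k}) - g(\theta^{n_k})| + |g(\theta^{n_k}) - g(\bar\theta)|,
\]
and both terms vanish as $k\to\infty$, the first by uniform convergence and the second by continuity of $g$. Combined with $g^{n_k}(\theta^{n_k}) \ge \max_\theta g^{n_k}(\theta) - \delta^{n_k}$ and the already-established convergence $\max g^{n_k} \to \max g$, this yields $g(\bar\theta) \ge \max_{\theta \in \Theta} g(\theta)$, so $\bar\theta \in \argmax_{\theta\in \Theta} g$. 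Finally, if $\argmax_{\theta\in \Theta} g = \{\theta^*\}$, then compactness of $\Theta$ forces every subsequence of $\{\theta^n\}$ to have a further subsequence converging to $\theta^*$, which is equivalent to $\theta^n \to \theta^*$. I do not anticipate any real obstacle; the only point requiring care is to invoke the correct continuity of $g$ (already contained in the proof of Theorem \ref{t:limiting}) so that $g(\theta^{n_k}) \to g(\bar\theta)$ is legitimate.
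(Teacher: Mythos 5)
Your proposal is correct and is essentially the argument the paper has in mind: the paper states the corollary as an immediate consequence of the uniform convergence in Theorem \ref{t:limiting} (together with the continuity of $g$ already contained in its proof, cf.\ \eqref{eq:eq1109}), and your elementary argument — convergence of maxima from the uniform bound, cluster points of $\delta^n$-argmax points landing in $\argmax g$ via continuity, and the sub-subsequence argument in the unique-maximizer case — is exactly the standard filling-in of that step. No gaps.
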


The function $g$ in \eqref{q:def_g} can be represented using $H^\theta_2$.
If $H^{\theta}_2(\alpha) < \infty$ for all $\theta \in \Theta$ and $\alpha \in \mathbb{R}^m$, then by Cram\'{e}r's Theorem we have
\begin{equation}\label{q:g_maxminrepresentation}
\begin{split}
g(\theta) &= \inf_{\beta \in \mathbb{R}^m} \left[ \varphi(\beta) + L^\theta_2(\beta)\right]\\
& = \inf_{\beta \in \mathbb{R}^m} \left[ \varphi(\beta) + \sup_{\alpha \in \mathbb{R}^m} \left[  \langle \alpha,\beta \rangle - H^\theta_2(\alpha)  \right]\right]\\
& = \inf_{\beta \in \mathbb{R}^m} \sup_{\alpha \in \mathbb{R}^m} \left[ \varphi(\beta) +  \langle \alpha,\beta \rangle - \log \mathbb{E}e^{\langle \alpha,G(X_1,\theta) \rangle } \right].
\end{split}
\end{equation}
With the above representation, the problem $\max_{\theta\in \Theta} g(\theta)$ can be solved as a constrained optimization problem by converting the inner max-min problem into optimality conditions. A useful feature of $g$ is that its evaluation does not involve a rare event probability and therefore does not require the use of importance sampling. In the numerical examples, we first choose a fixed function $\varphi$ to obtain the approximation problem \eqref{q:appro_phi}, then solve the limiting problem  $\max_{\theta\in \Theta} g(\theta)$ numerically.
The solution   of the latter problem is used as the starting point for solving  \eqref{q:appro_phi}.

\section{Minimization of the buffered probability}\label{s:buffered}

In this section, we consider the special case in which $F(y)=\delta_A(y)$,  $m=1$ and $A = [0,\infty)$. In such a setting, an alternative reliability measure known as the buffered failure probability or the buffered probability of exceedance (abbreviated as the {\em buffered probability} in rest of the paper) can be used in place of the standard probability. The buffered probability was introduced in \cite{rockafellar2010buffered}, which also showed how to convert optimization problems with buffered probability constraints into convex programs using a result in \cite{rockafellar2000optimization}. An extension and more properties of the buffered probability were provided in  \cite{mafusalov2014buffered}.

In general, for a continuous 1-dimensional random variable $X$, and a scalar $c \in (\mathbb{E}[X], \text{ess sup}X)$ ($\text{ess sup}X$ is the essential supremum of $X$), the buffered probability  is defined as
  $$\bar{p}_c (X)= \mathbb{P}[X>q]$$
  where $q$ is the unique solution to the equation $\mathbb{E}[X| X > q] = c$; in addition, we define $\bar{p}_c(X)=0$ for $c\ge \text{ess sup}X$ and  $\bar{p}_c(X)=1$ for $c\le \mathbb{E}[X]$. For a detailed discussion and the definition that applies to a general distribution, see \cite{mafusalov2014buffered}. A direct consequence of the above definition is that $q\le c$ and  $\mathbb{P}[X > c] \leq \bar{p}_c (X)$. It was shown in \cite{mafusalov2014buffered} that the buffered probability can be equivalently represented as
  	$$
  	\bar{p}_c(X) = \left\{
  	\begin{array}{l l}
  	0, & \text{if } c \ge \text{ess sup}X;\\
  	\min_{\lambda \geq 0}\mathbb{E}[ \lambda(X-c) + 1 ]^+, & \text{if }  c < \text{ess sup}X.
  	\end{array}
  	\right.
  	$$
  	\label{proposition: buffered}

 The following theorem gives an important connection between buffered probabilities and the large deviations rate function.
 Specifically, it shows that, under conditions, when $X$ is replaced by the the sample mean of iid random variables, the buffered probability and the corresponding ordinary probability have the same asymptotic decay rate. 
 \begin{theorem}
\label{buff-ldp}
 Let $U_i$, $i\ge 1$ be an iid sequence of $\mathbb{R}$ valued random variables, and suppose that $M(\lambda) \doteq \mathbb{E}(e^{\lambda U_1}) < \infty$ for every $\lambda \in \mathbb{R}$.
Let
 $H(\lambda) \doteq \log M(\lambda)$ for $\lambda \in \mathbb{R}$ and $L$ be the Legendre transform of $H$, and suppose that $L$ is finite on $(0,\infty)$.  Write $Y_n \doteq \frac{1}{n} \sum_{i=1}^n U_i$ for $n\ge 1$.   Then for every $c> E(U_1)$ and $\gamma \ge 0$
 $$\lim_{n\to \infty} \frac{1}{n}   \log\min_{\lambda \ge \gamma} \mathbb{E}[\lambda (Y_n-c) + 1]^+ = \lim_{n\to \infty} \frac{1}{n} \log \mathbb{P}[Y_n>c] = -L(c).$$
 \end{theorem}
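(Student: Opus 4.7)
The second equality is Cram\'er's theorem: under the standing hypotheses ($M$ finite everywhere and $L$ finite on $(0,\infty)$), $c$ lies in the interior of $\dom L$ where $L$ is continuous, so both the Chernoff upper bound and the exponential-tilting lower bound yield $\lim_n \tfrac{1}{n}\log\mathbb{P}[Y_n>c] = -L(c)$. For the first equality, write $q_n \doteq \min_{\lambda\ge\gamma}\mathbb{E}[\lambda(Y_n-c)+1]^+$ and prove matching one-sided bounds on $\frac{1}{n}\log q_n$.

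The lower bound $\liminf_n \frac{1}{n}\log q_n \ge -L(c)$ is immediate: for any $\lambda\ge 0$ and any $\omega$ with $Y_n(\omega)>c$ one has $\lambda(Y_n(\omega)-c)+1\ge 1$, so pointwise $[\lambda(Y_n-c)+1]^+ \ge \mathbf{1}_{\{Y_n>c\}}$; integrating and minimizing over $\lambda\ge\gamma\ge 0$ gives $q_n\ge\mathbb{P}[Y_n>c]$, and Cram\'er closes this direction.

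For the matching upper bound the key move is to pick a specific $\lambda_n$ that scales linearly with $n$. Let $\theta^*>0$ be the maximizer in $L(c)=\theta^* c - H(\theta^*)$; this exists and is strictly positive since $c>H'(0)=E(U_1)$, $H$ is smooth and strictly convex, and the hypothesis that $L$ is finite on $(0,\infty)$ forces $H'(\theta)\to\infty$ as $\theta\to\infty$. With $\lambda_n=n\theta^*$ (which lies in $[\gamma,\infty)$ once $n\ge\gamma/\theta^*$), I would use the tail formula $\mathbb{E}[X]^+=\int_0^\infty\mathbb{P}[X>t]\,dt$ and the change of variables $s=c+(t-1)/(n\theta^*)$ to obtain the exact identity
\begin{equation*}
\mathbb{E}[\lambda_n(Y_n-c)+1]^+ = n\theta^*\int_{c-1/(n\theta^*)}^\infty \mathbb{P}[Y_n>s]\,ds.
\end{equation*}

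Then I would apply the Chernoff bound $\mathbb{P}[Y_n>s]\le e^{-nL(s)}$ (valid throughout the integration range once $c-1/(n\theta^*)>E(U_1)$) together with the supporting-hyperplane inequality $L(s)\ge L(c)+\theta^*(s-c)$ furnished by $\theta^*\in\partial L(c)$, so that the right-hand side collapses to
\begin{equation*}
n\theta^* e^{-nL(c)}\int_{c-1/(n\theta^*)}^\infty e^{-n\theta^*(s-c)}\,ds = e\cdot e^{-nL(c)}.
\end{equation*}
Hence $\frac{1}{n}\log q_n\le -L(c)+\frac{1}{n}$, giving $\limsup_n\frac{1}{n}\log q_n\le -L(c)$ and completing the proof. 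The only subtle ingredient is the convex-analytic fact that the Legendre supremum is attained at a positive $\theta^*\in\partial L(c)$, which is precisely where the hypothesis that $L$ is finite on $(0,\infty)$ plays its role; everything else in the argument is a change of variables plus a Chernoff estimate.
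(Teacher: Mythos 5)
Your proof is correct, and while the lower bound (the pointwise inequality $[\lambda(Y_n-c)+1]^+\ge \mathbf{1}_{\{Y_n>c\}}$ plus Cram\'er) coincides with the paper's, your upper bound takes a genuinely different route. The paper keeps $\lambda$ fixed: it splits the expectation over $\{c-1/\lambda\le Y_n\le c+m\}$ and $\{Y_n>c+m\}$, kills the far tail with a tilted Chernoff estimate at the dual point of $c+m$ (chosen so that $L(c+m)>L(c)+1$), bounds the window by $(m\lambda+1)\mathbb{P}(Y_n\ge c-1/\lambda)$, and then runs an $\varepsilon$--$\delta_0$ argument with a fixed large $\lambda_0=1/\delta_0>\gamma$ before letting $\varepsilon\to0$. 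You instead pick the $n$-dependent value $\lambda_n=n\theta^*$ with $\theta^*$ the (attained, positive) dual point of $c$ itself, and use the layer-cake identity plus the single bound $\mathbb{P}[Y_n>s]\le e^{-n(\theta^*s-H(\theta^*))}=e^{-n(L(c)+\theta^*(s-c))}$ to get the clean estimate $\min_{\lambda\ge\gamma}\mathbb{E}[\lambda(Y_n-c)+1]^+\le e\cdot e^{-nL(c)}$ for all large $n$; your justification that $\theta^*$ exists and is positive (finiteness of $L$ on $(0,\infty)$ forces $\operatorname{ess\,sup}U_1=\infty$, hence $H'(\theta)\to\infty$) is sound, and is in fact the same attainment issue the paper glosses over for its dual point $\alpha^*_0$. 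What your shortcut buys is brevity: no auxiliary $m$, no $\varepsilon$, no case analysis of which term in the max dominates. What the paper's longer route buys is per-fixed-$\lambda$ information: it shows that for every fixed sufficiently large $\lambda$ (not scaling with $n$), $\frac1n\log\mathbb{E}[\lambda(Y_n-c)+1]^+$ is already within $\varepsilon$ of $-L(c)$, and the intermediate estimates (the choice of $m$, $\gamma_n=\alpha^*_0-\lambda/n$, and the bound \eqref{eq:eq417}) are recycled verbatim in the proofs of Theorem \ref{prop:asympoptbp} and Theorem \ref{thm:main2}, where $\lambda$ must stay fixed because it indexes the importance-sampling estimator. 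So your argument fully proves the stated theorem, but it would not by itself supply the uniform-in-$\lambda$ machinery the paper needs downstream; a minor cosmetic point is that $\mathbb{E}[X]^+$ should be read as $\mathbb{E}[X^+]$, consistent with the paper's notation.
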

\begin{proof}
Without loss of generality we assume that  $\mathbb{E}(U_1)=0$.  Fix $c>0$. Since for $\lambda=0$,
$\log \mathbb{E}[ \lambda(X-c) + 1 ]^+ = 0$ and $L(c)\ge 0$, it suffices to prove the result with the minimization over $\{\lambda: \lambda > \gamma\}$ for every $\gamma\ge 0$.
Note that under the assumptions of the theorem, for every $\kappa>0$
$$\liminf_{n\to \infty} \frac{1}{n} \log \mathbb{P}(Y_n > \kappa) = \limsup_{n\to \infty} \frac{1}{n} \log \mathbb{P}(Y_n \ge \kappa) = -L(\kappa).$$
For  $\lambda>0$
$$
\mathbb{E}[\lambda (Y_n-c) + 1]^+ \ge \mathbb{E}\left ( [\lambda (Y_n-c) + 1]1_{\{Y_n> c\}}\right) \ge  \mathbb{P} (Y_n > c).$$
Thus, for any $\gamma \ge 0$,
\begin{align*}
\frac{1}{n}\log \min_{\lambda >\gamma}\mathbb{E}[\lambda (Y_n-c) + 1]^+ &\ge   \frac{1}{n}\log  \mathbb{P} (Y_n > c).
\end{align*}
Taking limit as $n\to \infty$, we have
$$
\liminf_{n\to \infty} \frac{1}{n}\log \min_{\lambda >\gamma}\mathbb{E}[\lambda (Y_n-c) + 1]^+ \ge \liminf_{n\to \infty} \frac{1}{n}\log  \mathbb{P} (Y_n > c) = -L(c).$$

Now we prove the complementary inequality. Choose $m\ge 1$ such that $L(c+m) > L(c)+1$.
Note that for $\lambda>0$
\begin{align*}
\mathbb{E}[\lambda (Y_n-c) + 1]^+
&= \mathbb{E}\left([\lambda (Y_n-c) + 1] 1_{\{Y_n \ge c-1/\lambda\}}\right)\\
&= \left\{\mathbb{E}\left([\lambda (Y_n-c) + 1] 1_{\{ c-1/\lambda \le Y_n \le c+m\}}\right)
+ \mathbb{E}\left([\lambda (Y_n-c) + 1] 1_{\{ Y_n > c+m\}}\right)\right\}.
\end{align*}
Let $\alpha^*_0\in \mathbb{R}$ be the dual point to $(c+m)$, namely
\begin{equation}\label{eq:duallhcm}
	L(c+m) = \sup_{\alpha \in \mathbb{R}} [\alpha (c+m) - H(\alpha)] = \alpha^*_0 (c+m) - H(\alpha^*_0).
\end{equation}
Note that
$\alpha^*_0>0$,  since by Jensen's inequality $H(\alpha^*_0)\ge \log (e^{\alpha^*_0  \mathbb{E}(U_1)}) =0$.
Given $\lambda>0$, choose $n(\lambda) \in \mathbb{N}$ such that for all $n \ge n(\lambda)$,
$\gamma_n \doteq \alpha^*_0 - \lambda/n>0$. Then for all such $n$
\begin{align}
	\mathbb{E}\left([\lambda (Y_n-c) + 1] 1_{\{ Y_n > c+m\}}\right)
	&\le \mathbb{E}\left(e^{\lambda (Y_n-c)}e^{n\gamma_n(Y_n-c-m)}\right)\nonumber\\
	&=  e^{-\lambda c -n\gamma_n(c+m)}\mathbb{E}\left(e^{(\lambda+n\gamma_n)Y_n}\right)\nonumber\\
	&= e^{n H(\gamma_n + \lambda/n)}e^{-\lambda c -n\gamma_n(c+m)}\nonumber\\
	& = e^{n H(\alpha^*_0)}e^{-n\alpha^*_0(c+m)}   e^{\lambda m} = e^{-nL(c+m) + \lambda m}.
	\label{eq:eq417}
\end{align}
Thus
$$\frac{1}{n}\log\mathbb{E}\left([\lambda (Y_n-c) + 1] 1_{\{ Y_n > c+m\}}\right) \le -L(c+m) + \frac{\lambda m}{n}
\le -L(c) -1 + \frac{\lambda m}{n}.$$
Also, for $\lambda>0$
$$\frac{1}{n}\log \mathbb{E}\left([\lambda (Y_n-c) + 1] 1_{\{ c-1/\lambda \le Y_n \le c+m\}}\right)
\le \frac{1}{n}\log \mathbb{P}\left(Y_n\ge c - 1/\lambda\right) + \frac{\log (m\lambda+1)}{n}.$$
We now have that for all $n \ge n(\lambda)$.
$$
\frac{1}{n}\log \mathbb{E}[\lambda (Y_n-c) + 1]^+ \le \frac{\log 2}{n} + \max\left\{-L(c) -1 + \frac{\lambda m}{n}
,  \frac{\log (m\lambda+1)}{n}+ \frac{1}{n}\log \mathbb{P}\left(Y_n \ge c-\frac{1}{\lambda}\right)\right\}.$$
Let $\varepsilon>0$ be arbitrary and let  $0< \delta_0 < \min\{\frac{1}{\gamma},c\}$, $n_0 \in \mathbb{N}$, be such that for all  $n\ge n_0$
$$
 \frac{1}{n}\log \mathbb{P}\left(Y_n \ge c-\delta_0\right) \le -L(c) + \varepsilon.$$
 Let $\lambda_0 = 1/\delta_0$ and $n_1 = \max\{n_0, n(\lambda_0)\}$. Then, for $n\ge n_1$
 \begin{align*}
 \min_{\lambda >\gamma}\frac{1}{n}\log \mathbb{E}[\lambda (Y_n-c) + 1]^+ &\le \frac{1}{n}\log \mathbb{E}[\lambda_0 (Y_n-c) + 1]^+\\
&\le \frac{\log 2}{n} + \max\left\{-L(c) -1 + \frac{\lambda_0 m}{n}
,  \frac{\log(m\lambda_0+1)}{n}+ \varepsilon -L(c) \right\}.
\end{align*}
Now choose $n_2\ge n_1$ such that for all $n\ge n_2$, $\lambda_0 m/n <1$.
Then for all $n\ge n_2$
$$
\max\left\{-L(c) -1 + \frac{\lambda_0 m}{n}
,  \frac{\log(m\lambda_0+1)}{n}+ \varepsilon -L(c) \right\} = \frac{\log(m\lambda_0+1)}{n}+ \varepsilon -L(c).$$
Thus for all $n\ge n_2$
$$\min_{\lambda >\gamma}\frac{1}{n}\log \mathbb{E}[\lambda (Y_n-c) + 1]^+ \le \frac{\log(m\lambda_0+1)}{n} +\varepsilon-L(c)+\frac{\log 2}{n} .$$
Since $\varepsilon>0$ is arbitrary, we have the desired complementary inequality on first sending $n\to \infty$ and then $\varepsilon\to 0$.
\end{proof}

The above theorem suggests that the change of measure that is asymptotically optimal for importance sampling Monte-Carlo for estimating $\mathbb{P}(Y_n>c)$ may be useful for Monte-Carlo estimation
of $\min_{\lambda >\alpha}\frac{1}{n}\log \mathbb{E}[\lambda (Y_n-c) + 1]^+$ as well.
Recall that the asymptotically optimal probability measure for importance sampling  for estimating $\mathbb{P}(Y_n>c)$ with $\{Y_n\}$ as in Theorem \ref{buff-ldp}, is given as
$$\nu_{\alpha^*}(dz) \doteq e^{\alpha^* z - H(\alpha^*)} \xi(dz),$$
where $\xi$ is the probability distribution of $U_1$ and $\alpha^*$ is the conjugate dual  of $c$, namely
\begin{equation}\label{eq:duallh}
	L(c) = \sup_{\alpha \in \mathbb{R}} [\alpha c - H(\alpha)] = \alpha^* c - H(\alpha^*).
\end{equation}
We will now show that this change of measure is nearly asymptotically optimal for importance sampling estimation
of $\frac{1}{n}\log \mathbb{E}[\lambda (Y_n-c) + 1]^+$ for large values of $\lambda$.
Note that by an elementary application of Jensen's inequality, if $T_n(\lambda)$ is any unbiased estimate of $\mathbb{E}[\lambda (Y_n-c) + 1]^+$, then for any $\lambda>0$,
\begin{align*}
	\liminf_{n\to \infty} \frac{1}{n} \log \mathbb{E}(T_n^2(\lambda)) &\ge 2 \liminf_{n\to \infty} \frac{1}{n} \log \mathbb{E}[\lambda (Y_n-c) + 1]^+\\
&\ge 2 \liminf_{n\to \infty} \min_{\lambda' > 0}\frac{1}{n} \log \mathbb{E}[\lambda' (Y_n-c) + 1]^+ = -2L(c).
\end{align*}
The following result shows that this lower asymptotic bound is nearly achieved when the estimator $T_n^2(\lambda)$ is constructed using the change of measure $\nu_{\alpha^*}$ and $\lambda$ is large.
The second moment of this estimator is given as
$$R_n(\lambda) \doteq  \mathbb{E}(T_n^2(\lambda)) = \mathbb{E}\left[  \left([\lambda (Y_n-c) + 1]^+\right)^2 e^{-n\alpha^* Y_n + n H(\alpha^*)}  \right].$$
\begin{theorem}
	\label{prop:asympoptbp}
	Suppose that the conditions of Theorem \ref{buff-ldp} are satisfied. Then for every $\varepsilon >0$, there exists a $\gamma>0$ such that
	$$\sup_{\lambda \ge \gamma} \limsup_{n\to \infty} \frac{1}{n} \log R_n(\lambda) \le - 2L(c) + \varepsilon .$$
\end{theorem}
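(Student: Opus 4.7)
I decompose the expectation defining $R_n(\lambda)$ into a ``near'' piece on $\{c-1/\lambda \le Y_n \le c+m\}$ and a ``far'' piece on $\{Y_n > c+m\}$, for an arbitrary fixed $m>0$; since $[\lambda(Y_n-c)+1]^+=0$ for $Y_n<c-1/\lambda$, these pieces account for all of $R_n(\lambda)$. I follow the WLOG normalization $\mathbb{E}(U_1)=0$ used in the proof of Theorem \ref{buff-ldp}, so that $c>0$ and, since $H'(0)=0<c=H'(\alpha^*)$, the dual point $\alpha^*$ from \eqref{eq:duallh} is strictly positive.

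On the near piece, I bound $[\lambda(Y_n-c)+1]^+\le \lambda m+1$ pointwise, and use $\alpha^*>0$ together with $Y_n\ge c-1/\lambda$ to write
\[ e^{-n\alpha^* Y_n+nH(\alpha^*)} \le e^{-n\alpha^*(c-1/\lambda)+nH(\alpha^*)} = e^{n\alpha^*/\lambda-nL(c)}, \]
the last equality by \eqref{eq:duallh}. For $\lambda>1/c$ the shifted threshold $c-1/\lambda$ lies in $(0,\infty)$, where $L$ is finite, so the Chernoff bound $\mathbb{P}(Y_n\ge c-1/\lambda)\le e^{-nL(c-1/\lambda)}$ applies and yields
\[ \limsup_{n\to\infty}\tfrac{1}{n}\log[\mathrm{near}] \le -L(c)+\alpha^*/\lambda-L(c-1/\lambda). \]
Since a convex function finite on an open set is continuous there, $L(c-1/\lambda)\to L(c)$ as $\lambda\to\infty$; so the right side tends to $-2L(c)$, and there exists $\gamma_1(\varepsilon)$ such that this contribution is at most $-2L(c)+\varepsilon$ for every $\lambda\ge\gamma_1$.

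On the far piece, I invoke the elementary inequality $[x+1]^+\le e^x$ (valid for all $x\in\mathbb{R}$), so $([\lambda(Y_n-c)+1]^+)^2\le e^{2\lambda(Y_n-c)}$, bounding the far contribution by
\[ e^{-2\lambda c+nH(\alpha^*)}\,\mathbb{E}\!\left[e^{(2\lambda-n\alpha^*)Y_n}1_{\{Y_n>c+m\}}\right]. \]
For fixed $\lambda$ and $n>2\lambda/\alpha^*$ the exponent $2\lambda-n\alpha^*$ is negative, so on $\{Y_n>c+m\}$ the integrand is dominated by its value at $c+m$; combining with the Chernoff bound $\mathbb{P}(Y_n>c+m)\le e^{-nL(c+m)}$ and the Fenchel identity $H(\alpha^*)-\alpha^* c=-L(c)$ gives
\[ \limsup_{n\to\infty}\tfrac{1}{n}\log[\mathrm{far}] \le -L(c)-\alpha^* m-L(c+m). \]
Since $\alpha^*\in\partial L(c)$ (Fenchel--Young equality with $H$ differentiable), convexity of $L$ yields $L(c+m)\ge L(c)+\alpha^* m$, so the far contribution is at most $-2L(c)-2\alpha^* m<-2L(c)$, uniformly in $\lambda\ge 0$.

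Combining the two pieces via $\log(A_1+A_2)\le \log 2+\max\{\log A_1,\log A_2\}$ and choosing $\gamma=\gamma_1(\varepsilon)$ yields the claim. The delicate step is the far tail: the prefactor $\lambda(Y_n-c)+1$ grows with $\lambda$ while the importance-sampling tilt $e^{-n\alpha^* Y_n}$ is calibrated only to the threshold $c$, not to $c+m$. The majorization $[x+1]^+\le e^x$ is the device that absorbs this prefactor into an additional exponential tilt, after which a Chernoff estimate at level $c+m$ and convexity of $L$ together produce strictly more decay than $-2L(c)$ uniformly in $\lambda$, so that only the near-window contribution constrains the choice of $\gamma$.
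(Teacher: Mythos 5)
Your proof is correct and takes essentially the same route as the paper: the same decomposition of $R_n(\lambda)$ at $c-1/\lambda$ and $c+m$, the same bounds $(\lambda m+1)^2$ and $e^{-n\alpha^*(c-1/\lambda)}$ on the near window (combined with the tail estimate at $c-1/\lambda$ and continuity of $L$ at $c$), and a far tail shown to decay strictly faster than $e^{-2nL(c)}$. The only differences are in bookkeeping: you absorb the prefactor via $[x+1]^+\le e^x$ and finish with a Chernoff bound at $c+m$ plus the subgradient inequality $L(c+m)\ge L(c)+\alpha^* m$ (so any fixed $m>0$ works), while the paper uses $(1+x)^2\le 4(1+x^2/2)$, the tilt $e^{n\gamma_n(Y_n-c-m)}$ from \eqref{eq:eq417} and a specific choice of $m$ with $L(c+m)\ge L(c)+\alpha^*c+1$; both yield the same uniform bound over $\lambda\ge\gamma$.
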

\begin{proof}
	As before we assume without loss of generality that $E(U_1)=0$ and fix $c>0$.
 For any $\lambda>0$
	\begin{align}
	\frac{1}{n} \log R_n(\lambda) &= 	\frac{1}{n} \log \mathbb{E}\left[  \left([\lambda (Y_n-c) + 1]^+\right)^2 e^{-n\alpha^* Y_n + n H(\alpha^*)}  \right]\nonumber\\
	&= H(\alpha^*) + \frac{1}{n} \log \mathbb{E}\left[  \left([\lambda (Y_n-c) + 1]^+\right)^2 e^{-n\alpha^* Y_n }  \right]\nonumber\\
	&= -L(c) + \alpha^*c + \frac{1}{n} \log \mathbb{E}\left[  \left([\lambda (Y_n-c) + 1]^+\right)^2 e^{-n\alpha^* Y_n }  \right].\label{eq:1234}
	\end{align}
	Choose $m\ge 1$ such that $L(c+m) \ge L(c)+ \alpha^*c + 1$.
	Then, for $\lambda>0$,
	\begin{align*}
	\mathbb{E}\left[  \left([\lambda (Y_n-c) + 1]^+\right)^2 e^{-n\alpha^* Y_n }  \right] &= \mathbb{E}\left[  \left([\lambda (Y_n-c) + 1]^+\right)^2 e^{-n\alpha^* Y_n }1_{\{Y_n \ge c-1/\lambda\}}  \right]\\
	&= \mathbb{E}\left[  \left([\lambda (Y_n-c) + 1]^+\right)^2 e^{-n\alpha^* Y_n }1_{\{ c-1/\lambda \le Y_n \le c+m\}}  \right]\\
	&\quad + \mathbb{E}\left[  \left([\lambda (Y_n-c) + 1]^+\right)^2 e^{-n\alpha^* Y_n }1_{\{ Y_n > c+m\}}  \right].
	\end{align*}
	For the second term on the right side we have with $\gamma_n$ as in Theorem \ref{buff-ldp},
	\begin{align*}
		\mathbb{E}\left[  \left([\lambda (Y_n-c) + 1]^+\right)^2 e^{-n\alpha^* Y_n }1_{\{ Y_n > c+m\}}  \right] &\le
		4 \mathbb{E}\left[\left(1 + \frac{(\lambda (Y_n-c))^2}{2}\right)1_{\{ Y_n > c+m\}}\right]\\
		&\le 4 \mathbb{E}\left(e^{\lambda (Y_n-c)}e^{n\gamma_n(Y_n-c-m)}\right)
		\end{align*}
		where the first inequality is a consequence of the inequality $(1+x)^2 \le 4(1+ \frac{x^2}{2})$ and the observation that
		$\alpha^*\ge 0$.	
	
		
		Therefore from \eqref{eq:eq417}, for all $n \ge n(\lambda)$, where $n(\lambda)$ is as in Theorem \ref{buff-ldp},
		\begin{align*}
			 \frac{1}{n} \log \mathbb{E}\left[  \left([\lambda (Y_n-c) + 1]^+\right)^2 e^{-n\alpha^* Y_n }1_{\{ Y_n > c\}}  \right]
		&\le -L(c+m) + \frac{\lambda m}{n} + \frac{\log 4}{n}\\
		&\le -L(c) - \alpha^*c -1 + \frac{\lambda m}{n} + \frac{\log 4}{n}.
	\end{align*}
		Next,
		\begin{align*}
		&\frac{1}{n} \log\mathbb{E}\left[  \left([\lambda (Y_n-c) + 1]^+\right)^2 e^{-n\alpha^* Y_n }1_{\{ c-1/\lambda \le Y_n \le c+m\}}  \right]\\
		&\le -\alpha^*(c - \frac{1}{\lambda}) + \frac{1}{n} \log \mathbb{P} (Y_n > c-1/\lambda) + \frac{2\log(1+m\lambda)}{n}.
	\end{align*}
		Therefore, for all $n \ge n(\lambda)$
		\begin{align*}
			&\frac{1}{n} \log \mathbb{E}\left[  \left([\lambda (Y_n-c) + 1]^+\right)^2 e^{-n\alpha^* Y_n }  \right]\\
		&\quad \le \frac{\log 2}{n} + \max \Big\{
	-L(c) - \alpha^*c -1 + \frac{\lambda m+\log 4}{n},\\
	&\quad\quad\quad -\alpha^*(c - \frac{1}{\lambda}) + \frac{1}{n} \log \mathbb{P} (Y_n > c-1/\lambda)+\frac{2\log(1+m\lambda)}{n}\Big\}.\end{align*}
		
			Fix $\varepsilon >0$ and let $0< \delta_0 \le c$ and $n_0 \in \mathbb{N}$ be such that for all $n\ge n_0$
			$$\frac{1}{n}\log \mathbb{P}\left(Y_n \ge c-\delta_0\right) \le -L(c) + \frac{\varepsilon}{2}.$$
			Then for all $n\ge n_0$ and $\delta<\delta_0$
			$$\frac{1}{n}\log \mathbb{P}\left(Y_n \ge c-\delta\right) \le \frac{1}{n}\log \mathbb{P}\left(Y_n \ge c-\delta_0\right) \le -L(c) + \frac{\varepsilon}{2}.$$
			Let $\gamma \doteq \max\{ \frac{1}{\delta_0}, \frac{2\alpha^*}{\varepsilon}\}$. Then for every $\lambda \ge \gamma$ and $n \ge \max\{n_0, n(\lambda)\}$
			\begin{align*}
				&\frac{1}{n} \log \mathbb{E}\left[  \left([\lambda (Y_n-c) + 1]^+\right)^2 e^{-n\alpha^* Y_n }  \right] \\
				&\le 	\frac{\log 2}{n} + \max \left\{-L(c) - \alpha^*c -1 + \frac{\lambda m+\log 4}{n}, -L(c)-\alpha^*c + \varepsilon+\frac{2\log(1+m\lambda)}{n}\right\}.
			\end{align*}
			Choose $n_1\ge n_0$ such that $\frac{\lambda m+\log 4}{n_1}<1$. Then for $n\ge \max\{n_1, n(\lambda)\}$ the maximum on the right side equals
			$$ -L(c)-\alpha^*c + \varepsilon+\frac{2\log(1+m\lambda)}{n}.$$
			Combining the above with \eqref{eq:1234}, for every $\lambda\ge \gamma$
		$$ 	\limsup_{n\to\infty}\frac{1}{n} \log R_n(\lambda) \le -L(c)+ \alpha^*c -L(c) -\alpha^*c + \varepsilon = -2L(c)+\varepsilon.$$
		The result follows.
\end{proof}

We now return to our main optimization problem.
Replacing the probability in \eqref{q:min_probability} with the corresponding buffered probability for the random variable $Y_n=\frac{1}{n}\sum_{i=1}^{n} G(X_i,\theta)$, and assuming $c=0 < \text{ess sup} Y_n$, we obtain the following problem:
 \begin{equation}
 \inf_{\lambda\geq 0,\theta \in \Theta} \mathbb{E}\left[ \lambda\left(\frac{1}{n}\sum_{i=1}^{n} G(X_i,\theta)-c\right)+1 \right]^+.
 \label{problem: buffered with lambda}
 \end{equation}
As discussed below Theorem \ref{thm:main2}, the above optimization problem has some appealing features.
We now present a result that makes connections between a change of measure used for solving the minimization problem   in \eqref{q:min_probability} and the minimization problem for the corresponding buffered probability, namely the problem in \eqref{problem: buffered with lambda}. For this result we recall
the definition of a subsolution of \eqref{q:isaccs_H} and the associated generalized subsolution/control, given in Section \ref{ss:change_on_X}. We will use the notation and setting of Section \ref{ss:change_on_X}
but here $m=1$ and $F(y)= \infty 1_{(-\infty, c]}(y)$. The following is the main theorem which gives the same lower bound on the exponential decay rate of the second moment of the estimator
for $\mathbb{E}[\lambda (Y_n-c) + 1]^+$ as was obtained in Theorem \ref{t:main}. Proof is given in the appendix.
\begin{theorem}\label{thm:main2}
	Let $c>0$. Assume that $H(a,\alpha)<\infty$ for all $(a,\alpha)\in \mathbb{R}^{n+1}$, and that $(\bar W, \{\rho_k, \bar a_k\}_{k=1}^K)$ is a generalized subsolution/control to \eqref{q:isaccs_H} with
	$\bar W(y,1) <0$ for all $y\ge c$. Let $\{\bar{X}^n_j\}_{1\le j\le n}$ and $\{\bar{Y}^n_j\}_{0\le j\le n}$ be as defined above Theorem \ref{t:main}.
For $\lambda>0$, define
	$Z^n(\lambda) \doteq [\lambda (\bar{Y}^n_n-c) + 1]^+ \bar\Upsilon^n$, where
	$$\bar\Upsilon^n\doteq \prod_{j=0}^{n-1}\left[\sum_{k=1}^{K}\rho_k(\bar{Y}^n_j,j/n)e^{\langle \bar{a}_k(\bar{Y}^n_j,j/n),\bar{X}^n_{j+1} \rangle - H_1(\bar{a}_k(\bar{Y}^n_j,j/n))}\right]^{-1}.$$
	Then $Z^n(\lambda)$ is an unbiased estimator of $\mathbb{E}[\lambda (Y_n-c) + 1]^+$ and there exists a $\gamma>0$ such that
	$$\sup_{\lambda \ge \gamma} \limsup_{n\to \infty} \frac{1}{n} \log \mathbb{E}[Z^n(\lambda)]^2 \le - \bar W(0,0).$$
\end{theorem}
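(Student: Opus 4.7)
The plan is to split the argument into (i) unbiasedness and (ii) the asymptotic decay rate of the second moment, following the template of Theorem \ref{t:main}.

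Part (i) is the same Radon--Nikodym identity used for $Z^n$ in \eqref{q:Z^n_H}: by construction, $\bar\Upsilon^n$ is the Radon--Nikodym derivative of the original joint law of $(X_1,\ldots,X_n)$ with respect to the sampling law of $(\bar X^n_1,\ldots,\bar X^n_n)$, so a direct change of measure yields $\mathbb{E}[Z^n(\lambda)] = \mathbb{E}[\lambda(Y_n - c)+1]^+$.

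For part (ii), the key input is the master inequality behind the proof of Theorem \ref{t:main}, namely
$$\mathbb{E}\bigl[e^{-n\bar W(\bar Y^n_n,1)}(\bar\Upsilon^n)^2\bigr] \le e^{-n\bar W(0,0)+o(n)}, \qquad (\star)$$
obtained by combining the generalized subsolution inequality \eqref{q:def_gen_sub_H} with a discrete supermartingale argument. I would then bound $\mathbb{E}[Z^n(\lambda)^2]$ by combining $(\star)$ with the decomposition $[\lambda(y-c)+1]^+ \le 1_{\{c-1/\lambda < y \le c\}} + (1+\lambda(y-c))\,1_{\{y > c\}}$. The continuity of $\bar W(\cdot,1)$ at $c$ together with $\bar W(c,1)<0$ supplies a $\delta_0 > 0$ with $\bar W(y,1) \le \bar W(c,1)/2 < 0$ on $[c-\delta_0, c+\delta_0]$; setting $\gamma \doteq 1/\delta_0$, for every $\lambda \ge \gamma$ the contribution from $\{c-1/\lambda < \bar Y^n_n \le c\}$ decays strictly faster than $\bar W(0,0)$, by applying $(\star)$ through $1_{\{c-1/\lambda < \bar Y^n_n \le c\}} \le e^{n\bar W(c,1)/2}\,e^{-n\bar W(\bar Y^n_n, 1)}$. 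The ``constant'' part $\mathbb{E}[1_{\{\bar Y^n_n > c\}}(\bar\Upsilon^n)^2]$ on $\{y > c\}$ is controlled by $(\star)$ directly, since $\bar W(y,1) < 0$ pointwise there forces $1 \le e^{-n\bar W(\bar Y^n_n,1)}$.

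The main obstacle is controlling the growing contribution $\mathbb{E}[(\bar Y^n_n - c)^k 1_{\{\bar Y^n_n > c\}}(\bar\Upsilon^n)^2]$ for $k=1,2$ that appears when one expands $(1+\lambda(y-c))^2$. The natural route is to dominate $(y-c)^k$ pointwise by $C_k\alpha^{-k}e^{\alpha(y-c)}$ on $\{y > c\}$ (any $\alpha > 0$) and then bound the exponentially weighted moment $\mathbb{E}[e^{\alpha(\bar Y^n_n - c)}(\bar\Upsilon^n)^2]$. This last bound does \emph{not} follow from $(\star)$ by a pointwise comparison, because $\alpha(y-c) + n\bar W(y,1)$ need not be uniformly non-positive as $y \to \infty$ (the subsolution $\bar W(\cdot,1)$ is only required to be strictly negative pointwise on $[c,\infty)$, not uniformly bounded away from $0$). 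The remedy I would pursue is to rerun the supermartingale argument behind $(\star)$ with the weight $e^{\alpha(\bar Y^n_n - c)}$ folded into the terminal factor, absorbing the extra drift into the per-step conditional expectation via the assumption $H(a,\alpha) < \infty$ for all $(a,\alpha)$ and the uniform boundedness of the $\bar a_k$'s from Definition \ref{def_gen_sub}(iv); the same finiteness assumption yields a Chernoff bound on the sampling-law tail $\mathbb{P}^*[\bar Y^n_n > c+M]$ that annihilates the net tail contribution as $M \to \infty$. Combining the three estimates then gives $\limsup_n \tfrac{1}{n}\log \mathbb{E}[Z^n(\lambda)^2] \le -\bar W(0,0)$ uniformly for $\lambda \ge \gamma$, which is the desired conclusion.
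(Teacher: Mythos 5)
Your overall architecture is right and matches the paper's: unbiasedness is the usual Radon--Nikodym identity, the near-$c$ contribution is absorbed into the quantity $\tilde V^n=\mathbb{E}\bigl[e^{-n\bar W(Y_n,1)}\tilde\Upsilon^n\bigr]$ whose decay rate $\bar W(0,0)$ is exactly what the proof of Theorem \ref{t:main} establishes (your $(\star)$), and $\gamma$ is chosen from the continuity of $\bar W(\cdot,1)$ at $c$ so that $\bar W(y,1)\le 0$ for $y\ge c-1/\gamma$. You also correctly identify the one genuinely hard point: the factor $(1+\lambda(Y_n-c))^2$ grows on $\{Y_n>c\}$ while $\bar W(\cdot,1)$ is only pointwise negative there, not uniformly so. But your resolution of that point is not a proof. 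The first remedy (dominate $(y-c)^k$ by $e^{\alpha(y-c)}$ and ``absorb the extra drift into the per-step conditional expectation'') does not close as stated: absorbing an exponential weight step by step via finiteness of $H$ only produces a bound of the form $e^{nc_3}$ for some constant $c_3$ with no relation to $-\bar W(0,0)$; to make this idea work one would have to perturb the subsolution itself by the $O(1/n)$ linear term $-\alpha(y-c)/n$ and re-verify the generalized subsolution property up to $o(1)$ errors, none of which is in your sketch. The second remedy (a Chernoff bound on the sampling-law tail beyond $c+M$) is the right ingredient but is left disconnected from the first, and it is not explained how the likelihood-ratio factor $(\bar\Upsilon^n)^2$ is decoupled from the tail event.

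The paper's actual argument avoids your obstacle by truncating \emph{before} the polynomial growth becomes an issue: it splits at $c+m$ rather than at $c$. On $\{c-1/\lambda\le Y_n\le c+m\}$ the factor $([\lambda(Y_n-c)+1]^+)^2$ is bounded by the constant $(\lambda m+1)^2$, so this piece is at most $(\lambda m+1)^2\,\tilde V^n$ and decays at rate $\bar W(0,0)$. On $\{Y_n>c+m\}$ it applies Cauchy--Schwarz under the \emph{original} measure to separate $\mathbb{E}\bigl[(\Upsilon^n)^2\bigr]^{1/2}\le e^{nc_1}$ (finiteness of $H$ plus boundedness of $\rho_k,\bar a_k$, after the Jensen bound $\Upsilon^n\le\tilde\Upsilon^n$) from $\mathbb{E}\bigl[([\lambda(Y_n-c)+1]^+)^4 1_{\{Y_n>c+m\}}\bigr]^{1/2}$, which is controlled by the Chernoff-type estimate \eqref{eq:eq417} at level $c+m$, yielding rate $-L(c+m)/2+c_1$; one then fixes $m$ with $L(c+m)/2\ge\bar W(0,0)+1+c_1$ (possible since $L(x)/x\to\infty$ when $H_2$ is finite everywhere). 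If you reorganize your decomposition this way, the ``main obstacle'' you flagged disappears and no modification of the subsolution machinery is needed.
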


Suppose $c=0 < \text{ess sup} Y_n$ and suppose further that $G(x,\theta)$ can be decomposed as
 \[G(x,\theta) = G_1(x,\theta) + G_2(x),
  \]
  where $G_1$ is positively homogeneous, i.e., $G_1(\lambda x, \lambda\theta) = \lambda G_1(x,\theta)$ for $\lambda \geq 0$. Then (\ref{problem: buffered with lambda}) can be rewritten as
  \begin{align}
 &\inf_{\lambda\geq 0,\theta \in \Theta} \mathbb{E}\left[ \frac{\lambda}{n}\sum_{i=1}^{n} G_1(X_i,\theta)+ \frac{\lambda}{n}\sum_{i=1}^{n} G_2(X_i)+1 \right]^+ \nonumber \\
 &\quad= \inf_{\lambda\geq 0,\bar{\theta}\in \lambda\Theta}  \mathbb{E}\left[ \frac{1}{n}\sum_{i=1}^{n} G_1(\lambda X_i,\bar{\theta})+ \frac{\lambda}{n}\sum_{i=1}^{n} G_2(X_i)+1 \right]^+.
 \label{problem: convex buffered}
 \end{align}
If $\Theta$ is a convex set and $G_1$ is convex in $(x,\theta)$, the above minimization is a convex problem  with variables $\lambda$ and $\bar{\theta}$. The above problem is convex and can be solved with well studied methods such as the gradient descent method. We will exploit this convexity property in Section \ref{s:computation} where we study some numerical examples.
%
%

\section{Computational experiments}\label{s:computation}

 In the numerical experiments we consider, the problems of interest are of the form \eqref{q:min_probability} with $A=\mathbb{R}^m_+$ and $\Theta$  a compact, convex set. We approximate the problem by \eqref{q:appro_phi}, in which $\varphi: \mathbb{R}^m \rightarrow \mathbb{R}$ is defined as
 \begin{equation}\label{q:def_phi}
 \varphi(y) = \Lambda \min(\| \min(y,0) \|^2_2, \varepsilon^2), \ y\in \mathbb{R}^m
 \end{equation}
 with $\varepsilon>0$ and $\Lambda>0$ being fixed parameters. Here $\min(y,0)$ stands for the $m$ dimensional vector whose $i$th component equals $\min(y_i,0)$. The function $\varphi$  is a bounded, Lipschitz continuous (and hence a.e. differentiable) function. It can be written as the pointwise minimum $\varphi_1\wedge\varphi_2$ of the constant function $\varphi_1(y) \equiv \Lambda\varepsilon^2$ and the continuously differentiable convex function
 $  \varphi_2(y) = \Lambda\| \min(y,0) \|^2_2$.

As noted below \eqref{q:def_g}, the problem \eqref{q:appro_phi} is equivalent to
\begin{equation}\label{q:max_gn}
\max_{\theta\in \Theta} g^n(\theta),
\end{equation}
 where $g^n$ is defined in \eqref{q:def_gn}. The latter  converges to
\begin{equation}\label{q:max_g}
 \max_{\theta\in \Theta} g(\theta)
\end{equation}
as $n\to \infty$, as shown in Corollary \ref{cor_argmin convergence of g^n}. In view of this convergence, before solving \eqref{q:max_gn} we solve the limiting problem \eqref{q:max_g} in order to find an initial point for solving \eqref{q:max_gn}. This limiting problem is discussed in Section \ref{ss:limiting}. We then apply a gradient ascent method to \eqref{q:max_gn}, in which we make use of the importance sampling techniques from Section \ref{ss:change_on_X} to estimate the objective function and its gradient. Section \ref{ss:importance sampling} provides details on implementing importance sampling techniques in the algorithm. In Section \ref{ss:comp_buffered}, the function $G$ is from $\mathbb{R}^h\times\Theta$ to $\mathbb{R}$ (i.e., $m=1$) and has a special form such that the minimization of the corresponding buffered probability can be written in the form of \eqref{problem: convex buffered}. For this specific function $G$, we solve both the buffered probability problem \eqref{problem: convex buffered} and the optimization problem \eqref{q:max_gn} (equivalently \eqref{q:appro_phi}). Section \ref{ss:numerical} summarizes the results of our numerical study.

 \subsection{Reformulation and solution of the limiting problem}\label{ss:limiting}

To solve \eqref{q:max_g}, we reformulate it as a constrained optimization problem. As before we assume that $H_2^{\theta}(\alpha)<\infty$ for all $\theta \in \Theta$ and all $\alpha \in \mathbb{R}^m$.
Recall the representation of $g$ in \eqref{q:g_maxminrepresentation}
and note that $L^\theta_2(\beta)\ge0$ for all $\beta \in \mathbb{R}^m$ and $\theta \in \Theta$. Suppose
\[
\sup_{\theta\in \Theta} \inf_{\beta \ge 0} L^\theta_2 (\beta) < \infty.
\]
Then, by choosing the parameters $\Lambda$ and $\epsilon$ in the definition of $\varphi$ in \eqref{q:def_phi} to satisfy $\Lambda\epsilon^2 \ge \sup_{\theta\in \Theta} \inf_{\beta \ge 0} L^\theta_2 (\beta) $,  for each $\theta\in \Theta$ and $\beta\in \mathbb{R}^m$
we have
\begin{equation}\label{q:phi1_phi2}
\varphi_1(\beta) + L^\theta_2(\beta) \ge \Lambda \epsilon^2  \ge \inf_{\beta' \ge 0} L^\theta_2(\beta') = \inf_{\beta' \ge 0} (L^\theta_2(\beta')+\varphi_2(\beta')),
\end{equation}
where the first inequality holds because $\varphi_1\equiv \Lambda\epsilon^2$ and $L^\theta_2(\beta) \ge 0$, and the last equality holds because $\varphi_2(\beta)=0$ for $\beta\ge 0$. Consequently, for any $\theta \in \Theta$ we have  from \eqref{q:g_maxminrepresentation} and \eqref{q:phi1_phi2}
\begin{equation}\label{q:g_theta}
\begin{split}
g(\theta)=
\inf_{\beta \in \mathbb{R}^m} (\varphi(\beta)+ L^\theta_2(\beta)) = \inf_{\beta \in \mathbb{R}^m} (\varphi_2(\beta)+ L^\theta_2(\beta))\\
=  \inf_{\beta \in \mathbb{R}^m} \sup_{\alpha \in \mathbb{R}^m} \left[ \varphi_2(\beta) +  \langle \alpha,\beta \rangle - \log \mathbb{E}e^{\langle \alpha,G(X_1,\theta) \rangle } \right].
\end{split}
\end{equation}

For each $\theta \in \Theta$ define a function $\Phi^\theta:\mathbb{R}^m\times \mathbb{R}^m \to \mathbb{R}$ as
\begin{equation}\label{q:def_Phi}
\Phi^\theta(\alpha,\beta) = \varphi_2(\beta) +  \langle \alpha,\beta \rangle -\log \mathbb{E}e^{\langle \alpha,G(X_1,\theta) \rangle }.
\end{equation}
It is clear that $\Phi^\theta$ is a continuous function and is convex with respect to $\beta$ and concave with respect to $\alpha$. The following proposition gives the existence of saddlepoints of $\Phi^\theta$. We use $S_\theta\subset \mathbb{R}^m$ to denote the support of the random variable $G(X_1,\theta)$, i.e., the smallest closed set in $\mathbb{R}^m$ such that $\mathbb{P}(G(X_1,\theta)\in S_\theta) = 1$. We then use $\text{cc} S_\theta$ to denote the closed convex hull of $S_\theta$.
Recall that we assume that,  for each $\theta\in \Theta$,  $H^\theta_2(\alpha)<\infty$  for all $\alpha\in \mathbb{R}^m$.

\begin{proposition}\label{p:saddlepoint}
 Suppose that $\text{cc} S_\theta$ has a nonempty interior. Then for each $\theta\in \Theta$ the set of saddle points of $\Phi^\theta$ is nonempty and compact.
\end{proposition}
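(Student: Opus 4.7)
The plan is to construct saddle points of $\Phi^\theta$ by a primal-dual pairing: find a minimizer $\beta^*$ of the max-value $h(\beta)\doteq \sup_\alpha \Phi^\theta(\alpha,\beta)=\varphi_2(\beta)+L_2^\theta(\beta)$, then read off the companion $\alpha^*$ from the Fenchel subdifferential sum rule, and finally control compactness by analyzing the dual objective $\psi(\alpha)\doteq\inf_\beta \Phi^\theta(\alpha,\beta)=-H_2^\theta(\alpha)-\varphi_2^*(-\alpha)$.

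For existence, the finiteness of $H_2^\theta$ on all of $\mathbb{R}^m$ makes $L_2^\theta$ a closed proper convex function that is superlinear, i.e., $L_2^\theta(\beta)/\|\beta\|\to\infty$ as $\|\beta\|\to\infty$; adding the nonnegative, continuous, convex $\varphi_2$ preserves these properties, and so $h$ is proper, lower semicontinuous, and coercive. Hence $\argmin h$ is nonempty and compact. Fix $\beta^*\in\argmin h$. The hypothesis $\Int(\text{cc}\, S_\theta)\ne\emptyset$ coincides with $\ri(\dom L_2^\theta)\ne\emptyset$, and together with $\dom \varphi_2=\mathbb{R}^m$ it provides the classical constraint qualification for Rockafellar's subdifferential sum rule, yielding $\partial h=\partial\varphi_2+\partial L_2^\theta$. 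The optimality condition $0\in\partial h(\beta^*)$ therefore produces $\alpha^*$ with $\alpha^*\in\partial L_2^\theta(\beta^*)$ and $-\alpha^*\in\partial\varphi_2(\beta^*)$. Since $H_2^\theta$ is smooth (all exponential moments exist), the Fenchel-Young identity turns the first inclusion into $\beta^*=\nabla H_2^\theta(\alpha^*)$, which is the first-order (and hence, by concavity, sufficient) condition for $\alpha^*$ to maximize $\alpha\mapsto \Phi^\theta(\alpha,\beta^*)$; the second inclusion is similarly the first-order condition for $\beta^*$ to minimize the convex map $\beta\mapsto \Phi^\theta(\alpha^*,\beta)$. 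Together these identify $(\alpha^*,\beta^*)$ as a saddle point.

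For compactness, the saddle set is closed by continuity of $\Phi^\theta$ on $\mathbb{R}^m\times\mathbb{R}^m$. Its $\beta$-projection sits inside the compact set $\argmin h$, and its $\alpha$-projection sits inside $\argmax \psi$ (this inclusion follows from strong duality, which holds automatically at any saddle point). A direct computation from $\varphi_2(y)=\Lambda\|\min(y,0)\|_2^2$ gives $\varphi_2^*(-\alpha)=\|\alpha\|_2^2/(4\Lambda)$ when $\alpha\ge 0$ componentwise and $+\infty$ otherwise; combined with the Jensen bound $H_2^\theta(\alpha)\ge\langle\alpha,\mathbb{E}G(X_1,\theta)\rangle$, this shows $\psi(\alpha)\to-\infty$ quadratically as $\|\alpha\|\to\infty$ within $\{\alpha\ge 0\}$, so $\argmax \psi$ is compact and hence so is the saddle set. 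I expect the main obstacle to be justifying the subdifferential sum rule at $\beta^*$; this is precisely where the nonempty-interior hypothesis on $\text{cc}\, S_\theta$ is essential, because otherwise $\partial L_2^\theta(\beta^*)$ could be empty and there would be no mechanism to recover the required $\alpha^*$ from the primal optimum alone.
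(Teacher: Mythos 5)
Your proof is correct, but it proceeds by a genuinely different route than the paper. The paper does not construct a saddle point at all: it invokes the minimax existence result \cite[Proposition 5.5.7]{bertsekas2009convex}, which requires exhibiting one $\bar\alpha$ and one $\bar\beta$ for which the sets $\{\beta : \Phi^\theta(\bar\alpha,\beta)\le\bar\gamma\}$ and $\{\alpha : \Phi^\theta(\alpha,\bar\beta)\ge\bar\gamma\}$ are nonempty and compact; the first is handled by a recession-function computation with $\bar\alpha>0$, and the second by taking $\bar\beta\in\Int(\text{cc}\,S_\theta)$ and appealing to the compactness of level sets of the log-moment generating function of $G(X_1,\theta)-\bar\beta$ (the argument from \cite[Theorem VIII.4.3]{ellis2007entropy}) --- this is precisely where the nonempty-interior hypothesis enters in the paper. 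Your approach instead builds the saddle point explicitly: coercivity of $h=\varphi_2+L_2^\theta$ (via superlinearity of $L_2^\theta$, a consequence of $H_2^\theta$ being finite everywhere) gives a compact nonempty $\argmin h$, the Moreau--Rockafellar sum rule and Fenchel--Young produce the companion $\alpha^*=-\nabla\varphi_2(\beta^*)=\nabla L_2^\theta$-dual point, and compactness follows from the downward coercivity of $\psi(\alpha)=-H_2^\theta(\alpha)-\varphi_2^*(-\alpha)$ together with $\argmin h$ being compact. This buys more explicit information (the saddle points are exactly the pairs satisfying \eqref{problem: limiting problem with partial derivative}, which is what the paper ultimately wants anyway) at the cost of more convex-analytic machinery. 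One misattribution worth correcting: the subdifferential sum rule does \emph{not} need $\Int(\text{cc}\,S_\theta)\ne\emptyset$ --- it holds because $\varphi_2$ is finite and continuous on all of $\mathbb{R}^m$ while $\dom L_2^\theta\ne\emptyset$ (and $\ri$ of any nonempty convex set is automatically nonempty, so the condition you cite is vacuous). In fact, as written your argument never genuinely uses the interior hypothesis, so it establishes a slightly stronger statement; this is not a gap, but you should either drop the claim that the hypothesis is ``essential'' at that step or note honestly that your proof dispenses with it, whereas the paper's proof relies on it.
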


\begin{proof}
Fix $\theta \in \Theta$. By \cite[Proposition 5.5.7]{bertsekas2009convex}, it suffices to show that for some $\bar{\alpha}\in \mathbb{R}^m$, $\bar{\beta}\in \mathbb{R}^m$ and $\bar{\gamma}\in \mathbb{R}$, the sets
\begin{equation}\label{q:level_sets}
\{\alpha \in \mathbb{R}^m \mid \Phi^\theta(\alpha, \bar{\beta}) \ge \bar{\gamma}\} \text{ and } \{\beta \in \mathbb{R}^m \mid \Phi^\theta(\bar{\alpha}, \beta) \le \bar{\gamma}\}
\end{equation}
are nonempty and compact.

First, choose $\bar{\alpha} > 0$, and we show that the level sets of $\Phi^\theta(\bar{\alpha}, \cdot)$ (namely sets of the form  $ \{\beta \in \mathbb{R}^m \mid \Phi^\theta(\bar{\alpha}, \beta) \le \bar{\gamma}\}$
for $\bar \gamma \in \mathbb{R}$) are compact. It is not hard to check that the recession function of $\Phi^\theta(\bar{\alpha}, \cdot)$ evaluated at a direction $d \in \mathbb{R}^m$ takes the value of $\langle \bar{\alpha}, d \rangle$ for $d \ge 0$ and $\infty$ for all other $d$. The recession function is nonpositive only at $d=0$. By \cite[Propositions 1.4.5-1.4.6]{bertsekas2009convex}, all level sets of $\Phi^\theta(\bar{\alpha}, \cdot)$ are compact.

Second, choose $\bar{\beta}$ from the interior of $\text{cc} S_\theta$; then 0 belongs to the interior of $\text{cc} (S_\theta - \bar{\beta})$, where $S_\theta - \bar{\beta}$ is the support of the random variable $G(X_1, \theta) - \bar{\beta}$. As shown in Step 3 of the proof of \cite[Theorem VIII.4.3]{ellis2007entropy}, the level sets of the log-moment generating function of $G(X_1, \theta) - \bar{\beta}$ are all compact, which are exactly sets of the form $\{\alpha \in \mathbb{R}^m \mid \Phi^\theta(\alpha, \bar{\beta}) \ge \bar{\gamma}\}$.

We have so far shown that the sets \eqref{q:level_sets} are compact for all $\bar{\gamma}\in \mathbb{R}$. By choosing $\bar{\gamma}$ to be sufficiently large, these sets are also nonempty.
\end{proof}

When saddle points of $\Phi^\theta$ exist, they provide solutions to the outer minimization and  inner maximization problems of $\inf_\beta \sup_\alpha \Phi^\theta(\alpha,\beta)$.  When $\Phi^\theta$ is differentiable, saddle points of $\Phi^\theta$ can be further characterized by points where the partial derivatives vanish, which means for each fixed $\theta$ the solution to $\inf_{\beta\in \mathbb{R}^m} \sup_{\alpha \in \mathbb{R}^m} \Phi^{\theta}(\alpha,\beta)$ is the solution to the following equations
$$\triangledown \varphi_2(\beta) + \alpha = 0$$
$$	\beta - \triangledown_{\alpha} \log \mathbb{E}e^{\langle \alpha,G(X_1,\theta)\rangle} = 0.$$
So \eqref{q:max_g} can be written as
\begin{equation}
\begin{split}
\max_{\theta\in\Theta,\alpha\in\mathbb{R}^m,\beta\in \mathbb{R}^m}\quad & \Phi^\theta(\alpha,\beta)= \varphi_2(\beta) +  \langle \alpha,\beta \rangle -\log \mathbb{E}e^{\langle \alpha,G(X_1,\theta) \rangle }\\
\textup{s.t.}\quad & \mathrm{E}[e^{\langle\alpha,G(X_1,\theta)\rangle}]\beta = \mathrm{E}[G(X_1,\theta)e^{\langle\alpha,G(X_1,\theta)\rangle}],\\
& 2\Lambda\min(\beta,0)+\alpha=0.
\end{split}
\label{problem: limiting problem with partial derivative}
\end{equation}

With the equality constraints the above problem is nonconvex, but it has a favorable feature that evaluating the expected values in the objective function and the constraints does not necessitate the use of importance sampling. In our numerical examples, we replace the expected values by a numerical quadrature or a sample average approximation when the numerical quadrature is not available, and solve the problem with the interior point method to find a local minimum, see \cite{byrd2000trust,byrd1999interior,waltz2006interior}.

\subsection{Implementing importance sampling in the gradient method}\label{ss:importance sampling}

In the numerical examples, $X_i$ is a normal random variable and the function $G(x,\theta)$ is piecewise linear in $(x,\theta)$. Then $\exp \left\{ -n\varphi\left( \frac{1}{n}\sum_{i=1}^{n}G(X_i,\theta) \right) \right\}$ is Lipschitz continuous in $\theta$ (with a Lipschitz constant that is uniform over values of $X_i$'s), and is thus almost everywhere differentiable with respect to $\theta$ for any fixed $X_i$'s. By an application of \cite[Theorem 7.49]{shapiro2009lectures}, the gradient of $g^n$ is given as
\begin{equation}
\triangledown g^n(\theta) = \frac{\mathbb{E}\left[  \exp \left\{ -n\varphi\left( \frac{1}{n}\sum_{i=1}^{n}G(X_i,\theta) \right) \right\} \triangledown_{\theta}\left[\varphi(\frac{1}{n}\sum_{i=1}^{n}G(X_i,\theta))\right] \right]}{ \mathbb{E}\left[  \exp \left\{ -n\varphi\left( \frac{1}{n}\sum_{i=1}^{n}G(X_i,\theta) \right) \right\} \right]}.
\label{q:gradient}
\end{equation}

For a given $\theta \in \Theta$, let $\hat{\triangledown} g^n(\theta)$ be an SAA estimator for $\triangledown g^n(\theta)$. The gradient ascent update at the $l$th iteration is then given as
$$\theta^{l+1} = \Pi_{\Theta} (\theta^l + o_l \hat{\triangledown} g^n(\theta^l))$$
where $o_l$ is the step size and $\Pi_{\Theta}$ is the projection operator from $\mathbb{R}^d$ onto the set $\Theta$. The algorithm stops when the distance from $-\hat{\triangledown} g^n(\theta^l)$ to $N_{\Theta}(\theta^l)$, the normal cone to $\theta$ at $\theta^l$, is no more than a pre-specified threshold $\Delta$.

Because the denominator of \eqref{q:gradient} is in the form of \eqref{q:mainproblem_fixedtheta},  with $\varphi$ and $G(\cdot, \theta)$ playing roles of $F$ and $G(\cdot)$ respectively, we can follow the procedures in Section \ref{s:is} to estimate it using importance sampling. Although the importance sampling methods give guaranteed asymptotic performance bounds only for estimators of the denominator
in \eqref{q:gradient}, for our numerical studies we use the same change of measure to estimate the numerator as well. As discussed in Section \ref{s:is}, there are two approaches depending on whether $X_i$ or $U_i=G(X_i,\theta)$ is used for the change of measure. Below we outline the implementation for both approaches.\\

{\bf Change of measure on $X_i$.} To implement the importance sampling scheme based on a change of measure on $X_i$, we  follow the procedure outlined below Theorem \ref{t:main} to construct a generalized subsolution/control. We select $\{(\bar{W}_k,\bar{a}_k) \}_{k=1,2}$ from the family of affine subsolution/control pairs $(\bar{W}, \bar{a})$, where $\bar{W}$ is of the form \eqref{q:W_bar} and $\bar{a}$ satisfies \eqref{eq:eq932}. We  impose the requirements $\bar{W}_1(y,1)\le 2 \phi_1 (y)$ and $\bar{W}_2(y,1)\le 2 \phi_2 (y)$ for all $y\in \mathbb{R}^m$, to guarantee \eqref{q:wedge_W} holds with $\varphi$ in place of $F$. Since $\varphi_1(y)\equiv \Lambda\epsilon^2$, we simply let $\bar{W}_1(y,t) \equiv 2\Lambda\epsilon^2$; it can be verified that $\bar{a}_1=0$ satisfies \eqref{eq:eq932}. The coefficients for $\bar{W}_2$ and the corresponding $\bar{a}_2$ are determined by the following optimization problem:
\begin{align*}
\max_{\bar{a}_2,\bar{c},u} \quad &  \bar{c} -H(-\bar{a}_2,-u)  - H_1(\bar{a}_2)\\
\text{s.t.  } & u \leq 0, \qquad \bar{c} \leq 0,   \qquad \bar{c}+ \frac{u^T u}{8\Lambda} \leq 0.
\end{align*}
The constraints arise from the requirement $\bar{W}_2(y,1)\le 2 \phi_2 (y)$ for all $y$ and the objective function reflects the fact that we aim to maximize $\bar{W}_2(0,0)$ and that \eqref{q:a_bar} should be satisfied with $\bar W$ replaced with $\bar W_2$.
After finding the optimal solution of the above optimization problem, we define $\bar{W}_2$ as
\[
\bar{W}_2(y,t) = \bar{c} + \langle u, y \rangle - (1-t)\big(H(-\bar{a}_2,-u)  + H_1(\bar{a}_2)\big).
\]
It is easily checked that $(\bar W_2, \bar a_2)$ is a subsolution/control pair.
With $\{(\bar{W}_k,\bar{a}_k) \}_{k=1,2}$ obtained, we next construct a generalized subsolution/control by defining $\bar{W}^\delta$ and $\rho^\delta_k$ as in \eqref{q:W_delta} and \eqref{q:rho_delta}, and then follow the procedure given below \eqref{q:def_gen_sub_H} to obtain an unbiased sample average estimator for the denominator of \eqref{q:gradient} of the form in \eqref{q:Z^n_H}  (with $F$ replaced by $\varphi$ and
$(\bar W, \rho_k)$ by $(\bar W^{\delta}, \rho_k^{\delta})$).
For the numerator of \eqref{q:gradient}, we use the same generalized subsolution/control as above to construct the change of measure on $X_i$, so the unbiased estimator for the numerator is similar to \eqref{q:Z^n_H} except that $e^{-nF(\bar{Y}^n_n)}$ is replaced by $e^{-n\varphi(\bar{Y}^n_n)}\nabla_\theta \varphi(\bar{Y}^n_n)$.\\

{\bf Change of measure on $U_i$.} To conduct importance sampling scheme based on a change of measure on $U_i$ we follow \cite{dupuis2007subsolutions}. For $k=1,2$ we let
\[
\beta_k \in \text{argmin}_{\beta\in \mathbb{R}^m} \left[L^\theta_2(\beta)+\varphi_k(\beta)\right] \text{ and }
\alpha_k \in \text{argmax}_{\alpha\in \mathbb{R}^m} \left[\langle \alpha, \beta_k \rangle - H^\theta_2(\alpha)\right]
\]
where the functions $H^\theta_2$ and $L^\theta_2$ are defined in \eqref{q:def_H2_theta} and \eqref{q:def_L2_theta},
and then define for $k=1,2$, functions $\bar{W}_k: \mathbb{R}^m \times [0,1]\to \mathbb{R}$ as
 $$\bar{W}_k(y,t) = -2\langle \alpha_k, y \rangle + 2[\varphi_k(\beta_k) + \langle \alpha_k, \beta_k \rangle ] - 2(1-t)H^\theta_2(\alpha_k).$$
Note that since $\varphi_1$ is a constant function, $\alpha_1=0$.
We then define $\bar{W}^\delta$ and compute $\rho^\delta_k$ similarly as in the first approach, to obtain a generalized subsolution/control
 as in Definition \ref{def_gen_sub} (see \cite{dupuis2007subsolutions}). Using this generalized subsolution/control, we follow the procedure given below \eqref{q:def_gen_sub} to obtain an unbiased estimator for the denominator of \eqref{q:gradient}. Again the numerator of \eqref{q:gradient} is estimated using the same change of measure. Note that the above definitions of $\bar{W}_k$, $\alpha_k$ and $\beta_k$ imply that
 \[
 \bar{W}_1(0,0) \wedge  \bar{W}_2(0,0) = \min_{k=1,2} \left(2(\varphi_k(\beta_k)+ L^\theta_2(\beta_k)\right) = 2 \inf_{\beta \in \mathbb{R}^m}[\varphi(\beta)+L^\theta_2(\beta)] = 2 \gamma,
 \]
where $\gamma$ is as defined in \eqref{q:gamma} with $\varphi$ in place of $F$. By a similar argument as  below \eqref{q:gamma}, it follows that the estimator for the denominator constructed using $(\bar{W}^\delta, \rho^\delta_k)$
is $\delta \log 2$ - asymptotically optimal (see \eqref{eq:eq446}).

Although the importance sampling estimator (for the denominator) constructed using the change of measure on $U_i$ is nearly asymptotically optimal in theory, it is hard
to implement in practice due to the difficulty of simulating from the distribution \eqref{q:exp_change} in typical situations. In contrast, the change of measure on $X_i$ is much easier to implement. This difference will be discussed further when we present our numerical results in Section \ref{ss:numerical} below.

\subsection{Minimization of the buffered probability}\label{ss:comp_buffered}

For problems in which $m=1$, we can use the buffered probability as an alternative measure of reliability, as discussed in Section \ref{s:buffered}.
In our numerical examples with $m=1$, we consider a function $G: \mathbb{R}^d\times \Theta \to \mathbb{R}$ of the form
$$G(x,\theta)= f^T(x-\theta)^+-b^T(c-\theta),\ \theta\in\Theta \subset \mathbb{R}^d,\ x\in\mathbb{R}^d$$
where $b,c,f \in \mathbb{R}^d$ are fixed parameters and for $x \in \mathbb{R}^d$, $x^+ = (\max(x_i, 0))_{i=1}^d$. When the set $\Theta$ is convex, the minimization of the buffered probability can be formulated as the following convex optimization problem as observed in \eqref{problem: convex buffered}:
$$\min_{\lambda\geq 0,\bar{\theta}\in \lambda\Theta} \mathbb{E}\left[ \frac{1}{n}\sum_{i=1}^{n} f^T(\lambda X_i- \bar{\theta})^+-  b^T(\lambda c-\bar{\theta}) +1 \right]^+.$$
In the numerical examples, $X_i$ follows normal distribution, so at each fixed $\bar{\theta}$ the SAA approximation of the above expectation is smooth with probability one and the gradient descent method can be applied. When estimating the objective value and the gradient, we apply the importance sampling scheme discussed in Section \ref{ss:change_on_X} with $F=\infty 1_{A^c}$.

\subsection{Numerical results}\label{ss:numerical}
\subsubsection{Example 1}
\label{exa:exa1}

We use this simple example, in which $h = m = d= 1$, to compare the two importance sampling schemes discussed in Sections \ref{ss:change_on_U} and \ref{ss:change_on_X} with the ordinary Monte-Carlo simulation. We also illustrate how the solution to the limiting problem (\ref{q:max_g}) is used as an initial point for the problem (\ref{q:max_gn}).\par

The parameters of the function $\varphi$ are $\Lambda=10^5$ and $\varepsilon = 0.01$. The function $G$ is defined as
$$G(x,\theta) = (x-\theta)^+ -0.4(1.5 - \theta).$$
We let $\Theta=[0,1.5]$, $n = 100$ and $\eta$ be the standard normal distribution.
Without using any variance reduction method, a sample average approximation of $g^n(\theta)$ based on a sample of size $N$ is
\begin{equation}
-\frac{1}{n}\log\left\{ \frac{1}{N}\sum_{j=1}^{N}\exp \left\{ -n\varphi\left( \frac{1}{n}\sum_{i=1}^{n}G(x^j_i,\theta) \right) \right\}\right\},
\label{eq: numerical 1}
\end{equation}
where $\{x^j_i\}$ are independent realizations from the distribution $\eta$. \par

$N$ independent realizations of  $e^{ -n\varphi\left( \frac{1}{n}\sum_{i=1}^{n}G(X_i,\theta) \right)}$ are simulated to compute (\ref{eq: numerical 1}). To compare the performance with the two importance sampling schemes, we calculate the sample average and the sample standard deviation of these $N$ realizations. Since these values are very close to zero, we compute the natural logarithm and denote them as ``log sample mean" and ``log sample std" in Table \ref{table: no IS}. For notation simplicity, the expectation in \eqref{q:appro_phi} is denoted as $p(\theta)$.
\begin{table}[H]
	\centering
	\tiny{
		\begin{tabular}{ c| c| c c c c c c c c }
			\hline
			\multicolumn{2}{c|}{$\theta$} & 0 & 0.2 & 0.4 & 0.6 & 0.8 & 1.0 & 1.2 & 1.4 \\
			\hline
			\multirow{2}{*}{$N=5\times10^3$} & log sample mean &  -7.1308 & -35.4495 & $-\infty$ &$-\infty$ & $-\infty$ &  $-\infty$ &  -5.2430 & -0.8957\\
			& log sample std& -7.8243 & -35.4495 & $-\infty$ & $-\infty$ & $-\infty$ & $-\infty$ & -6.8907 & -4.9723\\
			&CPU time (sec)& 0.0500 & 0.0200  & 0.0200 & 0.0600 & 0.0600 & 0.0400 & 0.0500 & 0.0299\\
			\hline
			\multirow{2}{*}{$N=5\times10^5$} &  log sample mean & -7.2532& -8.9291& -10.8197& -11.3306 & -10.9251 & -8.9533 & -5.3179 & -0.9424\\
			&  log sample std &  -10.1896 & -11.0293 & -11.9710 & -12.2264 & -12.0237 & -11.0423 & -9.2264 & -7.2831\\
			& CPU time (sec)& 2.3699 & 2.8100 & 2.6100 & 2.4000 &  2.5100 & 2.3999 & 2.3899 &  2.2400\\
			\hline
		\end{tabular}
	}
	\caption{Estimation of $p(\theta)$
	using ordinary Monte-Carlo simulation in Example 1}
	\label{table: no IS}
\end{table}

Table \ref{table: no IS} summarizes the performance of the ordinary Monte-Carlo simulation for different sample sizes $N$ and different values of $\theta$. The CPU time in Table \ref{table: no IS} includes the time for sampling,  and calculating the ``log sample mean" and the ``log sample std". When $N=5\times 10^3$, some of the ``log sample mean" and the ``log sample std" are $-\infty$. This is because none of the $5\times 10^3$ realizations correspond to the occurrence of the  rare event $\frac{1}{n}\sum_{i=1}^{n}G(X_i,\theta)>0$. When the sample size is increased to $N=5\times 10^5$, we get better estimates for $p(\theta)$.\par

Next, we implement the importance sampling scheme discussed in Section \ref{ss:change_on_U}, namely the change of measure on $U_i = G(X_i,\theta)$. Let $\bar{U}_i$ denote the random vatiable corresponding to $U_i$ under the replacement measure.
\begin{table}[H]
	\centering
	\tiny{
		\begin{tabular}{ c| c| c c c c c c c c }
			\hline
			\multicolumn{2}{c|}{$\theta$} & 0 & 0.2 & 0.4 & 0.6 & 0.8 & 1.0 & 1.2 & 1.4 \\
			\hline
			\multirow{2}{*}{$N=5\times10^3$} & log sample mean & -7.2594 & -9.2654 & -10.7920 & -11.5318 & -11.0085 &  -8.9169&  -5.3440 &  -0.9630\\
			& log sample std & -10.8499 & -12.7699 & -14.2367 & -14.9364 & -14.3812 & -12.3912 & -8.9455 & -5.1265\\
			& CPU time (sec)& 24.7700 & 25.9899& 25.0000 & 26.7000 & 23.1599 & 19.5699 &  17.7700 & 15.8299\\
			\hline
		\end{tabular}
	}
	\caption{Estimation of $p(\theta)$
	with change of measure on $U_i$ in Example 1}
	\label{table: on G}
\end{table}
Under the replacement measure, about 50\% of the realizations of $\frac{1}{n}\sum_{i=1}^{n}\bar U_i$ are positive (i.e. rare events) for each fixed $\theta$. This is in contrast to the ordinary Monte-Carlo simulation where only few or none of the realizations correspond to the occurrence of the corresponding rare event as shown in Table \ref{table: no IS}. The ``log sample std" in Table \ref{table: on G} are relatively smaller compared to the ``log sample mean", which indicates that the estimates using this  scheme are more accurate. However, the computation required for this scheme is significantly more as indicated by the high CPU time. This is because the construction of a single realization of $\bar{U}_i$ under the replacement measure is computationally intensive. Since the distribution of $\bar U_i$ does not have a tractable closed form, we need to numerically solve an equation that inverts the cumulative distribution function of $\bar U_i$ at each step in order to draw a sample from its distribution. For each fixed $\theta$, the calculation of Table \ref{table: on G} involves solving $Nn$ such equations, which takes up most of the CPU time.\par

The importance sampling scheme of Section \ref{ss:change_on_X} where one applies an exponential change of measure on $X_i$ can significantly reduce the computational burden. From Table \ref{table: on X}, we see that this scheme performs significantly better than the ordinary Monte-Carlo simulation. As expected, the sample standard deviation decreases when the sample size $N$ increases to $5\times10^5$, in which case it is approximately similar to that in Table \ref{table: on G} (where $N=5\times10^3$). An indicator that this scheme is not as efficient as the one in Section \ref{ss:change_on_U} is that the proportion of rare events is significantly smaller. The proportions of rare events for each fixed $\theta$ are recorded in the last row ``prop" in Table \ref{table: on X}. These results suggest that the change of measure in Section \ref{ss:change_on_X} may not be asymptotically efficient. Nevertheless, for the values of $\theta$ between $0$ and $0.8$, the scheme in Section \ref{ss:change_on_X} improves the proportion of rare events by a few hundred times in comparison  to the ordinary Monte-Carlo simulation. Moreover, a key advantage of this scheme over that in Section \ref{ss:change_on_U} is that drawing $\bar X_i$ is much simpler than drawing $\bar U_i$. We do not need to numerically solve the equations or calculate the inverse cumulative function to get a realization of $G(\bar{X}_i,\theta)$ under the replacement measure. We only need to draw samples from the standard normal distribution and then suitably translate  and scale these values. Hence, even with a larger sample ($N= 5\times10^5$), the CPU time required by this scheme is still significantly lesser (by a factor of $10$) than that required by the importance sampling scheme with $N= 5\times10^3$ based on an exponential change of measure on $U_i$.
\begin{table}[H]
	\centering
	\tiny{
		\begin{tabular}{ c| c| c c c c c c c c }
			\hline
			\multicolumn{2}{c|}{$\theta$} & 0 & 0.2 & 0.4 & 0.6 & 0.8 & 1.0 & 1.2 & 1.4 \\
			\hline
			\multirow{2}{*}{$N=5\times10^3$} & log sample mean &  -7.2653 & -9.1440 & -10.7649 & -11.5153 & -10.9374 &  -8.3427 &  -5.1589 & -0.8984\\
			& log sample std & -9.9762 & -11.2694 & -12.0563 & -12.7407 & -11.8451 & -9.5270 & -7.3090 & -4.9920\\
			& CPU time & 0.2700 & 0.0900 & 0.1299 & 0.1100 & 0.1499 & 0.1000 & 0.1199 & 0.0999\\
			\hline
			\multirow{2}{*}{$N=5\times10^5$} & log sample mean & -7.2719& -9.2986& -10.8466& -11.6375 & -11.0927 & -9.0575 & -5.2923 & -0.9423\\
			& log sample std & -12.0156 & -13.4715 & -14.4857 & -14.7782 & -14.0001 & -12.2827 & -9.6416 & -7.3022\\
			& CPU time & 3.7100 & 5.0000 & 4.9199 & 4.3499 & 3.8299 & 3.5699 & 3.3500 & 3.3699\\
			& prop & 0.1216 & 0.0530 & 0.0195 & 0.0068 & 0.0034 & 0.0040 & 0.0198 & 0.3937\\
			\hline
		\end{tabular}
	}
	\caption{Estimation of $p(\theta)$
	with change measure on $X$ in Example 1}
	\label{table: on X}
\end{table}

Recall from Theorem \ref{t:main} that the decay rate of the scheme in Section \ref{ss:change_on_X} is between $\bar{W}(0,0)$ and $2\gamma$ where $\gamma$ is as in \eqref{q:gamma}. As shown by Table \ref{table: performance on X}, this scheme does not
achieve the upper bound $2\gamma$. As a consequence, a larger sample is needed to match the performance  in Table \ref{table: on G}.
\begin{table}[H]
	\centering
	\small{
		\begin{tabular}{ c| c c c c c c c c }
			\hline
			$\theta$ & 0 & 0.2 & 0.4 & 0.6 & 0.8 & 1.0 & 1.2 & 1.4 \\
			\hline
			$\bar{W}(0,0)$ & 0.0829 & 0.1082 & 0.1246 & 0.1278 & 0.1144 & 0.0834 & 0.0382 & 0.0002\\
			$2\gamma$ & 0.1012 & 0.1378 & 0.1664 & 0.1794 & 0.1694 & 0.1304 & 0.0633 & 0.0004\\
			\hline
		\end{tabular}
	}
	\caption{Lower and upper bounds of the decay rate for change measure on $X_i$ in Example 1}
	\label{table: performance on X}
\end{table}

From Table \ref{table: on G} and Table \ref{table: on X}, we find that the optimal value of $\theta$ for the objective function in  (\ref{q:max_gn}) is close to 0.6. With 0.6 as the initial point, an SAA solution to the limiting problem (\ref{q:max_g}) is $\theta^*=0.6229$ with an optimal value 0.0898. This solution is obtained by directly using the Matlab nonlinear programming solver $fmincon$. We then implement the gradient ascent method to \eqref{q:max_gn} with an initial point $\theta^0 = \theta^*$ and a diminishing step size $o_l=\frac{0.1}{\sqrt{l+1}}$ for fifty iterations. Figure \ref{figure: d = 1, ordinary MC, trajectory} shows nine trajectories of objective values for the the gradient ascent method where the objective values of \eqref{q:max_gn} and the corresponding gradients are estimated by the ordinary Monte-Carlo simulation (sample size $N = 2.5\times 10^6$). Figure \ref{figure: d = 1, IS on X, trajectory} shows nine trajectories where the the objective values and the gradients are estimated by the importance sampling scheme from Section \ref{ss:change_on_X}. As a result of the variance reduction, the trajectories in Figure \ref{figure: d = 1, IS on X, trajectory} are more concentrated.

	\begin{figure}[H]
	\centering
	\begin{minipage}[t]{.45\linewidth}
		\centering
		\includegraphics[width=\linewidth]{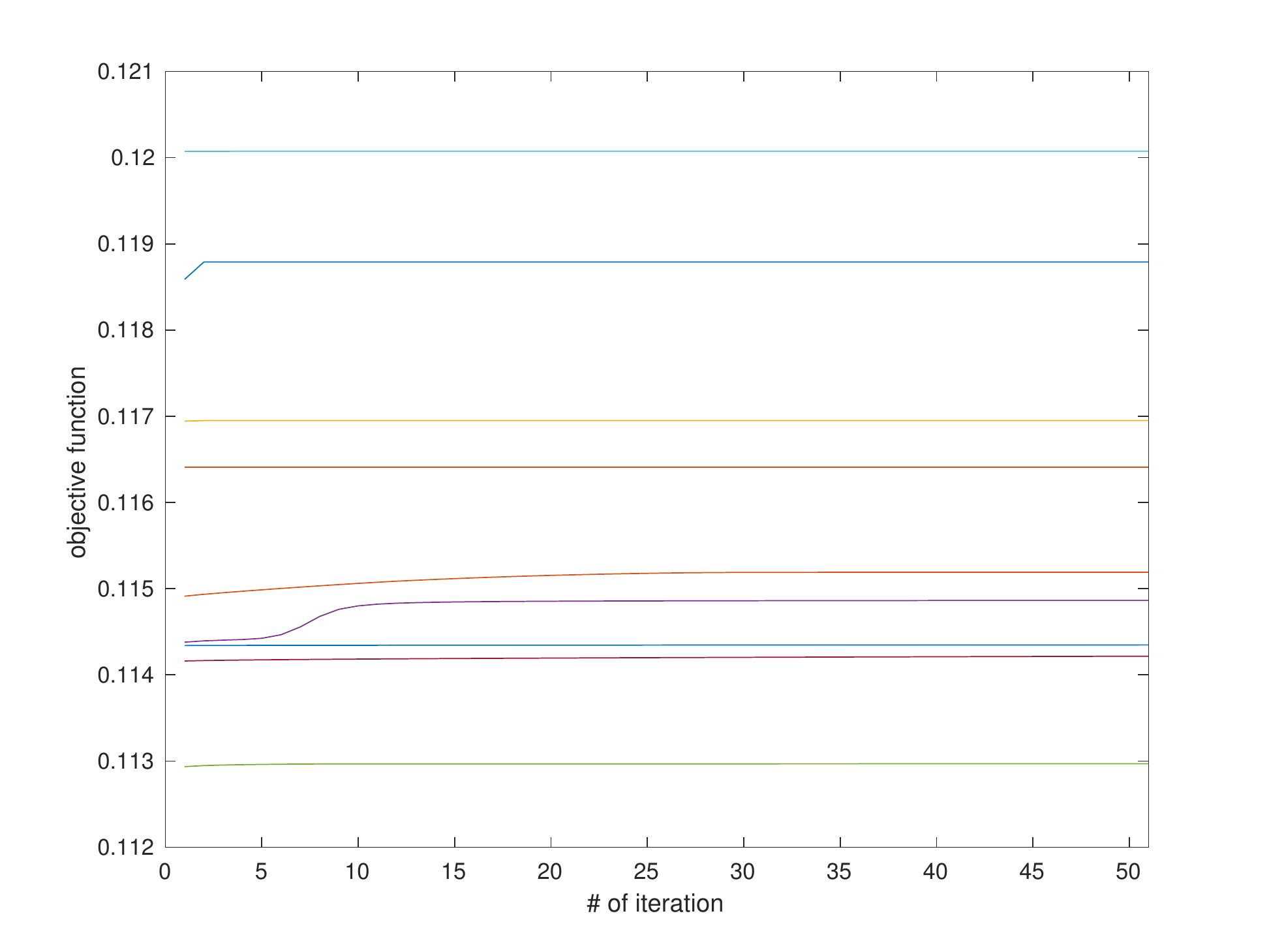}
		\caption{Trajectories of objective values of \eqref{q:max_gn} for the gradient method in Example 1 with the ordinary Monte-Carlo simulation}	
		\label{figure: d = 1, ordinary MC, trajectory}
	\end{minipage}
	\hfill
	\begin{minipage}[t]{.45\linewidth}
		\centering
		\includegraphics[width=\linewidth]{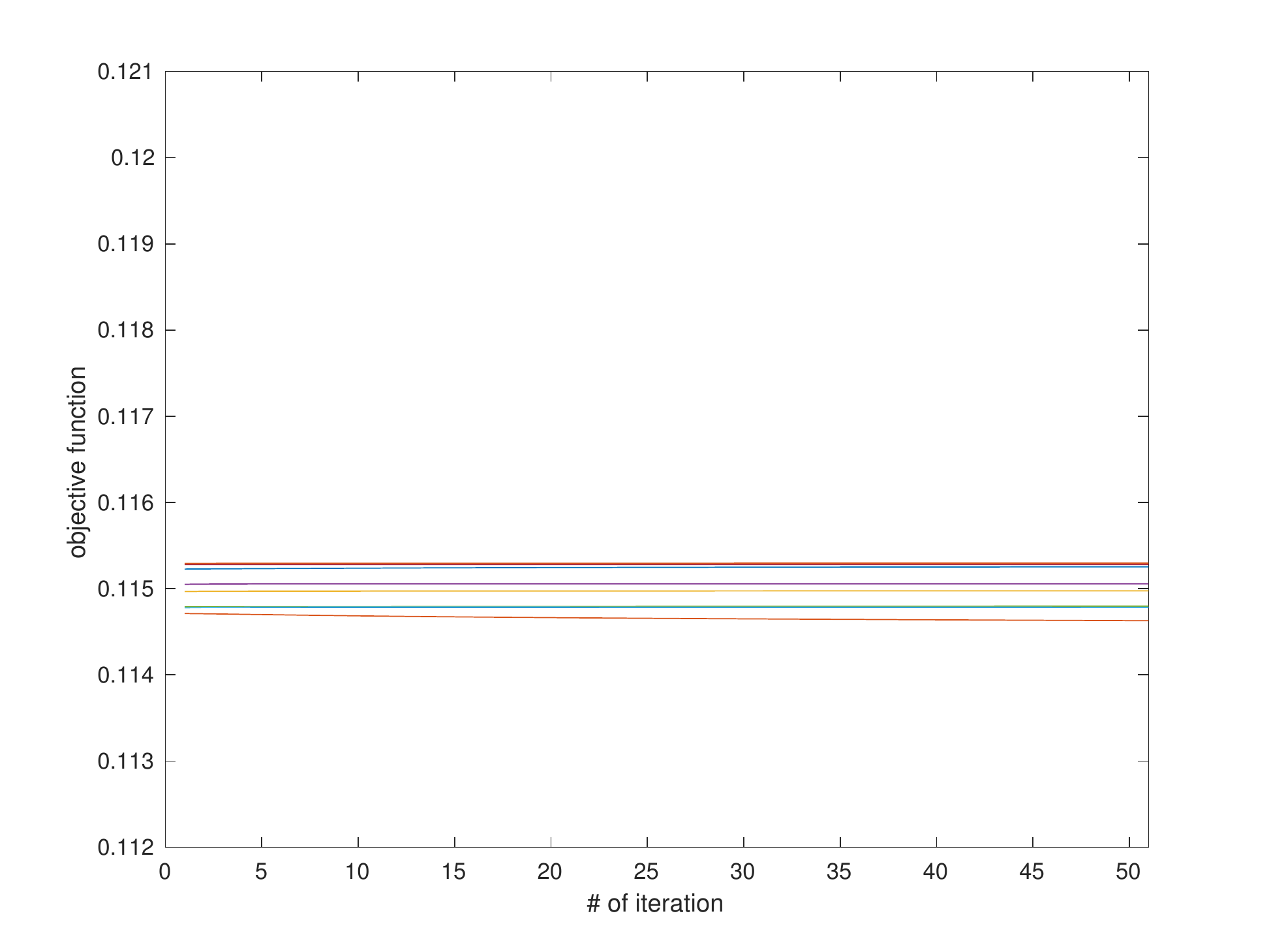}
		\caption{Trajectories of objective values of \eqref{q:max_gn} for the gradient method in Example 1 with the importance sampling scheme from Section \ref{ss:change_on_X}}
		\label{figure: d = 1, IS on X, trajectory}
	\end{minipage}
\end{figure}

Next we consider examples where implementing the importance sampling scheme from Section \ref{ss:change_on_U} will be prohibitively complicated
and therefore the scheme introduced in Section \ref{ss:change_on_X}, although suboptimal, provides a computationally feasible approach.

\subsubsection{Example 2}
\label{exa:exa2}

In this section, a 2-dim example ($h=m=d=2$) and a 5-dim example ($h=m=d=5$) will be illustrated. For these two examples, the $i$th component of the function $G$ is defined as
$$G_i(x_i,\theta_i) \doteq (x_i-\theta_i)^+ - b_i (c_i-\theta_i).$$
The other parameters are summarized in Table \ref{table: ex 2 parameters}.
\begin{table}[H]
	\centering
	\small{
		\begin{tabular}{ c c c c c c}
			\hline
			$\Lambda$ & $\varepsilon$ & $n$ & $N$ & $o_l$ & $\Delta$ \\
			\hline
			$10^5$ & 0.01& 50 & $2.5 \times 10^6$& $o_l=\frac{0.5}{\sqrt{l+1}}$& $10^{-4}$\\
			\hline
		\end{tabular}
	}
	\caption{Parameters in Example 2}
	\label{table: ex 2 parameters}
\end{table}

In the 2-dim example, the measure $\eta$, namely the distribution of $X_i$, is bivariate normal with mean 0, standard deviation 1 and covariance 0.6. The feasible set $\Theta$ is $[0,1.5]\times[0,2]$, and the parameters for the function $G$ are $b=[0.4, 0,3]^T$ and $c = [1.5,2]^T$. An SAA solution to the limiting problem is $\theta^*=[0.6415,1.1595]^T$ and the corresponding optimal value is 0.2065. As in Example 1, the limiting problem is solved by the Matlab function $fmincon$ and different initial points are considered. Starting from $\theta^*$, the gradient ascent algorithm for problem (\ref{q:max_gn}) stops after 29 iterations with $\theta^{29}=[0.6284,1.1301]^T$ and an optimal value 0.2714 (with the corresponding unnormalized value $e^{-n g^n(\theta^{29})} = 1.28\times 10^{-6}$).
Among the $2.5\times 10^6$ realizations, 0.05\% of them correspond to the occurrence of the event of interest, while the probability is of order $10^{-6}$. Figure \ref{figure: d=2 opt result} shows the objective values for each iteration and Figure \ref{figure: d=2 contour} is the contour map of the objective function $g^{50}(\theta)$ which shows that the obtained $\theta^{29}$ is close to a local minimum.
\begin{figure}[H]
	\centering
	\begin{minipage}[t]{.45\linewidth}
		\centering
		\includegraphics[width=\linewidth]{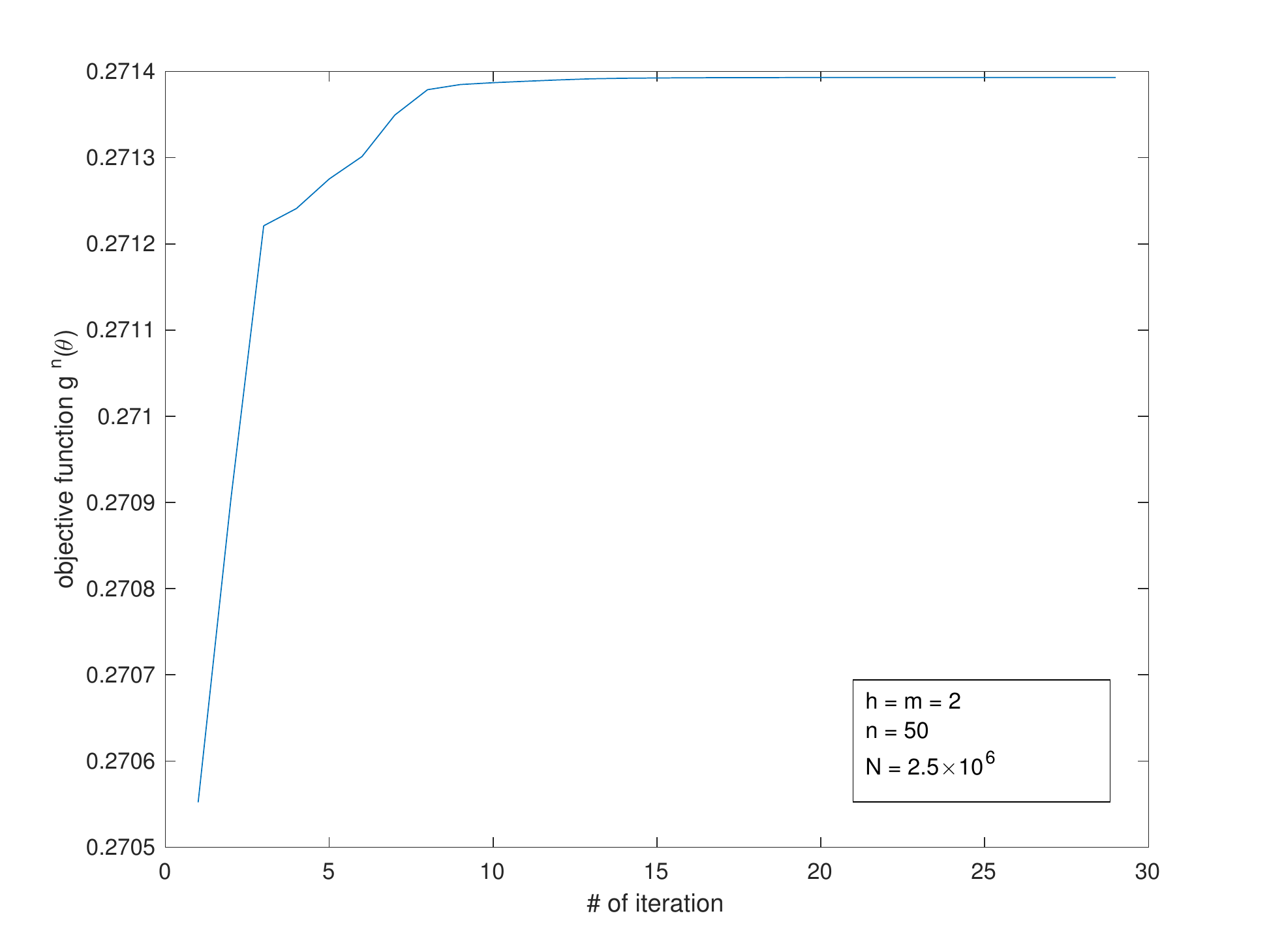}
		\caption{Objective values of \eqref{q:max_gn} for the gradient method in Example 2 (the 2-dim example)}
		\label{figure: d=2 opt result}
	\end{minipage}
	\hfill
	\begin{minipage}[t]{.45\linewidth}
		\centering
		\includegraphics[width=\linewidth]{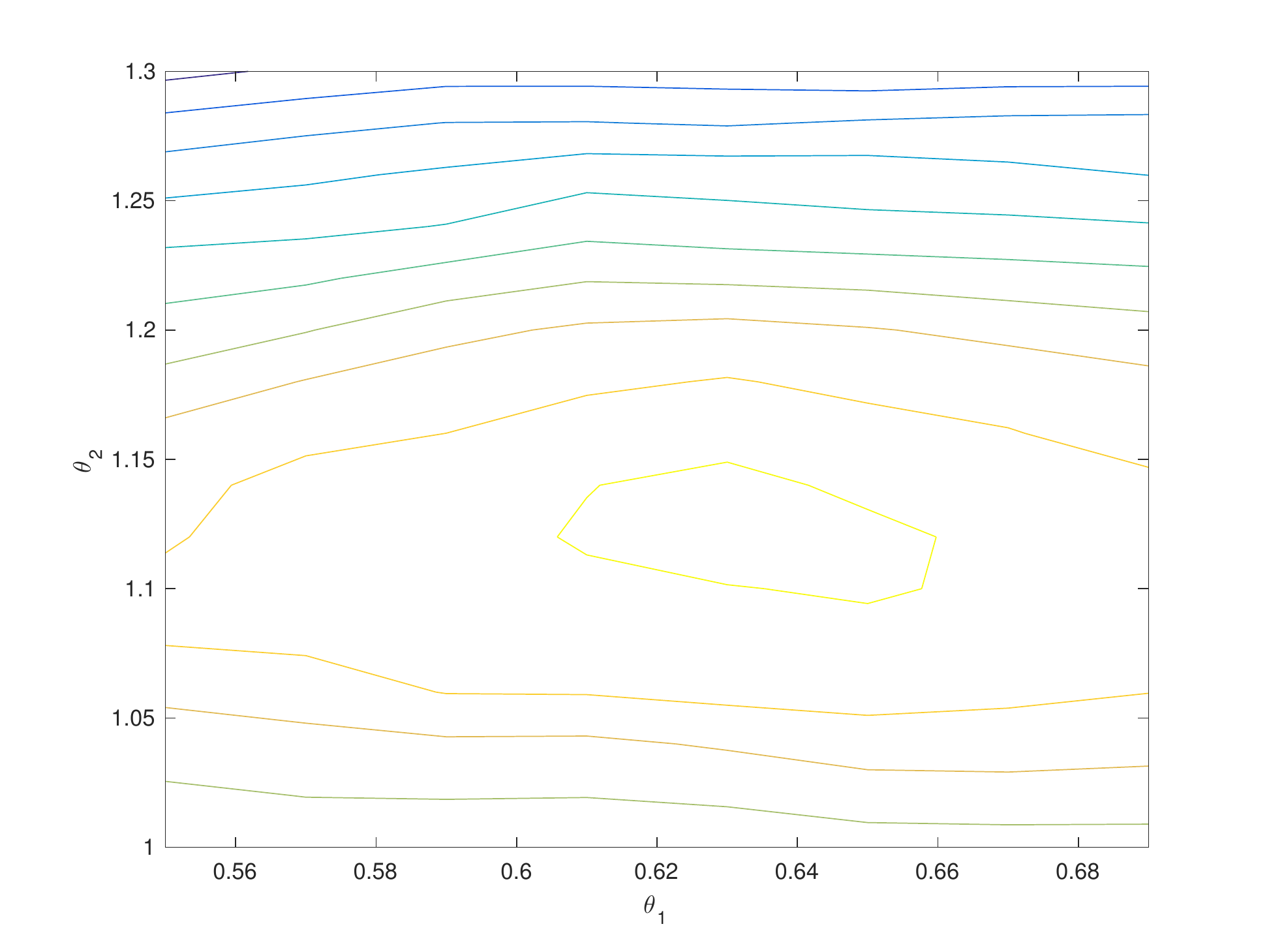}
		\caption{The contour map of $g^n(\theta)$ near $\theta^{29}$ in Example 2 (the 2-dim example)}
		\label{figure: d=2 contour}
	\end{minipage}
\end{figure}

We repeat the above procedure for the 5-dim example. The parameters of the function $G$ are $b=[0.3,0.2,0.3,0.3,0.2]^T$ and $c=[1,2,2,1,2]^T$, and the feasible set $\Theta$ is $[0,c]$. The random variables $\{X_i\}^n_{i=1}$ are i.i.d. multivariate normal with mean 0 and a randomly generated covariance matrix
$$\left[
\begin{array}{ccccc}
1 &0.3750&	0.1066&	0.7878  & -0.9006\\
0.3750 & 1 & 0.9390& 0.5709&	-0.4219\\
0.1066&	0.9390&	1&	0.2726&	-0.0910\\
0.7878&	0.5709&	0.2726&	1&	-0.9228\\
-0.9006&	-0.4219&	-0.0910&	-0.9228&	1
\end{array}
\right].$$

The SAA solution to the limiting problem is $\theta^*=[0.6270,1.6872,0.0000,0.4105,1.2983]^T$ with an optimal value 0.1143. For the problem (\ref{q:max_gn}), the stopping criterion is satisfied after 292 iterations. The optimal solution is $\theta^{292}= [0.6256,1.5272,0.5443,0.42321.2149]^T$ and the optimal value is 0.3423 (with the corresponding unnormalized value $e^{-n g^n(\theta^{292})} = 3.7\times 10^{-8}$ ). About 0.02\% of the realizations correspond to the occurrence of the event of interest while the probability close to the optimal solution is of order $10^{-8}$. Figure \ref{figure: d=5 opt result} records the objective values for each iteration.

\begin{figure}[H]
	\centering
	\includegraphics[width=0.6\textwidth]{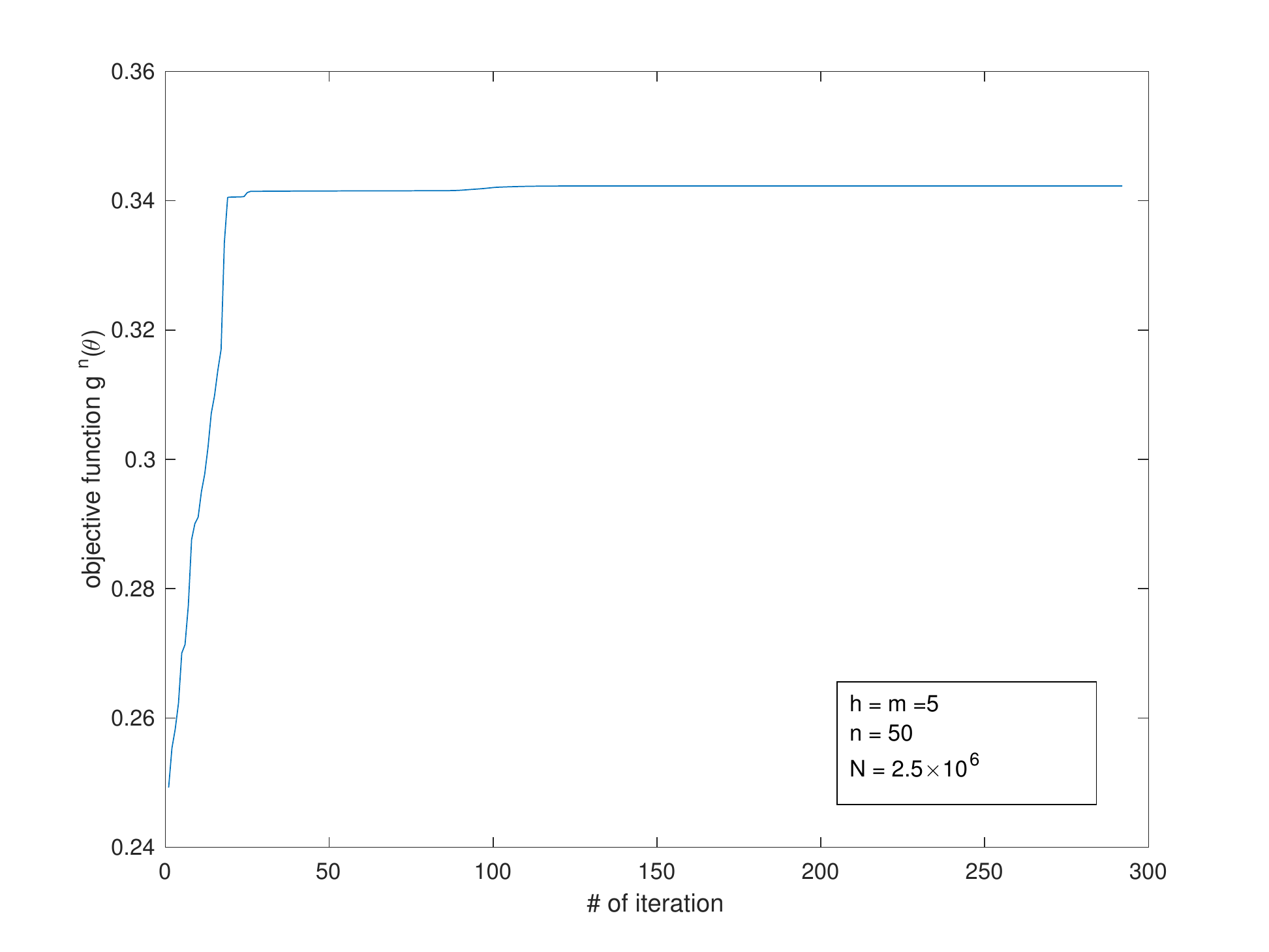}
	\caption{Objective values of \eqref{q:max_gn} for the gradient method in Example 2 (the 5-dim example)}
	\label{figure: d=5 opt result}
\end{figure}

\subsubsection{Example 3}
\label{exa:exa3}
In this example, we let $h=d=5$ and $m = 1$. The function $G$ is defined in Section \ref{ss:comp_buffered} with $b=[0.3,0.2,0.3,0.3,0.2]^T$, $c=[1,2,2,1,2]^T$ and $f=[1,1,1,1,1]^T$, which is from $\mathbb{R}^5$ to $\mathbb{R}$. For this function $G$, the buffered probability is well defined, so we numerically solve the problem (\ref{q:max_g}) and then use the importance sampling scheme from Section \ref{ss:change_on_X} to solve the optimization problems (\ref{q:max_gn}) and (\ref{problem: convex buffered}).

The distribution of $X_i$ is the same as the 5-dim example of Example 2. The SAA solution to the limiting problem is $\theta^*=[0.7863, 1.2361, 0.7860, 0.7647, 0.8842]^T$ with the optimal value 0.0894. We solve the problem (\ref{q:max_gn}) and the problem (\ref{problem: convex buffered}) at $n=50$ and 100 for each case.

For the problem (\ref{q:max_gn}) with $n=50$, we let $N=5 \times 10^5$, $o_l=\frac{0.5}{\sqrt{l+1}}$ and $\Delta= 10^{-4}$. After 1886 iterations, the stopping criterion is satisfied. The optimal solution is $\theta^{1886}= [0.7359, 1.1708, 0.7526, 0.7656, 0.8524]^T$ with the optimal value 0.1020 (with the corresponding unnormalized value $e^{-n g^n(\theta^{1886})} = 6.1\times 10^{-3}$). Approximately 2.78\% of the realizations are rare events while the probability close to the optimal solution is about 0.0061. Figure \ref{figure: d=5 Gdim1 opt result} records the objective values for each iteration.
\begin{figure}[H]
	\centering
	\includegraphics[width=0.6\textwidth]{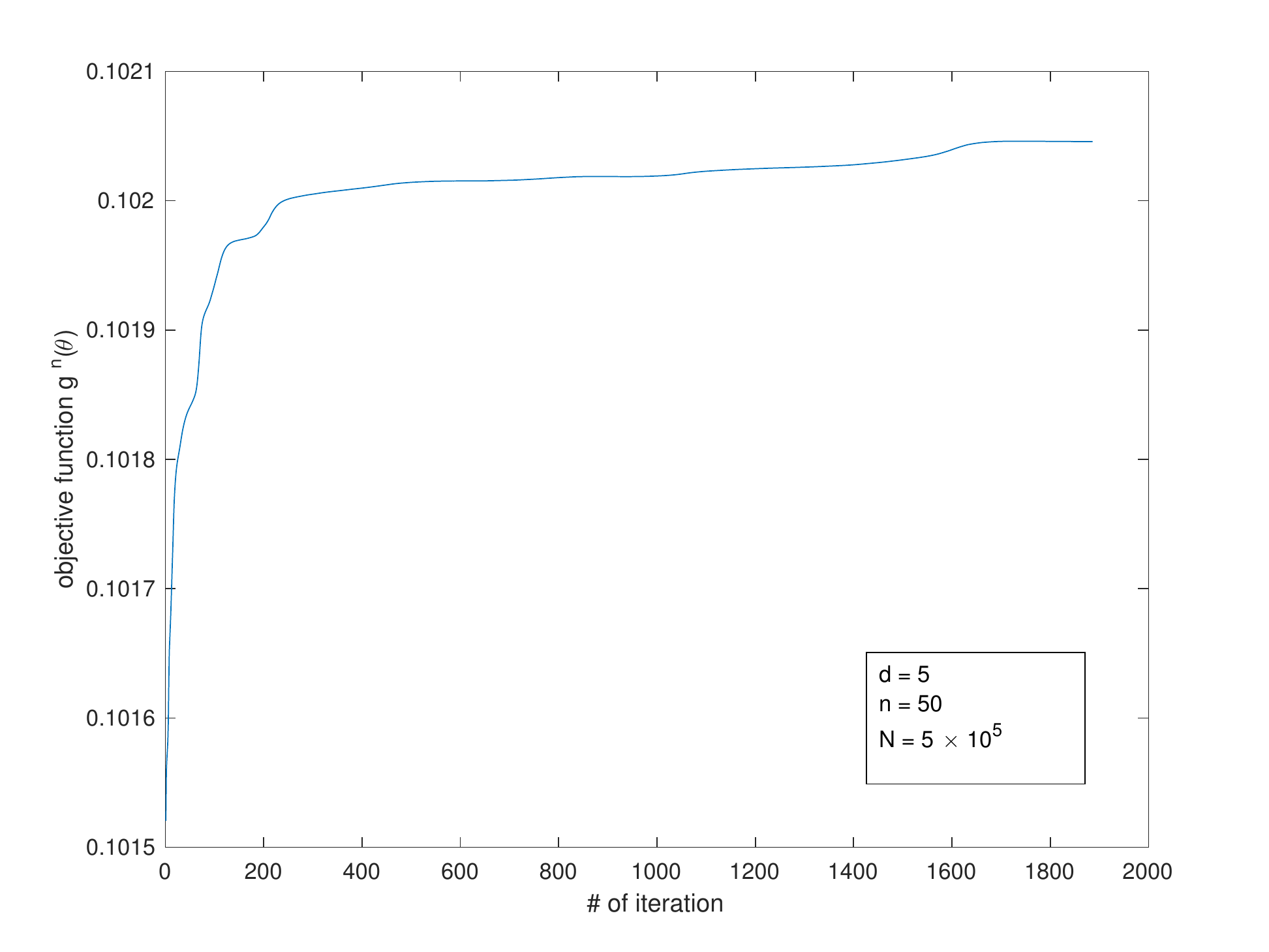}
	\caption{Objective values of \eqref{q:max_gn} for the gradient method in Example 3 ($n = 50$)}
	\label{figure: d=5 Gdim1 opt result}
\end{figure}

For the problem (\ref{problem: convex buffered}) with $n=50$, we arbitrarily select the initial point $(\theta^0,\lambda^0)=(f/2,1)$ which is the same as $(\bar{\theta}^0,\lambda^0)= (\lambda^0\theta^0,\lambda^0) = (f/2,1)$. We use a fixed length stepsize $o_l=0.1/\|\hat{\triangledown}h(\bar{\theta}^l,\lambda^l)\|_2$ (i.e., $\| \bar{\theta}^{l+1} - \bar{\theta}^l \|_2 = 0.1 $ for all $l$) to achieve a relatively large progress at each step. We also track $\theta^l$ at each step by calculating $\theta^l= \bar{\theta}^l/ \lambda^l$. The numerical solution is $\theta^{292}=[0.7314, 1.1534, 0.7312, 0.7631, 0.8369]^T$ with the optimal value $0.0159$. The estimated probability at $\theta^{292}$ is $0.0059$. The objective values at each iteration are showed in Figure \ref{figure: d=5 n=50 buff}.  Note that the objective value at iteration $l$ is not guaranteed to be the buffered probability corresponding to $\theta^l$. This is because $\lambda^l$ is not necessarily close to the optimal $\lambda$ for the minimization problem defining the buffered probability associated with $\theta^{l}$ before the algorithm terminates. The corresponding probability  and buffered probability at each iteration are calculated and shown in Figure \ref{figure: n=50 prob_vs_buffer}. The solid line shows the estimated buffered probability and the dashed line shows the estimated probability.\par

\begin{figure}[H]
	\centering
	\begin{minipage}[t]{.45\linewidth}
		\centering
		\includegraphics[width=\linewidth]{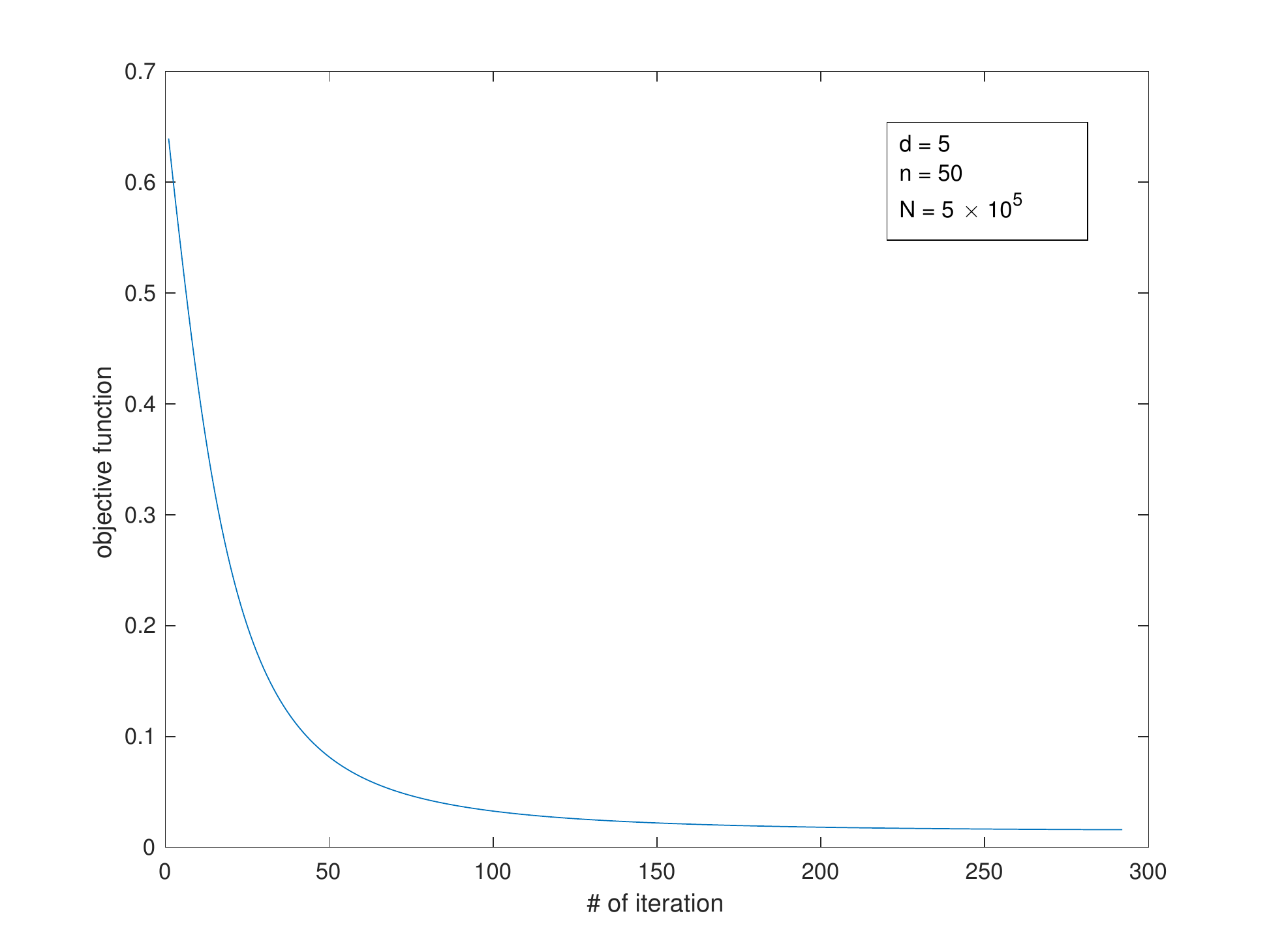}
		\caption{Objective values of \eqref{problem: convex buffered} for the gradient method in Example 3 ($n = 50$)}
		\label{figure: d=5 n=50 buff}
	\end{minipage}
    \hfill
	\begin{minipage}[t]{.45\linewidth}
		\centering
		\includegraphics[width=\linewidth]{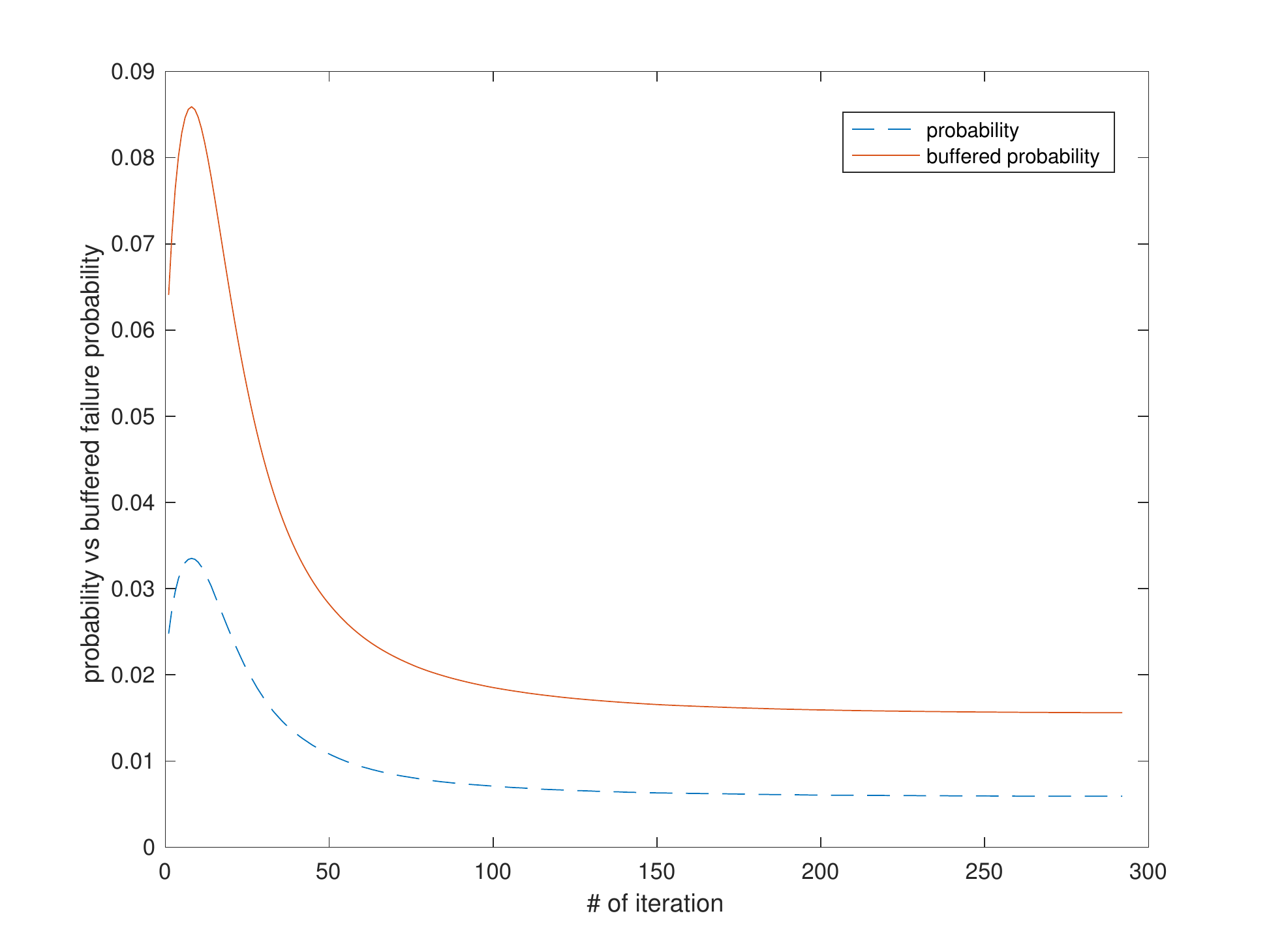}
		\caption{Probabilities and buffered probabilities corresponding to Figure \ref{figure: d=5 n=50 buff}}
		\label{figure: n=50 prob_vs_buffer}
	\end{minipage}
\end{figure}

We repeat the above calculation at $n=100$ and enlarge the sample size to $N=2.5 \times 10^6$. For the problem (\ref{q:max_gn}), the solution is $\theta^{864}= [0.7524, 1.1670, 0.7242, 0.7504, 0.8546]^T$ with the optimal value 0.0843 (with the corresponding unnormalized value $e^{-ng^n(\theta^{864})} = 2.2\times 10^{-4}$). Approximately 0.18\% of the realizations correspond to the occurrence of the event of interest. Figure \ref{figure: d=5 Gdim1 n100 opt result} records the objective values at each iteration for  the problem \eqref{q:max_gn}. For the problem (\ref{problem: convex buffered}), we use a stricter stopping criterion by setting $\Delta = 10^{-5}$. The solution is $\theta^{397}=[0.7203, 1.1344, 0.7186, 0.7505,0.8304]^T$ with the optimal value $6.9527\times 10^{-4}$. Figure \ref{figure: d=5 buff} shows the objective values at each iteration for the problem \eqref{problem: convex buffered}.
\begin{figure}[H]
	\centering
	\begin{minipage}[t]{.45\linewidth}
		\centering
		\includegraphics[width=\linewidth]{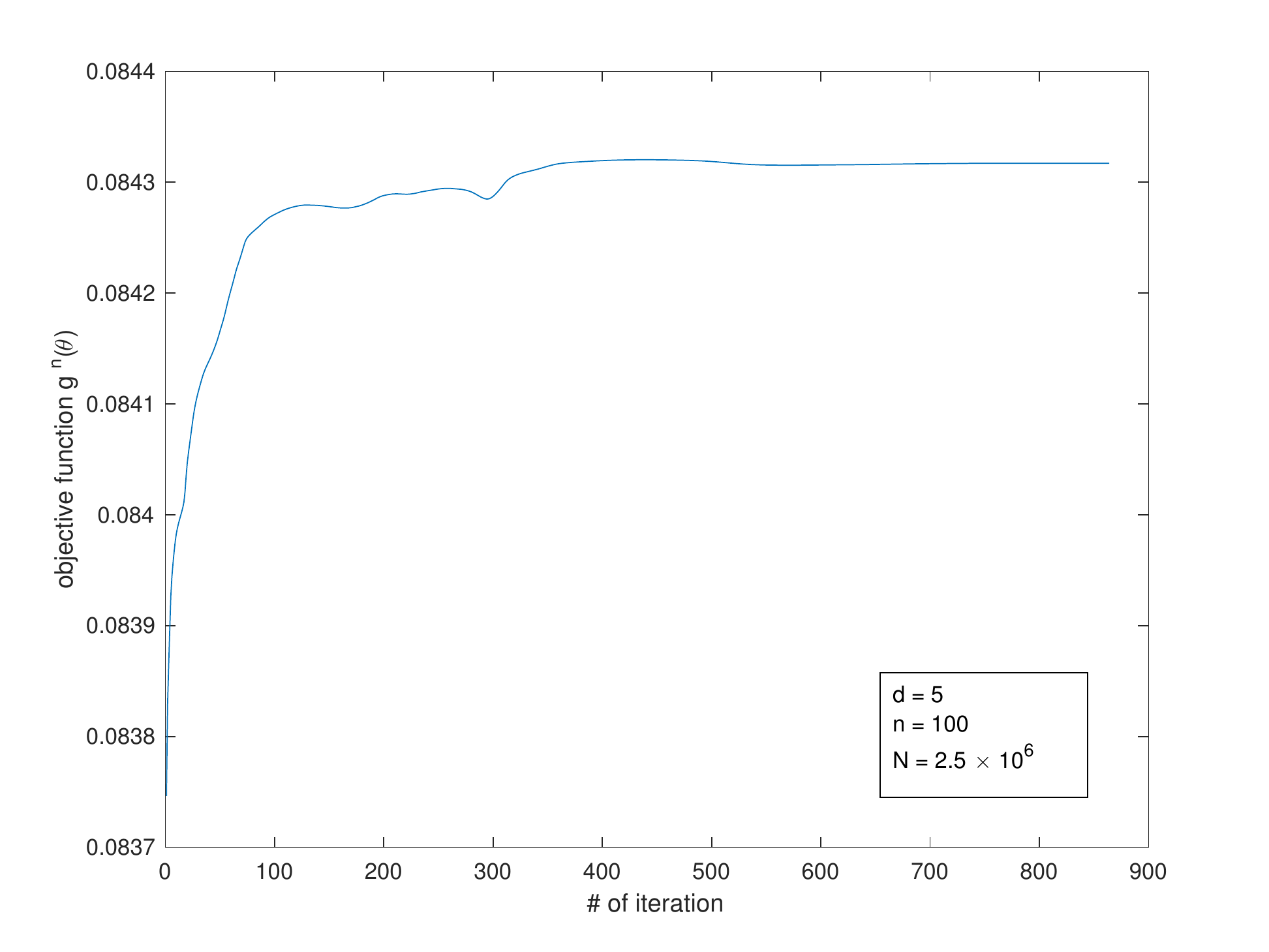}
		\caption{Objective values of \eqref{q:max_gn} for the gradient method in Example 3 ($n = 100$)}
		\label{figure: d=5 Gdim1 n100 opt result}
	\end{minipage}
    \hfill
	\begin{minipage}[t]{.45\linewidth}
		\centering
		\includegraphics[width=\linewidth]{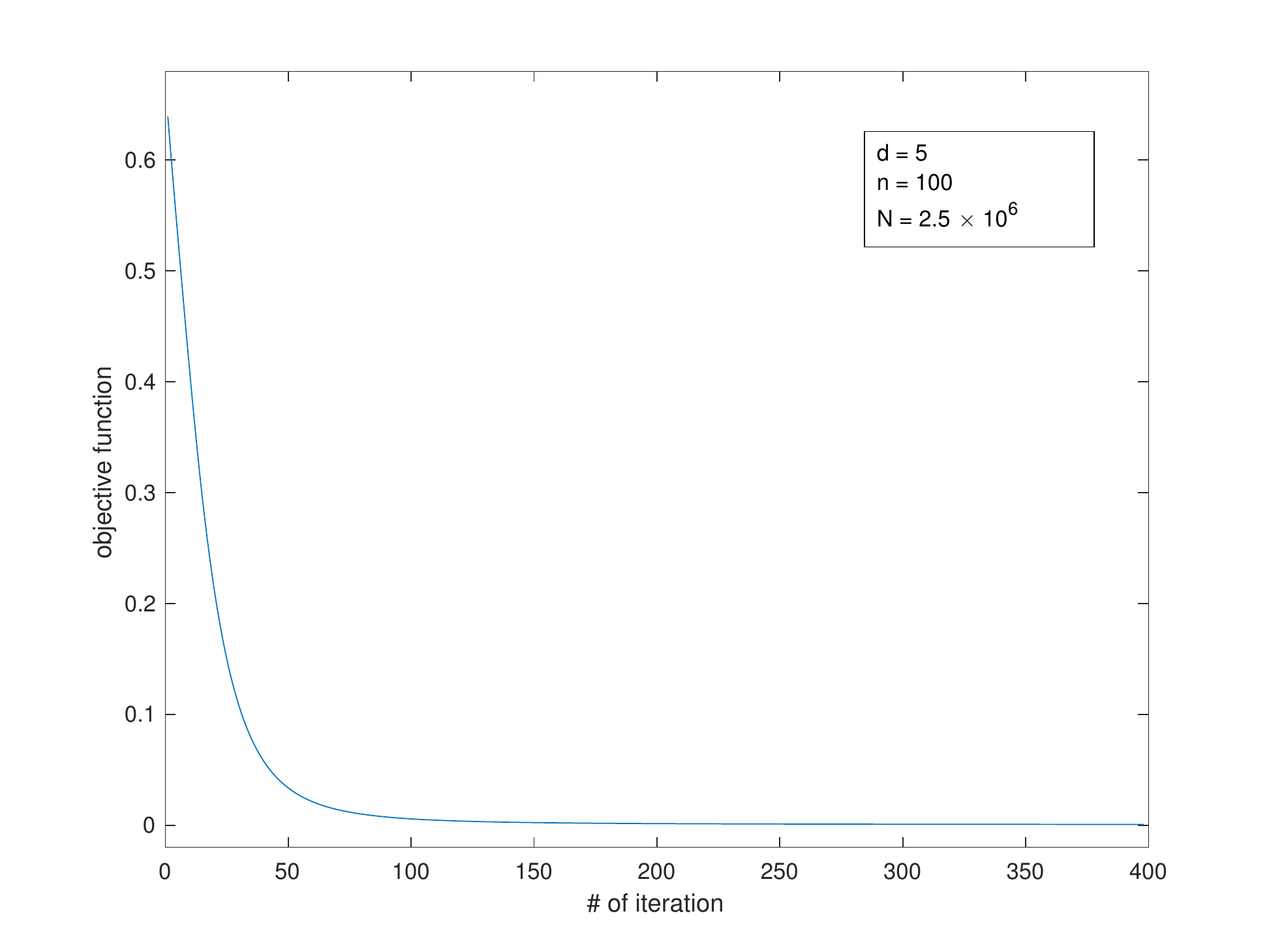}
		\caption{Objective values of \eqref{problem: convex buffered} for the gradient method in Example 3 ($n = 100$)}
		\label{figure: d=5 buff}
	\end{minipage}
\end{figure}

 In the above examples, the oscillations in the paths of objective values in Figure \ref{figure: d=5 Gdim1 opt result} and Figure \ref{figure: d=5 Gdim1 n100 opt result} are largely due to the variations in the estimation of the objective values and the gradients.

\section{Appendix}
\subsection{Proofs of Theorem \ref{t:main} and Theorem \ref{thm:main2}}
{\em Proof of Theorem \ref{t:main}}
The proof is adapted from \cite{dupuis2007subsolutions}.
For $1\in k\in K$, $j=0,\cdots, n-1$ and $y\in \mathbb{R}^m$, define $\rho_{k,j}^n(y)= \rho_{k}(y,j/n)$ and $\bar{a}^n_{k,j}(y)=\bar{a}_{k}(y,j/n)$. Using a property of Radon-Nikodym derivatives, we write the second moment of $Z^n$ in terms of the original variables $X_1,\cdots, X_n$ as
	$$ V^n = \mathbb{E}(Z^n)^2=\mathbb{E}\left[e^{-2nF(Y_n^n)}\prod_{j=0}^{n-1}\left(\sum_{k=1}^{K}\rho_{k,j}^n(Y^n_j)e^{\langle\bar{a}^n_{k,j}(Y^n_j),X_{j+1}\rangle-H_1(\bar{a}^n_{k,j}(Y^n_j))}\right)^{-1}\right],$$
where
\[
Y^n_j  = \frac{1}{n}\sum_{i=1}^j G(X_i),\  j=1, \cdots, n, \; Y^n_0=0.
\]	
Next, letting $B(y)=\bar{W}(y,1)$, we have by assumption that $B(y)\leq 2F(y)$. Define		
$$\tilde{V}^n=\mathbb{E}\left[e^{-nB(Y^n_n)}\prod_{j=0}^{n-1}\exp\left(-\sum_{k=1}^{K}\rho_{k,j}^n(Y^n_j)\bigg(\langle\bar{a}^n_{k,j}(Y^n_j),X_{j+1}\rangle-H_1(\bar{a}^n_{k,j}(Y^n_j))\bigg)\right)\right]$$
and $\tilde{W}^n= -\frac{1}{n}\log \tilde{V}^n$. 	
The fact  $B(Y^n_n)\leq 2F(Y^n_n)$ and the convexity of the exponential function imply that $V^n \le \tilde{V}^n$. Hence, to prove the theorem it suffices to  show $\liminf \tilde{W}^n \geq \bar{W}(0,0)$.\par
	
Recall from the definition of generalized solutions that $\rho_k, r_k$ and $s_k$  are uniformly bounded  which  implies the Lipschitz continuity of $\bar{W}$. By assumption,  $H_1$ is finite everywhere. Since it is convex, it is continuous and bounded on any compact set. Using these properties one can establish the following representation (see \cite[Lemma A.1]{dupuis2007subsolutions})
	\begin{align*}
	\tilde{W}^n&= \inf_{\bar{\nu}^n\in\mathscr{P}(\mathbb{R}^{nh})} \Big \{ \frac{1}{n}R(\bar{\nu}^n\|\eta^{\bigotimes n})+ {\mathbb{E}}\Big[ \frac{1}{n}\sum_{j=0}^{n-1}\sum_{k=1}^{K} \rho^n_{k,j}(\tilde{Y}^n_j)\\
	&\quad \quad \quad \quad \quad \quad\quad\Big[\langle\bar{a}^n_{k,j}(\tilde{Y}^n_j),\tilde{X}^n_{j+1}\rangle-H_1(\bar{a}^n_{k,j}(\tilde{Y}^n_j)) \Big] + B(\tilde{Y}^n_n) \Big]\Big\}
	\end{align*}
where $\eta^{\bigotimes n}$ is the $n$-fold product measure of $\eta$, $(\tilde{X}^n_1,\cdots, \tilde{X}^n_n)$ follows the distribution $\bar{\nu}^n$,  and
\[
\tilde{Y}^n_j  = \frac{1}{n}\sum_{i=1}^j G(\tilde{X}^n_i),\  j=1, \cdots, n, \; \tilde{Y}^n_0=0.
\]	
Using the chain rule for the relative entropy, we can rewrite $\tilde{W}^n$ as
\[
	\tilde{W}^n=\inf_{\bar{\nu}^n \in\mathscr{P}(\mathbb{R}^{nh})}{\mathbb{E}}\left[ \frac{1}{n}\sum_{j=0}^{n-1}\sum_{k=1}^{K} \rho^n_{k,j}(\tilde{Y}^n_j)\left[ R(\nu^n_j \|\eta)+\langle\bar{a}^n_{k,j}(\tilde{Y}^n_j),\tilde{X}^n_{j+1}\rangle-H_1(\bar{a}^n_{k,j}(\tilde{Y}^n_j )) \right] + B(\tilde{Y}^n_n) \right],
\] 	
	where $\nu^n_j$ is  the conditional distribution of $\tilde{X}^n_{j+1}$ given $(\tilde{X}^n_1, \cdots, \tilde{X}^n_j)$ (a random probability measure on $\mathbb{R}^h$). By defining
\begin{equation}\label{q:def_Jn}
	J^n(\bar{\nu}^n)=  {\mathbb{E}}\left[ \frac{1}{n}  \sum_{j=0}^{n-1}\sum^{ K}_{k=1} \rho_{k,j}^n(\tilde{Y}^n_j)\left[R(\nu^n_j\|\eta) -  H_1(\bar{a}_{k,j}^n(\tilde{Y}^n_j))+ \langle\bar{a}_{k,j}^n(\tilde{Y}^n_j),\tilde{X}^n_{j+1}\rangle\right] +B(\tilde{Y}^n_n)\right],
\end{equation}
we have  $\tilde{W}^n=\inf_{\bar{\nu}^n\in\mathscr{P}(\mathbb{R}^{nh})} J^n(\bar{\nu}^n)$. To prove the theorem,
 it suffices to prove
\begin{equation}\label{q:Jn}
\liminf J^n(\bar{\nu}^n) \geq \bar{W}(0,0)
\end{equation}
for an arbitrary sequence $\bar{\nu}^n$ of probability measures on $\mathbb{R}^{nh}$.

To prove \eqref{q:Jn}, we will use a continuous time interpolation.  To this end,
for $j=0,\dots,n-1$ and $t\in [j/n,(j+1)/n)$, define $\tilde{Y}^n(t) = \tilde{Y}^n_j$ and $\nu^n(dx|t)=\nu^n_j(dx)$, and let $\tilde{Y}^n(1)=\tilde{Y}^n_n$.
 Then define a probability measure $\nu^n$ on $\mathbb{R}^h\times [0,1]$ by  $\nu^n(A\times C)=\int_{C}\nu^n(A|t)dt$ for  $A\in \mathcal{B}(\mathbb{R}^h)$ and $C\in \mathcal{B}([0,1])$. In addition, define another probability measure $\eta'$ on  $\mathbb{R}^h\times [0,1]$ as the product measure $\eta'(dx\times dt)=\eta(dx)dt$. Note that $\nu^n$ is a random probability measure on $\mathbb{R}^h\times [0,1]$. The distribution of $\nu^n$ is determined by $\bar{\nu}^n$, a non-random probability measure on $\mathbb{R}^{nh}$. Another application of the chain rule for the relative entropy gives
	\begin{align*}
	{\mathbb{E}}R(\nu^n\|\eta') = {\mathbb{E}}\left[ \frac{1}{n}  \sum_{j=0}^{n-1} R(\nu^n_j\|\eta) \right].
	\end{align*}
We can then write $J^n(\bar{\nu}^n)$ defined in \eqref{q:def_Jn} as
\begin{align*}
	J^n(\bar{\nu}^n)= & {\mathbb{E}}\left[ R(\nu^n\|\eta') -\sum^{ K}_{k=1}\int_{0}^{1} \rho_k(\tilde{Y}^n(t),\floor{tn}/n) H_1\big(\bar{a}_k(\tilde{Y}^n(t),\floor{tn}/n)\big)dt\right.\\
	&\left.+\sum_{k=1}^{K}\int_{\mathbb{R}^h\times[0,1]}\rho_k(\tilde{Y}^n(t),\floor{tn}/n) \big\langle\bar{a}_k(\tilde{Y}^n(t),\floor{nt}/n),x\big\rangle\nu^n(dx\times dt)+B(\tilde{Y}^n(1))\right].
	\end{align*}
We define a time-continuous version of $J^n$ as
	\begin{align*}
	\bar{J}^n(\bar{\nu}^n) = & {\mathbb{E}}\left[ R(\nu^n\|\eta') -\sum^{ K}_{k=1}\int_{0}^{1} \rho_k(\tilde{Y}^n(t),t) H_1\big(\bar{a}_k(\tilde{Y}^n(t),t)\big)dt\right.\\
	&\left.+\sum_{k=1}^{K}\int_{\mathbb{R}^h\times[0,1]}\rho_k(\tilde{Y}^n(t),t) \big\langle\bar{a}_k(\tilde{Y}^n(t),t),x\big \rangle\nu^n(dx\times dt)+B(\tilde{Y}^n(1))\right].
	\end{align*}

We will show
\begin{equation}
\liminf_{n\rightarrow \infty}J^n(\bar{\nu}^n)=\liminf_{n\rightarrow \infty}\bar{J}^n(\bar{\nu}^n) \mbox{ and }
 \liminf_{n\rightarrow \infty}\bar{J}^n(\bar{\nu}^n)\geq \bar{W}(0,0). \label{eq:eq153}
\end{equation} 
The theorem is an immediate consequence of the statements in \eqref{eq:eq153}.
The proofs of these statements rely on the following two lemmas, the proofs of which are omitted since they are analogous to those of Lemma A.2 and Lemma A.3 in \cite{dupuis2007subsolutions}.


\begin{lemma}
		Assume that  $H(a,\alpha) < \infty$ for all $(a,\alpha)\in \mathbb{R}^{h+m}$, and that $(\bar{W},\rho_k,\bar{a}_k)$ is a generalized subsolution/control to \eqref{q:isaccs_H}. Consider a subsequence of $\{\bar{\nu}^n\}$ along which $J^n(\bar{\nu}^n)$ is  bounded.
		Then, relabeling this sequence as $\{n\}$,
\begin{equation}\label{q:uni_integrable_G}				
\lim_{C\rightarrow \infty} \sup_n {\mathbb{E}}\left[ \frac{1}{n} \sum_{j=1}^{n}\|G(\tilde{X}^n_j)\|1_{\{\|G(\tilde{X}^n_j)\|>C\}} \right]=0,
\end{equation}
\begin{equation}\label{q:uni_integrable_X}				
\lim_{C\rightarrow \infty} \sup_n {\mathbb{E}}\left[ \frac{1}{n} \sum_{j=1}^{n}\|\tilde{X}^n_j\|1_{\{\|\tilde{X}^n_j\|>C\}} \right]=0,
\end{equation}
		$\{(\tilde{Y}^n,\nu^n)\}$ is tight, $\{\tilde{Y}^n(1)\}$ is uniformly integrable and $\{\nu^n\}$ satisfies
\begin{equation}\label{q:nu_n_uni_integrable_G}		
\lim_{C\rightarrow \infty} \sup_n {\mathbb{E}} \left[ \int_{\mathbb{R}^h\times [0,1]} \|G(x)\|1_{\{\|G(x)\| \geq C\}}\nu^n(dx\times dt) \right]=0
\end{equation}
and
\begin{equation}\label{q:nu_n_uni_integrable}		
\lim_{C\rightarrow \infty} \sup_n {\mathbb{E}} \left[ \int_{\mathbb{R}^h\times [0,1]} \|x\|1_{\{\|x\| \geq C\}}\nu^n(dx\times dt) \right]=0.
\end{equation}
		\label{lemma A.1}
	\end{lemma}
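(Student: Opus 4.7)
The plan is to adapt the template of \cite[Lemma A.2]{dupuis2007subsolutions}, transferring moment controls from the variables $\tilde X^n_j$ themselves to the summands $G(\tilde X^n_j)$. The argument splits into two steps: (a) derive an a priori bound $\sup_n \mathbb{E}[R(\nu^n\|\eta')] < \infty$ from the hypothesis $\sup_n J^n(\bar\nu^n) < \infty$; (b) use this entropy bound together with the fact that the assumption $H(a,\alpha) < \infty$ on all of $\mathbb{R}^{h+m}$ forces both $\mathbb{E}_\eta[e^{\lambda\|X_1\|}]$ and $\mathbb{E}_\eta[e^{\lambda\|G(X_1)\|}]$ to be finite for every $\lambda > 0$, and deduce the integrability and tightness statements via the Donsker--Varadhan variational formula. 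Step (a) is the main obstacle, since the bounds one would like on the ``non-entropy'' pieces of $J^n$ themselves reappear as multiples of $\mathbb{E}[R(\nu^n\|\eta')]$; one has to arrange that each such cancellation happens with a coefficient strictly less than one.

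For step (a), uniform boundedness of $(r_k,s_k,\rho_k,\bar a_k)$ makes $\bar W$ Lipschitz on $\mathbb{R}^m\times[0,1]$ with some constant $L$, so $B(y)=\bar W(y,1)\ge -|B(0)|-L\|y\|$. For any $\theta>1$, the Donsker--Varadhan inequality applied to the exponent $-\theta\langle\bar a^n_{k,j}(\tilde Y^n_j),\cdot\rangle$ yields
\begin{equation*}
 \int\langle\bar a^n_{k,j}(\tilde Y^n_j),x\rangle\,\nu^n_j(dx) - H_1\bigl(\bar a^n_{k,j}(\tilde Y^n_j)\bigr) \ge -\tfrac{1}{\theta} R(\nu^n_j\|\eta) - \tfrac{1}{\theta} H_1\bigl(-\theta\bar a^n_{k,j}(\tilde Y^n_j)\bigr) - H_1\bigl(\bar a^n_{k,j}(\tilde Y^n_j)\bigr),
\end{equation*}
which after averaging, using the tower property, and the boundedness of $\bar a_k$ together with continuity of $H_1$, gives a lower bound $-\tfrac{1}{\theta}\mathbb{E}[R(\nu^n\|\eta')]-C_\theta$ for the cross term in $J^n$. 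Applied to the exponent $\lambda\|G(\cdot)\|$ the same inequality gives $\mathbb{E}\|\tilde Y^n_n\|\le \lambda^{-1}\bigl(\mathbb{E}[R(\nu^n\|\eta')] + M_\lambda\bigr)$ with $M_\lambda=\log\mathbb{E}_\eta[e^{\lambda\|G(X_1)\|}]<\infty$. Substituting both estimates into the definition of $J^n(\bar\nu^n)$ and choosing $\theta,\lambda$ so that $1-1/\theta-L/\lambda\ge 1/2$ yields $J^n(\bar\nu^n)\ge \tfrac{1}{2}\mathbb{E}[R(\nu^n\|\eta')] - C$, establishing the entropy bound.

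For step (b), applying Donsker--Varadhan with the test function $\varphi(x)=\lambda\|x\|1_{\{\|x\|>C\}}$ gives
\begin{equation*}
 \lambda\,\mathbb{E}\!\int\|x\|1_{\{\|x\|>C\}}\,\nu^n(dx\times dt) \le \mathbb{E}[R(\nu^n\|\eta')] + \log\!\Bigl(\eta(\|x\|\le C) + \int_{\|x\|>C}e^{\lambda\|x\|}\,\eta(dx)\Bigr).
\end{equation*}
The log term tends to $0$ as $C\to\infty$ by dominated convergence with dominant $e^{\lambda\|x\|}\in L^1(\eta)$, so letting $C\to\infty$ and then $\lambda\to\infty$ yields \eqref{q:nu_n_uni_integrable}; the identical argument with $\|G(x)\|$ in place of $\|x\|$ yields \eqref{q:nu_n_uni_integrable_G}. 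The identity $\mathbb{E}[\tfrac{1}{n}\sum_{j=1}^n\|\tilde X^n_j\|1_{\{\|\tilde X^n_j\|>C\}}] = \mathbb{E}\!\int\|x\|1_{\{\|x\|>C\}}\,\nu^n(dx\times dt)$ (from the tower property and the definition of $\nu^n_j$), and its $G$-analogue, then give \eqref{q:uni_integrable_X} and \eqref{q:uni_integrable_G}.

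Finally, tightness of the random measures $\{\nu^n\}$ on $\mathbb{R}^h\times[0,1]$ is a direct consequence of the uniform moment bound $\sup_n\mathbb{E}\!\int\|x\|\,d\nu^n<\infty$ via Markov's inequality; tightness of $\{\tilde Y^n\}$ in the Skorohod topology follows from Aldous's criterion, using $\mathbb{E}\|\tilde Y^n(t)-\tilde Y^n(s)\|\le (t-s+1/n)\sup_n\mathbb{E}\!\int\|G(x)\|\,d\nu^n$ and the uniform integrability \eqref{q:uni_integrable_G}. Uniform integrability of $\{\tilde Y^n(1)\}$ is immediate from the pointwise domination $\|\tilde Y^n(1)\|\le Z_n \doteq n^{-1}\sum_j\|G(\tilde X^n_j)\|$ together with \eqref{q:uni_integrable_G}, since $\mathbb{E}[\|\tilde Y^n(1)\|1_{\{\|\tilde Y^n(1)\|>K\}}]\le \mathbb{E}[Z_n1_{\{Z_n>K\}}]$.
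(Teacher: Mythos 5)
Your proposal is correct and follows essentially the route the paper intends: the paper omits the proof, deferring to Lemma A.2 of Dupuis--Wang (2007), and your argument is exactly that relative-entropy/Donsker--Varadhan template, adapted so that the everywhere-finiteness of the joint log-moment generating function $H$ controls both $\|\tilde X^n_j\|$ and $\|G(\tilde X^n_j)\|$ simultaneously; the a priori bound $\sup_n \mathbb{E}[R(\nu^n\|\eta')]<\infty$ via the $1-1/\theta-L/\lambda$ bookkeeping and the subsequent variational bounds with test functions $\lambda\|x\|1_{\{\|x\|>C\}}$ are all sound. One small misstatement: the displayed increment bound $\mathbb{E}\|\tilde Y^n(t)-\tilde Y^n(s)\|\le (t-s+1/n)\sup_n\mathbb{E}\int\|G(x)\|\,d\nu^n$ is not valid as written, since only the \emph{average} over $j$ of $\mathbb{E}\|G(\tilde X^n_j)\|$ is controlled, not the individual terms appearing in a short time window; the correct (and standard) estimate is $\mathbb{E}\|\tilde Y^n(t)-\tilde Y^n(s)\|\le C(t-s+1/n)+\sup_n\mathbb{E}\bigl[\tfrac1n\sum_j\|G(\tilde X^n_j)\|1_{\{\|G(\tilde X^n_j)\|>C\}}\bigr]$, which is exactly what your uniform integrability statement \eqref{q:uni_integrable_G} supplies, so the tightness conclusion stands after this one-line repair.
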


	\begin{lemma}
		Assume that  $H(a,\alpha) < \infty$ for all $(a,\alpha)\in \mathbb{R}^{h+m}$, and that $(\bar{W},\rho_k,\bar{a}_k)$ is a generalized subsolution/control to \eqref{q:isaccs_H}. Let $\{\bar{\nu}^n\}$ be a subsequence as in Lemma \ref{lemma A.1} and
		suppose that $(\tilde{Y}^n,\nu^n)\rightarrow(\tilde{Y},\nu)$ in distribution. Then $\nu(dx\times dt)$ can be factored as $\nu(dx\times dt)=\nu(dx|t)dt$, with
\begin{equation}\label{q:tilde_Y}
\tilde{Y}(t)=\int_{[0,t]}\int_{\mathbb{R}^h}G(x)\nu(dx|s)ds, \mbox{ for all } t \in [0,1], \mbox{ a.s. }
\end{equation}
		\label{lemma A.2}
	\end{lemma}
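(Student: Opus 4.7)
My plan has three pieces: (i) show that $\nu$ has Lebesgue measure as its marginal on $[0,1]$, which yields the disintegration $\nu(dx\times dt)=\nu(dx|t)\,dt$; (ii) introduce the compensated process
$$A^n(t):=\int_{[0,t]}\!\int_{\mathbb{R}^h}G(x)\,\nu^n(dx|s)\,ds$$
and use a truncated martingale bound to show $\tilde Y^n(t)-A^n(t)\to 0$ in probability; (iii) pass to the limit in $A^n(t)$ and identify $\tilde Y(t)$ with the claimed integral. For (i), observe that by construction $\nu^n(\mathbb{R}^h\times C)=\int_C\nu^n(\mathbb{R}^h|t)\,dt=|C|$ for every $C\in\mathcal{B}([0,1])$, since each $\nu^n_j$ is a probability measure on $\mathbb{R}^h$. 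Weak convergence preserves marginals, so the same holds for $\nu$; disintegration on the Polish product space then produces a measurable family $\{\nu(\cdot|t)\}_{t\in[0,1]}$ with $\nu(dx\times dt)=\nu(dx|t)\,dt$.

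For (ii), let $\mathcal{F}^n_j=\sigma(\tilde X^n_1,\ldots,\tilde X^n_j)$. Since $\nu^n_j$ is a regular conditional distribution of $\tilde X^n_{j+1}$ given $\mathcal{F}^n_j$, $\mathbb{E}[G(\tilde X^n_{j+1})\mid\mathcal{F}^n_j]=\int G\,d\nu^n_j$, so $M^n_j:=\tilde Y^n_j-A^n(j/n)$ is an $\mathcal{F}^n_j$-martingale (componentwise). To deal with the unboundedness of $G$, for $C>0$ set $G_C(x):=G(x)\mathbf{1}_{\{|G(x)|\le C\}}$ and split $M^n_j=M^{n,C}_j+R^{n,C}_j$, with $M^{n,C}_j$ built from $G_C$. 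The bounded piece has orthogonal increments of magnitude at most $2C/n$, giving $\mathbb{E}|M^{n,C}_j|^2\le 4C^2/n$, while
$$\mathbb{E}|R^{n,C}_j|\le\frac{2}{n}\sum_{i=1}^{n}\mathbb{E}\bigl[|G(\tilde X^n_i)|\mathbf{1}_{\{|G(\tilde X^n_i)|>C\}}\bigr],$$
which by \eqref{q:uni_integrable_G} vanishes as $C\to\infty$ uniformly in $n$. Sending $n\to\infty$ then $C\to\infty$ yields $\tilde Y^n(t)-A^n(t)\to 0$ in probability for each $t\in[0,1]$.

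For (iii), invoke the Skorohod representation theorem along the convergent subsequence to realize $(\tilde Y^n,\nu^n)\to(\tilde Y,\nu)$ almost surely. For each fixed $t\in[0,1]$,
$$A^n(t)=\int_{\mathbb{R}^h\times[0,1]}G(x)\,\mathbf{1}_{[0,\lfloor nt\rfloor/n]}(s)\,\nu^n(dx\times ds).$$
The indicator converges to $\mathbf{1}_{[0,t]}$ off the $\nu$-null set $\mathbb{R}^h\times\{t\}$ (null because the Lebesgue marginal has no atoms) and $G$ is continuous, so combining the weak convergence $\nu^n\to\nu$ with the uniform integrability of $G$ under $\nu^n$ furnished by \eqref{q:nu_n_uni_integrable_G}, via a standard truncate-and-approximate argument (pass to the weak limit for the bounded truncation $G_C$, then send $C\to\infty$ using the uniform-integrability tail bound), I obtain $A^n(t)\to\int_{[0,t]}\int G(x)\,\nu(dx|s)\,ds$ a.s. Together with the martingale vanishing from (ii), this identifies $\tilde Y(t)$ with the claimed right-hand side a.s.\ for each fixed $t$. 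Since the right-hand side is absolutely continuous in $t$, applying the identity along a countable dense subset of $[0,1]$ yields a common full-probability set on which the identification holds for all rational $t$; right-continuity of $\tilde Y$ (inherited from the path-space convergence) then extends the identification to all $t\in[0,1]$ simultaneously a.s.

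The main obstacle is the interplay between the $L^1$-style uniform integrability of $G$ provided by Lemma \ref{lemma A.1} and the $L^2$ control needed for the martingale bound, which is handled cleanly by the $G_C$ truncation split. A secondary technical point is upgrading the fixed-$t$ identification to an a.s.\ identification simultaneous over all $t\in[0,1]$; this follows from the dense-set argument together with absolute continuity of the limiting integral and right-continuity of $\tilde Y$.
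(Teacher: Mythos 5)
Your proof is correct, and it is essentially the canonical argument: the paper itself omits the proof of Lemma \ref{lemma A.2}, deferring to the analogous Lemma A.3 of Dupuis and Wang (2007), and that proof proceeds exactly as you do --- Lebesgue time marginal and disintegration, the martingale decomposition $\tilde Y^n_j-A^n(j/n)$ killed via a truncation of $G$ using the uniform integrability \eqref{q:uni_integrable_G}, and identification of the limit of $A^n(t)$ through weak convergence plus \eqref{q:nu_n_uni_integrable_G}. The only points you gloss over are routine (the $O(1/n)$ gap between $A^n(t)$ and $A^n(\lfloor nt\rfloor/n)$, choosing truncation levels $C$ with $\nu(\{\|G\|=C\})=0$, and that the tail bound gives convergence of $A^n(t)$ in probability rather than a.s., which still suffices to identify the limit), so no substantive gap remains.
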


With these two lemmas we can now complete the proof of \eqref{eq:eq153}. Without loss of generality  we can assume that $J^n(\bar{\nu}^n)$ is bounded.
The uniform boundedness and Lipschitz continuity of $\rho_k$ and $\bar{a}_k$, the continuity of $H_1$ and the uniform integrability of $\nu^n$ in \eqref{q:nu_n_uni_integrable} imply $\lim_{n\rightarrow \infty} |J^n(\bar{\nu}^n) - \bar{J}^n(\bar{\nu}^n)| = 0$. In the remainder of the proof we show $\liminf_{n\rightarrow \infty}\bar{J}^n(\bar{\nu}^n)\geq \bar{W}(0,0)$ along any such sequence.

Since $\{(\tilde{Y}^n,\nu^n)\}$ is tight along such a subsequence (Lemma \ref{lemma A.1}), by passing to a further subsequence if necessary we may assume that $(\tilde{Y}^n,\nu^n)\rightarrow(\tilde{Y},\nu)$ in distribution. Below we consider the limit of each term of $\bar{J}^n(\bar{\nu}^n)$. For its first term, note that
\begin{equation}\label{q:rela_ent}
	\liminf_{n\rightarrow \infty} {\mathbb{E}} \left[ R(\nu^n\|\eta') \right]  \geq {\mathbb{E}}\left[ \liminf_{n\rightarrow\infty} R(\nu^n\|\eta') \right] \geq {\mathbb{E}}\left[ R(\nu\|\eta') \right]
\end{equation}
where the first inequality is by Fatou's Lemma and the second follows from the lower semi-continuity of the relative entropy.
For the second term in $\bar{J}^n(\bar{\nu}^n)$, using the continuity and boundedness of $\rho_k$ and $\bar{a}_k$, and the weak convergence of $\tilde{Y}^n$ to $\tilde{Y}$, an application of the dominated convergence theorem gives
\begin{equation}\label{q:rho_kH1}
	\lim_{n\rightarrow \infty} {\mathbb{E}} \left[ \sum^{ K}_{k=1}\int_{0}^{1} \rho_k(\tilde{Y}^n(t),t) H_1(\bar{a}_k(\tilde{Y}^n(t),t))dt \right] = {\mathbb{E}} \left[ \sum^{ K}_{k=1}\int_{0}^{1} \rho_k(\tilde{Y}(t),t) H_1(\bar{a}_k(\tilde{Y}(t),t))dt \right].
\end{equation}
For the third term, 
the uniform integrability of $\nu^n$ and continuity and boundedness of  $\rho_k$ and $\bar{a}_k$ implies
\begin{equation}\label{q:rho_knun}
\begin{split}
	&\lim_{n\rightarrow \infty} {\mathbb{E}} \left[ \sum_{k=1}^{K}\int_{\mathbb{R}^h\times[0,1]}\rho_k(\tilde{Y}^n(t),t) \langle\bar{a}_k(\tilde{Y}^n(t),t),x\rangle\nu^n(dx\times dt) \right] \\
	& = {\mathbb{E}} \left[ \sum_{k=1}^{K}\int_{\mathbb{R}^h\times[0,1]}\rho_k(\tilde{Y}(t),t) \langle\bar{a}_k(\tilde{Y}(t),t),x\rangle\nu(dx\times dt) \right].
\end{split}
\end{equation}
For the last term, note that the Lipschitz continuity of $\bar{W}$ implies $B(y)=\bar{W}(y,1)$ has linear growth.
%
From the uniform integrability of $\{\tilde{Y}^n(1)\}$ in Lemma \ref{lemma A.1} we then have that
\begin{equation}\label{q:BtildeY}
\lim_{n\rightarrow\infty} {\mathbb{E}}[B(\tilde{Y}^n(1))]={\mathbb{E}}[B(\tilde{Y}(1))].
\end{equation}
Combining \eqref{q:rela_ent}, \eqref{q:rho_kH1}, \eqref{q:rho_knun} and \eqref{q:BtildeY}, we obtain the following lower bound for $\liminf_{n\rightarrow \infty} \bar{J}^n(\bar{\nu}^n)$:
	\begin{align}
	&{\mathbb{E}}\left[ R(\nu\|\eta') -   \sum^{ K}_{k=1}\int_{0}^{1} \rho_k(\tilde{Y}(t),t) H_1(\bar{a}_k(\tilde{Y}(t),t))dt \right. \nonumber \\
	& \left.+ \sum_{k=1}^{K}\int_{\mathbb{R}^h\times[0,1]}\rho_k(\tilde{Y}(t),t) \big\langle\bar{a}_k(\tilde{Y}(t),t),x\big \rangle\nu(dx\times dt) + B(\tilde{Y}(1)) \right].
	\label{lower bound}
	\end{align}

Next, using the chain rule of the relative entropy and the representation \eqref{L representation}, we have
	\begin{align*}
	R(\nu\|\eta') & =\int_{0}^{1}R(\nu(\cdot|t)\|\eta)dt \geq \int_{0}^{1} L(b(t), \beta(t))dt,
	\end{align*}
where $b(t)=\int_{\mathbb{R}^h}x\nu(dx|t)$ and $\beta(t)=\int_{\mathbb{R}^h}G(x)\nu(dx|t)$. From the definition of $b(t)$ 	$$\int_{\mathbb{R}^h\times[0,1]}\big\langle\bar{a}_k(\tilde{Y}(t),t),x\big \rangle\nu(dx\times dt)=\int_{[0,1]}\langle\bar{a}_k(\tilde{Y}(t),t),b(t)\rangle dt.$$	This gives the following lower bound for (\ref{lower bound}):
	\begin{align}
	{\mathbb{E}}\left[ \int_{0}^{1} \sum_{k=1}^{K} \rho_k(\tilde{Y}(t),t) \left[ L(b(t),\beta(t))-H_1(\bar{a}_k(\tilde{Y}(t),t))+\langle\bar{a}_k(\tilde{Y}(t),t),b(t)\rangle \right]dt +B(\tilde{Y}(1)) \right].
	\label{lower lower bound}
	\end{align}
	By the definition of generalized solutions (see \eqref{q:def_gen_sub_H}),
	\begin{align*}
	&\bar{W}_t(\tilde{Y}(t),t)+\langle D\bar{W}(\tilde{Y}(t),t),\beta(t)\rangle\\
	=&\sum^{K}_{k=1} \rho_k(\tilde{Y}(t),t)\left[ r_k(\tilde{Y}(t),t)+\big\langle s_k(\tilde{Y}(t),t),\beta(t)\big\rangle \right]\\
	\geq& -\sum^{K}_{k=1} \rho_k(\tilde{Y}(t),t)\left[ L(b(t),\beta(t))-H_1(\bar{a}_k(\tilde{Y}(t),t))+\big\langle\bar{a}_k(\tilde{Y}(t),t),b(t)\big\rangle \right].
	\end{align*}
From \eqref{q:tilde_Y} we have $\beta(t)=d\tilde{Y}(t)/dt$ for almost every $t$. Integrating over $[0,1]$ and taking expectations, we get	
	\begin{align*}
	&\bar{W}(0,0)-\mathbb{E}\bar{W}(\tilde{Y}(1),1)\\
	\leq &\mathbb{E}\left[ \int_{0}^{1} \sum^{K}_{k=1} \rho_k(\tilde{Y}(t),t)\left[ L(b(t), \beta(t))-H_1(\bar{a}_k(\tilde{Y}(t),t))+\langle\bar{a}_k(\tilde{Y}(t),t),b(t)\rangle \right]dt \right].
	\end{align*}
Since $B(\tilde{Y}(1)=\bar{W}(\tilde{Y}(1),1)$, we have shown that $\bar{W}(0,0)$ is a lower bound of (\ref{lower lower bound}) and thereby completed the proof of Theorem \ref{t:main}.
\hfill \qed \\  \ \\

{\em Proof of Theorem \ref{thm:main2}}
The unbiasedness of $Z^n(\lambda)$ is easy to check. Consider now
 $V^n(\lambda)\doteq \mathbb{E}(Z^n(\lambda))^2$. Let $m\ge 1$.
Then with
$$\Upsilon^n\doteq \prod_{j=0}^{n-1}\left[\sum_{k=1}^{K}\rho_k({Y}^n_j,j/n)e^{\langle \bar{a}_k({Y}^n_j,j/n),{X}^n_{j+1} \rangle - H_1(\bar{a}_k({Y}^n_j,j/n))}\right]^{-1},$$
we have
\begin{align}
	V^n(\lambda) &= \mathbb{E}\left(([\lambda (Y_n-c) + 1]^+)^2 \Upsilon^n\right)\nonumber\\
	&= \mathbb{E}\left(([\lambda (Y_n-c) + 1]^+)^2 \Upsilon^n 1_{\{Y_n \ge c-1/\lambda\}}\right)\nonumber\\
	&= \mathbb{E}\left(([\lambda (Y_n-c) + 1]^+)^2 \Upsilon^n 1_{\{ c-1/\lambda \le Y_n \le c+m\}}\right)
	+ \mathbb{E}\left(([\lambda (Y_n-c) + 1]^+)^2 \Upsilon^n 1_{\{ Y_n > c+m\}}\right).\label{eq:eq1207}
\end{align}
For the second term on the last line, we have by the Cauchy-Schwarz inequality
\begin{align*}
	\mathbb{E}\left(([\lambda (Y_n-c) + 1]^+)^2 \Upsilon^n 1_{\{ Y_n > c+m\}}\right) \le \left[\mathbb{E}\left(([\lambda (Y_n-c) + 1]^+)^4  1_{\{ Y_n > c+m\}}\right)\right]^{1/2}
	\left[\mathbb{E}\left(\Upsilon^n\right)^2\right]^{1/2}.
\end{align*}
By Jensen's inequality
$$0 \le \Upsilon^n \le \tilde \Upsilon^n \doteq  \prod_{j=0}^{n-1} \exp\left\{\sum_{k=1}^{K}\rho_k({Y}^n_j,j/n) (\langle \bar{a}_k({Y}^n_j,j/n),{X}^n_{j+1} \rangle - H_1(\bar{a}_k({Y}^n_j,j/n)))\right\}.$$
From this, the boundedness of $\rho_k$ and $\bar a_k$, and our assumption on the finiteness of $H$, we have that for some $c_1 <\infty$
$$	\left[\mathbb{E}\left(\Upsilon^n\right)^2\right]^{1/2} \le e^{nc_1} \mbox{ for all } n \ge 1.$$
Also, for some $c_2<\infty$
\begin{align*}
\mathbb{E}\left(([\lambda (Y_n-c) + 1]^+)^4  1_{\{ Y_n > c+m\}}\right)	&\le c_2 \mathbb{E}\left( \left(1 +\frac{(\lambda (Y_n-c))^4}{4!}\right)  1_{\{ Y_n > c+m\}}\right)\\
&\le c_2\mathbb{E}\left(e^{\lambda (Y_n-c)}e^{n\gamma_n(Y_n-c-m)}\right),
\end{align*}
where $\gamma_n$ is as introduced above \eqref{eq:eq417}.  The same calculation as in \eqref{eq:eq417} now shows that
$$\frac{1}{n}\log\left[\mathbb{E}\left(([\lambda (Y_n-c) + 1]^+)^4  1_{\{ Y_n > c+m\}}\right)\right]^{1/2} \le -\frac{L(c+m)}{2} + \frac{\lambda m + \log c_2}{2n}.$$
Thus
$$\frac{1}{n}\log\mathbb{E}\left(([\lambda (Y_n-c) + 1]^+)^2 \Upsilon^n 1_{\{ Y_n > c+m\}}\right) \le -\frac{L(c+m)}{2} + c_1 + \frac{\lambda m + \log c_2}{2n}.$$
Now fix $m\ge 1$ such that $L(c+m)/2 \ge \bar W(0,0) +1 + c_1$.

Now consider the first term on the right side of \eqref{eq:eq1207}. We have
$$
\mathbb{E}\left(([\lambda (Y_n-c) + 1]^+)^2 \Upsilon^n 1_{\{ c-1/\lambda \le Y_n \le c+m\}}\right) \le (\lambda m+1)^2  \mathbb{E}\left(\tilde \Upsilon^n 1_{\{Y_n\ge c-1/\lambda\}}\right).$$
Choose $\gamma$ large enough so that $\bar W(y,1) \le 0$ for $y \ge c-1/\gamma$. Then with $B$ as in the proof of Theorem \ref{t:main}
we have $1_{\{Y_n\ge c-1/\lambda\}} \le e^{-nB(Y_n)}$ for $\lambda \ge \gamma$. Thus we have
$$\frac{1}{n}\log\mathbb{E}\left(([\lambda (Y_n-c) + 1]^+)^2 \Upsilon^n 1_{\{ c-1/\lambda \le Y_n \le c+m\}}\right) \le
\frac{2\log(\lambda m+1)}{n} + \frac{1}{n} \log \tilde V^n$$
where $\tilde V^n$ is as in the proof of Theorem \ref{t:main}. Choose $n_1 \in \mathbb{N}$ such that $\frac{(\lambda m + \log c_2)}{2n_1}<1$.
Thus for all $\lambda \ge \gamma$ and $n \ge n_1$
$$\frac{1}{n}\log V^n(\lambda) \le \frac{\log 2}{n} + \max \left\{ \frac{2\log(\lambda m+1)}{n} + \frac{1}{n} \log \tilde V^n , - \bar W(0,0)\right\}.$$

Taking limit as $n\to \infty$, we now have from the proof of Theorem \ref{t:main} that for all $\lambda \ge \gamma$
$\limsup_{n\to \infty}\frac{1}{n}\log V^n(\lambda) \le - \bar W(0,0)$. The result follows.
\hfill \qed

{\bf Acknowledgements.} Research of AB and SL were supported in part by the National Science
Foundation (DMS- 1814894).

\bibliographystyle{amsplain}

\providecommand{\bysame}{\leavevmode\hbox to3em{\hrulefill}\thinspace}
\providecommand{\MR}{\relax\ifhmode\unskip\space\fi MR }
\providecommand{\MRhref}[2]{%
  \href{http://www.ams.org/mathscinet-getitem?mr=#1}{#2}
}
\providecommand{\href}[2]{#2}

\vspace{\baselineskip}

\textsc{\kern-1.4em A. Budhiraja (email:
budhiraj@email.unc.edu) \newline
S. Lu (email: shulu@email.unc.edu) \newline
Y. Yu (email: yy0324@live.unc.edu)\newline
Q. Tran-Dinh (email: quoctd@email.unc.edu) \newline\newline
Department of Statistics and
Operations Research\newline University of North Carolina\newline Chapel Hill,
NC 27599, USA}

\end{document}